\theoremstyle{plain}
\newtheorem{theorem}{Theorem}[section]
\newtheorem{lemma}[theorem]{Lemma}
\newtheorem{prop}[theorem]{Proposition}
\newtheorem{coro}[theorem]{Corollary}
\newtheorem{question}{Question}
\newtheorem{conjecture}{Conjecture}
\theoremstyle{definition}
\newtheorem{defi}[theorem]{Definition}
\newtheorem{rema}[theorem]{Remark}
\newtheorem{example}[theorem]{Example}
\newcommand{\bR}{\mathbb{R}}
\newcommand{\bC}{\mathbb{C}}
\newcommand{\bQ}{\mathbb{Q}}
\newcommand{\bN}{\mathbb{N}}
\newcommand{\bZ}{\mathbb{Z}}
\newcommand{\bP}{\mathbb{P}}
\newcommand{\bK}{\mathbb{K}}
\newcommand{\cO}{\mathcal{O}}
\DeclareMathOperator{\Spec}{\mathrm{Spec}}
\DeclareMathOperator{\Rat}{\mathrm{Rat}}
\DeclareMathOperator{\ord}{\mathrm{ord}}
\DeclareMathOperator{\Div}{\mathrm{Div}}
\DeclareMathOperator{\divi}{\mathrm{div}}
\DeclareMathOperator{\Hom}{\mathrm{Hom}}
\DeclareMathOperator{\vol}{\mathrm{vol}}
\DeclareMathOperator{\Supp}{\mathrm{Supp}}
\DeclareMathOperator{\pam}{\widehat{\mu}^{\mathrm{asy}}_\mathrm{max}}
\DeclareMathOperator{\pmax}{\widehat{\mu}_\mathrm{max}}
\DeclareMathOperator{\pim}{\widehat{\mu}^{\mathrm{asy}}_\mathrm{min}}
\DeclareMathOperator{\pmin}{\widehat{\mu}_\mathrm{min}}
\DeclareMathOperator{\rk}{\mathrm{rk}}
\DeclareMathOperator{\Vect}{\mathrm{Vect}}
\DeclareMathOperator{\Essmin}{\zeta_{\mathrm{ess}}}
\DeclareMathOperator{\Essabs}{\zeta_{\mathrm{abs}}}
\DeclareMathOperator{\GL}{\mathrm{GL}}
\author{François Ballaÿ}
\title{Successive minima and asymptotic slopes in Arakelov Geometry}
\begin{document}

\maketitle

\begin{center}
\begin{small}
Beijing International Center for Mathematical Research, Peking University\\
5 Yi He Yuan Road, Beijing 100871, China\\
\url{francois.ballay@bicmr.pku.edu.cn}\\
\url{http://bicmr.pku.edu.cn/~ballay}
\end{small}
\end{center}

\vspace*{1cm}

\subsection*{Abstract} Let $X$ be a normal and geometrically integral projective variety over a global field $K$ and let $\overline{D}$ be an adelic Cartier divisor on $X$. We prove a conjecture of Chen, showing that the essential minimum $\zeta_{\mathrm{ess}}(\overline{D})$ of $\overline{D}$ equals its asymptotic maximal slope under mild positivity assumptions. As an application, we see that $\zeta_{\mathrm{ess}}(\overline{D})$ can be read on the Okounkov body of the underlying divisor $D$ via the Boucksom--Chen concave transform. This gives a new interpretation of Zhang's inequalities on successive minima and a criterion for equality generalizing to arbitrary projective varieties  a result of Burgos Gil, Philippon and Sombra concerning toric metrized divisors on toric varieties. When applied to a projective space $X = \bP_K^d$, our main result has several applications  to the study of successive minima of hermitian vector spaces. We obtain an absolute transference theorem with a linear  upper bound,  answering a question raised by Gaudron. We also give new comparisons between successive slopes  and absolute minima, extending results of Gaudron and Rémond.

\begin{flushleft}
\textbf{Keywords~:} Height, essential minimum, successive minima, adelic line bundles and divisors, Okounkov bodies, hermitian vector bundles, transference theorems.

\textbf{MSC Class~:} 14G40 (Primary) 11G50; 11H06 (Secondary) 
\end{flushleft}

\vspace*{2mm}

\subsection*{Acknowledgements} I am very grateful to Huayi Chen for many useful discussions and for communicating to me his article \cite{Cheniccm}. I am also indebted to José Ignacio Burgos Gil for drawing my attention to the potential links between the essential minimum and the concave transform function.  I warmly thank Pascal Autissier, Eric Gaudron and Gaël Rémond for their careful reading of the text and their very helpful remarks. Finally, I thank the organizers of the 2019 Intercity Seminar on Arakelov Geometry in Kyoto for giving me the opportunity to present this work.

\newpage

\section{Introduction}

 Let  $K$ be either a number field or $K = k(C_K)$ the field of functions of a regular projective curve $C_K$ defined over an arbitrary field $k$. Equivalently, $K$ is a finite extension of $K_0$, where $K_0$ denotes either $\bQ$ or $k(T)$. We let $\Sigma_K$ be the set of places of $K$ and we fix an algebraic closure $\overline{K}$ of $K$. Let $\pi \colon X \rightarrow \Spec K$ be a geometrically integral and normal projective variety on $\Spec K$ and let $d= \dim X$. We consider an adelic $\bR$-Cartier divisor $\overline{D}$  on $X$ with continuous metrics, and we denote by $h_{\overline{D}} \colon X(\overline{K}) \rightarrow \bR$ the height function associated to $\overline{D}$  (see section \ref{sectionadelicRdivisors} for details). The notion of adelic $\bR$-divisors is due to Moriwaki \cite{MoriwakiMAMS}, and  generalizes the one of adelic line bundles in the sense of Zhang \cite{Zhangadelic}.   The essential minimum of $\overline{D}$ is defined by
\[\Essmin(\overline{D}) = \sup_{Y \subsetneq X} \inf_{x \in X(\overline{K}) \setminus Y} h_{\overline{D}}(x),\]
where the supremum is taken over the Zariski-closed proper subschemes $Y \subsetneq X$. This invariant is of significant  importance in Diophantine geometry. The celebrated Bogomolov conjecture can be stated in terms of lower bounds for $\Essmin(\overline{D})$, which are required to be explicit in the effective version of the conjecture. Such bounds have been studied extensively and have applications to other problems in Diophantine geometry and Arakelov geometry. A striking example is given by Zhang's inequalities on successive minima \cite[Theorem 5.2]{Zhangplav} (see Theorem \ref{thmZhangintro} below),  which also play an important role in equidistribution problems. Indeed, classical equidistribution theorems (including the ones of Szpiro--Ullmo--Zhang \cite{SUZ}, Favre--Rivera-Letelier \cite{FavreRivera}, Chambert-Loir \cite{ChL}, Baker--Rumely \cite{BakerRumely},  Yuan \cite{Yuanbig} and Berman--Boucksom \cite{BermanBoucksom}) are applicable only when equality holds in Zhang's theorem (see \cite{BPRS},\cite{BPSmin} and subsection \ref{paragZhangintro} below). The invariant $\Essmin(\overline{D})$ is also related to important problems in geometry of numbers. Indeed, when $X = \bP^d_K$ is a projective space the essential minimum encodes crucial arithmetic information of euclidean lattices, or more generally hermitian $K$-vector spaces (see \cite{GR13, GRSiegel} and subsection \ref{paragGNintro} of this paper). 

In \cite{BPSmin}, Burgos Gil, Philippon and Sombra proposed an innovative study of the essential minimum for toric metrized divisors on toric varieties based on convex analysis, and showed that in this case  $\Essmin(\overline{D})$ coincides with the maximum of a concave function defined on the (geometric) Okounkov body of $D$. In a joint work with Rivera-Letelier \cite{BPRS}, they applied this result to investigate new situations in which equidistribution phenomena occur (in the toric setting).  The results of \cite{BPSmin} and \cite{BPRS} give a completely new understanding of the essential minimum when $\overline{D}$ is a toric metrized divisor. However, the toric assumption on the metrics is rather restrictive (even in the case $X = \bP_K^d$), and confers a very specific behaviour to the essential minimum. To give a better understanding of the invariant $\Essmin(\overline{D})$ in the general case is an important topic of research, in which many interesting questions remain unanswered.

In \cite{Chenpolygone}, Chen introduced an invariant measuring the asymptotic minimal size of global sections of a hermitian line bundle. For any integer $n \geq 1$, the $K$-vector space $V_n:= H^0(X,nD)$ is equipped with a supremum norm $\|.\|_{v,\sup}$ for each place $v$ of $K$. Let $v_0 \in \Sigma_{K_0}$ be any place of $K_0$. We denote by $ \lambda_{\max,n}(\overline{D})$ the supremum of the real numbers $t$ for which there exists a non-zero function $\phi \in V_n$ such that for every place $v \in \Sigma_K$, 
 \[\|\phi\|_{v,\sup} \leq \left\{\begin{tabular}{ll}
$e^{-t}$ & if $ v | v_0$,\\
$1$ & otherwise.
 \end{tabular}\right.\] 
When $D$ is big, the sequence $(\lambda_{\max, n}(\overline{D})/n)_{n \in \bN}$ converges in $\bR$. Following \cite{Chenpolygone}, we call its limit the \textit{asymptotic maximal slope} of $\overline{D}$ and denote it by $\pam(\overline{D})$. The following conjecture of Chen relates this invariant to the essential minimum.
\begin{conjecture}[Chen]\label{conjChen} Assume that $\overline{D}$ is semi-positive and that the underlying divisor $D$ is big. Then $\Essmin(\overline{D}) =  \pam(\overline{D})$.
\end{conjecture}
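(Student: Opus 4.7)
The easy inequality $\pam(\overline{D}) \leq \Essmin(\overline{D})$ follows from the product formula. Given a non-zero $\phi \in V_n = H^0(X, nD)$ with $\|\phi\|_{v,\sup} \leq 1$ for $v \nmid v_0$ and $\|\phi\|_{v_0, \sup} \leq e^{-t}$, evaluating at a closed point $x \in X(\overline{K}) \setminus \divi(\phi)$ and summing the estimates $\|\phi(x)\|_w \leq \|\phi\|_{w, \sup}$ over the places $w$ of $K(x)$ yields $n\, h_{\overline{D}}(x) \geq t$. Taking $Y = \divi(\phi)$ in the definition of the essential minimum gives $\Essmin(\overline{D}) \geq t/n$, and letting $t \to \lambda_{\max,n}(\overline{D})$ followed by $n \to \infty$ produces $\Essmin(\overline{D}) \geq \pam(\overline{D})$.

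The reverse inequality $\Essmin(\overline{D}) \leq \pam(\overline{D})$ is the substance of the conjecture. Since twisting the metric of $\overline{D}$ at $v_0$ by a real constant $a$ shifts both invariants by $a$ and preserves semi-positivity, it suffices to prove that any semi-positive adelic $\overline{D}$ with $D$ big and $\Essmin(\overline{D}) > 0$ is arithmetically big; this would yield the conclusion via Chen's characterization $\pam(\overline{L}) = \sup\{a \in \bR : \overline{L}(-a) \text{ is arithmetically big}\}$ applied to twists $\overline{D}(-a)$ with $a \nearrow \Essmin(\overline{D})$. My plan would be to first reduce to the case where $D$ is ample by considering perturbations $\overline{D} + \epsilon \overline{A}$ for a fixed arithmetically ample $\overline{A}$ and passing to the limit $\epsilon \to 0$. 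Then arithmetic Fujita approximation would let one replace $\overline{D}$ by a semi-ample adelic divisor $\overline{M}$ on a birational model $X' \to X$ with $\widehat{\vol}(\overline{M})$ close to $\widehat{\vol}(\overline{D})$; on semi-ample $\overline{M}$ Yuan's theorem identifies $\widehat{\vol}(\overline{M}) = \overline{M}^{d+1}$, rendering the problem intersection-theoretic. The core of the argument would then be an induction on $d = \dim X$: use arithmetic Bertini to find an effective hyperplane section $Y \subset X$ along which $\Essmin(\overline{D}|_Y) > 0$ persists, and lift small sections on $Y$ to small sections on $X$ via the exact sequence $0 \to H^0(X, n\overline{D} - Y) \to V_n \to H^0(Y, n\overline{D}|_Y)$.

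The main obstacle, as I see it, is that Zhang's inequality only gives $\Essmin(\overline{D}) \geq \overline{D}^{d+1}/((d+1)D^d)$, which is in general strict, so the arithmetic bigness of $\overline{D}(-a)$ for $a$ approaching $\Essmin(\overline{D})$ cannot be deduced from a top self-intersection computation alone. Bridging this gap seems to require the Boucksom--Chen concave transform $G_{\overline{D}} \colon \Delta(D) \to \bR$ on the Okounkov body of $D$, whose maximum is known to equal $\pam(\overline{D})$; the desired inequality is equivalent to $\max G_{\overline{D}} \geq \Essmin(\overline{D})$. Establishing this means constructing, for every $t < \Essmin(\overline{D})$, sections of $nD$ with prescribed vanishing along an admissible flag and with $v_0$-slope tending to $nt$—that is, upgrading the geometric fact that heights are large on a Zariski-dense open set to a uniform analytic statement about norms of global sections. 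This upgrade, in which the semi-positivity hypothesis must enter in an essential way (likely in combination with an arithmetic Hilbert--Samuel estimate along the flag defining the Okounkov body), is where I expect the bulk of the technical work to lie.
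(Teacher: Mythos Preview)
Your easy inequality and the reduction---via twisting at $v_0$---to the implication ``$\Essmin(\overline{D})>0\Rightarrow\overline{D}$ is big'' (equivalently, ``$\Essmin(\overline{D})\geq 0\Rightarrow\overline{D}$ is pseudo-effective'') are correct and match the paper exactly. But the route you sketch for that implication---Fujita approximation, Yuan's formula, and induction via lifting sections along a Bertini hyperplane---does not close the gap you yourself name: nothing in that chain converts the Zariski-density of points of height $>t$ into the existence of small global sections with $v_0$-norm $\leq e^{-nt}$. Your final suggestion, to show $\max G_{\overline{D}}\geq\Essmin(\overline{D})$ via the concave transform, is simply a restatement of the conjecture (since $\max G_{\overline{D}}=\pam(\overline{D})$ holds unconditionally) and again reduces to producing sections.

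The paper avoids constructing sections altogether by proving the contrapositive: if $\overline{D}$ is \emph{not} pseudo-effective, then $\Essmin(\overline{D})\leq 0$. After passing to a fixed normal model $(\mathcal{X},\mathcal{D})$ with $\mathcal{D}$ relatively nef, the key input is the duality theorem of Boucksom--Demailly--P\u{a}un--Peternell in the function-field case, and its arithmetic analogue due to Ikoma over number fields: a divisor fails to be pseudo-effective precisely when it pairs negatively against some movable class $\varphi_*(\overline{\mathcal{H}}^d)$. This yields, for every proper closed $\mathcal{Y}\subsetneq\mathcal{X}$, an integral curve $C\not\subset\mathcal{Y}$ with $\widehat{\deg}(\overline{\mathcal{D}}_{|C})<0$. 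Semi-positivity enters exactly here: since $\mathcal{D}$ is relatively nef, $C$ cannot be vertical, so it is horizontal and its generic point $x\in X(\overline{K})$ has $h_{\overline{D}}(x)<0$. Such points being Zariski-dense forces $\Essmin(\overline{D})\leq 0$. Arithmetic Bertini does appear, but its role is to slice the class $\overline{\mathcal{H}}^d$ down to an honest curve avoiding $\mathcal{Y}$, not to lift sections inductively.
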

 An explicit statement of this conjecture can be found in \cite[Conjecture 4.1]{Cheniccm}, \cite[section 5.3]{Chencompare}. Although it appeared recently in the literature, Conjecture \ref{conjChen} has been known to experts in Arakelov geometry  for some time and the potential relations between the essential minimum and the asymptotic slope were already discussed in the work of Chen on the differentiability of the arithmetic volume \cite[section 5.2]{Chendiff} (together with applications to equidistribution). In the particular case of a toric metrized divisor on a toric variety, Conjecture \ref{conjChen} is  a consequence of the work of Burgos Gil, Philippon and Sombra \cite{BPSmin} on the essential minimum (see \cite[Corollary 3.10]{BPSmin} and the discussion preceding Theorem \ref{thmmainintro} below). The behaviour of $\Essmin(\overline{D})$ is more subtle in the non-toric setting, and the conjecture remains open in general. The relevance of Conjecture \ref{conjChen} is to suggest a completely new approach to study the essential minimum, allowing one to compute $\Essmin(\overline{D})$ only in terms of the arithmetic of the graded linear series $V_\bullet := \bigoplus_{n \in \bN} V_n$. It also has deep applications in geometry of numbers, as we will see in subsection \ref{paragGNintro}. 
 
 \vspace{4mm}
 
The main achievement of this paper is a proof of Conjecture \ref{conjChen} (see Theorem \ref{thmmain2intro} below). Our approach starts  with the equivalence
\begin{equation}\label{equivbigpam}
\pam(\overline{D}) > 0 \Longleftrightarrow \overline{D} \text{ is big}
\end{equation}
due to Chen (see for example \cite[Proposition 3.11]{Chenfujita}, or Proposition \ref{proppamsup} in this paper). Here the condition that $\overline{D}$ is big means that it has a positive arithmetic volume $\widehat{\vol}(\overline{D}) > 0$ (see subsection \ref{sectionvolume} for details). Using \eqref{equivbigpam} and rescaling norms, we shall see that Conjecture \ref{conjChen} is equivalent to the following theorem, which is the key result of this paper. We say that $\overline{D}$ is pseudo-effective if $\overline{D} + \overline{B}$ is big for any big adelic $\bR$-Cartier divisor $\overline{B}$. 
\begin{theorem}[Theorem \ref{thmmain}]\label{thmmainintro} Assume that $\overline{D}$ is semi-positive and that $D$ is big. Then we have the equivalence
\[\Essmin(\overline{D}) \geq 0 \Longleftrightarrow \overline{D} \textit{ is pseudo-effective}.\]
\end{theorem}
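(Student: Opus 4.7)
The proof splits into the two implications. For the easy direction ($\Leftarrow$), my plan is to combine the elementary inequality $\Essmin(\overline{M}) \geq \pam(\overline{M})$, valid for any adelic $\bR$-Cartier divisor $\overline{M}$ (obtained by evaluating heights at a point outside the support of a global section with small norms), with continuity of $\Essmin$ in the adelic data. Fixing a big semi-positive $\overline{A}$, pseudo-effectivity of $\overline{D}$ gives that $\overline{D} + \epsilon \overline{A}$ is big for every $\epsilon > 0$; equation \eqref{equivbigpam} then yields $\pam(\overline{D} + \epsilon \overline{A}) > 0$, hence $\Essmin(\overline{D} + \epsilon \overline{A}) > 0$. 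Letting $\epsilon \to 0$ using continuity of $\Essmin$ gives $\Essmin(\overline{D}) \geq 0$.

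For the hard direction ($\Rightarrow$), I would reduce the theorem to the following sublemma: \emph{if $\overline{D}$ is semi-positive, $D$ is big, and $\Essmin(\overline{D}) > 0$, then $\overline{D}$ is big}. The reduction uses that every big adelic $\bR$-divisor dominates a big semi-positive one, plus the trivial superadditivity $\Essmin(\overline{D} + \overline{B}) \geq \Essmin(\overline{D}) + \Essmin(\overline{B})$ coming from $h_{\overline{D}+\overline{B}} = h_{\overline{D}} + h_{\overline{B}}$. For any big semi-positive $\overline{B}$ one has $\Essmin(\overline{B}) \geq \pam(\overline{B}) > 0$ by \eqref{equivbigpam}, so $\Essmin(\overline{D} + \overline{B}) > 0$; applying the sublemma to the still semi-positive divisor $\overline{D} + \overline{B}$ (with $D + B$ big) yields its bigness, hence pseudo-effectivity of $\overline{D}$.

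The sublemma is the technical core, and by \eqref{equivbigpam} amounts to establishing $\pam(\overline{D}) > 0$ whenever $\Essmin(\overline{D}) > 0$ under the semi-positive, $D$-big hypothesis---the reverse of the elementary inequality $\Essmin \geq \pam$. My plan is to exploit the Boucksom--Chen concave transform $G_{\overline{D}} \colon \Delta(D) \to \bR$ on the Okounkov body of $D$, whose supremum is known to equal $\pam(\overline{D})$. Arguing by contradiction, if $G_{\overline{D}} \leq 0$ everywhere on $\Delta(D)$, then combining the arithmetic Hilbert--Samuel formula $\widehat{\vol}(\overline{D}) = \overline{D}^{d+1}$ (valid under semi-positivity) with an arithmetic Fujita approximation of a perturbation $\pi^* \overline{D} + \epsilon \overline{A}$ as a semi-positive nef part plus an effective part should produce Zariski-dense points of non-positive height, contradicting $\Essmin(\overline{D}) > 0$.

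The main obstacle is precisely this last quantitative step: translating the asymptotic vanishing of small-norm global sections (expressed by $\pam(\overline{D}) \leq 0$) into a concrete construction of small-height points outside any prescribed proper closed subscheme. The semi-positivity hypothesis is indispensable here, as both the arithmetic Hilbert--Samuel formula and the Boucksom--Chen variational framework require it; without it the inequality $\Essmin \leq \pam$ can fail, and the theorem cannot be extended to arbitrary continuous metrics without additional positivity assumptions.
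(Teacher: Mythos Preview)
Your easy direction ($\Leftarrow$) is fine and matches the paper's Lemma~\ref{lemmaineqessmin} (the paper phrases it via small sections rather than $\pam$, but the content is the same). Your reduction of the hard direction to the sublemma ``$\overline{D}$ semi-positive, $D$ big, $\Essmin(\overline{D})>0 \Rightarrow \overline{D}$ big'' is also valid, though the paper does not pass through bigness at all and attacks pseudo-effectivity directly.

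The genuine gap is exactly where you locate it yourself: you have no mechanism to pass from $\pam(\overline{D})\leq 0$ (an asymptotic statement about the \emph{non-existence} of small sections) to the existence of Zariski-dense points of small height. Neither the concave transform, nor arithmetic Hilbert--Samuel, nor Fujita approximation produces such points; those tools only relate $\pam$, $\widehat{\vol}$, $\widehat{\vol}_\chi$ and $h_{\overline{D}}(X)$ to one another. Knowing for instance that $h_{\overline{D}}(X)\leq 0$ does not force $\Essmin(\overline{D})\leq 0$: Zhang's inequality runs the wrong way for this, and its proof in the paper is in any case \emph{derived from} the theorem you are trying to prove. Your sketch therefore stops short of the heart of the argument.

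The paper's method is entirely different and geometric. It first reduces to a fixed normal model $(\mathcal{X},\mathcal{D})$ with $\mathcal{D}$ relatively nef (this is where semi-positivity enters). Then it invokes the duality between the pseudo-effective cone and movable curves: the Boucksom--Demailly--P\u{a}un--Peternell theorem over function fields, and Ikoma's arithmetic analogue over number fields (combined with Moriwaki's arithmetic Bertini to cut down to curves). If $\mathcal{D}$ is not pseudo-effective, this yields, through \emph{any} prescribed proper closed subset, an integral curve $C$ with $\mathcal{D}\cdot C<0$ (respectively $\widehat{\deg}(\overline{\mathcal{D}}_{|C})<0$). Relative nefness forces $C$ to be horizontal, so its generic point lies in $X(\overline{K})$ and has negative height by the intersection formula for heights. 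This produces the Zariski-dense set of small-height points that your approach is missing.
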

 When $D$ is big and $\overline{D}$ is pseudo-effective, it is not hard to see that $\Essmin(\overline{D})\geq 0$. The other implication in the theorem is more challenging. To outline the strategy of the proof, we consider the following simplified setting~: $K=k(C_K)$ is a function field and there exists a normal and proper model $\pi_{\mathcal{X}} \colon \mathcal{X} \rightarrow C_K$ of $X$, together with a relatively nef $\bR$-Cartier divisor $\mathcal{D}$ on $\mathcal{X}$, such that for any $v \in \Sigma_K$ the metric on $D$ at the place $v$ is induced by $\mathcal{D}$. We have a Cartesian diagram
\begin{equation*}
\begin{tikzcd}
X \arrow{r} \arrow{d}{\pi} & \mathcal{X} \arrow{d}{\pi_{\mathcal{X}}}  \\
\Spec K  \arrow{r}& C_K. 
\end{tikzcd}
\end{equation*} 
 In this case, we can interpret the height function as an intersection number. More precisely, for any point $x \in X(\overline{K})$ with residue field $K(x)$, we have
\[h_{\overline{D}}(x) = \frac{\mathcal{D} \cdot C_x}{[K(x) : K_0]},\]
where $C_x$ is the Zariski-closure of $x$ in $\mathcal{X}$. Note that $C_x \subset \mathcal{X}$ is an integral curve and $\pi_{\mathcal{X}}(C_x) = C_K$.  The key ingredient of the proof is a deep theorem of Boucksom--Demailly--P\u{a}un--Peternell \cite{BDPP}, which states that the pseudo-effective cone of $\mathcal{X}$ is dual to the cone of movable curves.  If $\mathcal{D}$ is not pseudo-effective, this result implies the existence of a family of integral curves $(C_t)_{t \in T}$ covering a dense subset of $\mathcal{X}$ and such that $\mathcal{D}\cdot C_t < 0$ for any $t \in T$. Since  $\mathcal{D}$ is relatively nef by assumption, all  the curves $C_t$ are horizontal, i.e.\ $\pi_{\mathcal{X}}(C_t) = C_K$ for every $t \in T$.  If we denote by $\eta_t \in X(\overline{K})$ the generic point of $C_t$ for every $t \in T$, the set 
\[\Lambda_T = \{\eta_t \ | \ t \in T\} \subset X(\overline{K})\]
is dense and consists of points with negative height, hence $\Essmin(\overline{D}) \leq 0$. By contraposition, this proves the implication
\[\Essmin(\overline{D}) > 0 \implies \overline{D} \text{ is pseudo-effective}.\]
If $\Essmin(\overline{D}) = 0$, the above implication together with a limit argument shows that $\overline{D}$ is also pseudo-effective.  When $K$ is a number field, we use arithmetic intersection theory to adapt the above argument. We first reduce the problem to a fixed model and work with an arithmetic variety on $\Spec \cO_K$, and then apply an  arithmetic analogue of Boucksom--Demailly--Pa\u{u}n--Peternell's theorem due to Ikoma \cite{Ikoma15}. To do so we also use an arithmetic Bertini-type theorem due to Moriwaki \cite{Moriwaki94}.   

 One can interpret Theorem \ref{thmmainintro} as a numerical criterion for arithmetic pseudo-effectiveness. As mentioned before, it is equivalent to Conjecture \ref{conjChen} (see Remark \ref{remapamcursor}). We will prove the following theorem, which also gives an analogue of Conjecture \ref{conjChen} relating the absolute minimum $\zeta_{\mathrm{abs}}(\overline{D}) := \inf_{x \in X(\overline{K})}h_{\overline{D}}(x)$ and the \textit{asymptotic minimal slope} $\pim(\overline{D})$ (see Definition \ref{defiasyslopes}). The latter relies on an arithmetic Nakai--Moishezon's criterion due to Zhang \cite{Zhangplav}, generalized by Chen and Moriwaki \cite{CMadeliccurves}.
\begin{theorem}[Theorem \ref{thmpamessmin}]\label{thmmain2intro}
Assume that $\overline{D}$ is semi-positive and that $D$ is big. Then 
\[\Essmin(\overline{D}) = \pam(\overline{D}).\]
If moreover $D$ is a semi-ample $\bQ$-divisor, then
\[\Essabs(\overline{D}) = \pim(\overline{D}).\]
\end{theorem}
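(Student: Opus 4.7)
The plan is to derive both equalities from Theorem \ref{thmmainintro} via a rescaling trick at the distinguished place $v_0$. Fix an adelic $\bR$-divisor $\overline{\mathcal{E}}$ whose underlying Cartier divisor is trivial, whose metric is trivial at every place $v \neq v_0$, and whose contribution at $v_0$ is normalized so that, for every $s \in \bR$, the perturbation $\overline{D}(s) := \overline{D} - s\overline{\mathcal{E}}$ satisfies
\[
\Essmin(\overline{D}(s)) = \Essmin(\overline{D}) - s, \qquad \pam(\overline{D}(s)) = \pam(\overline{D}) - s,
\]
together with the analogous identities for $\Essabs$ and $\pim$. All four relations follow directly from the definitions.

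For the first equality, the strategy is to identify two half-lines of $\bR$. Chen's equivalence \eqref{equivbigpam} and the shift formula for $\pam$ give $\{s \in \bR : \overline{D}(s) \text{ is big}\} = (-\infty, \pam(\overline{D}))$. Using homogeneity of $\pam$, one verifies that pseudo-effectiveness of $\overline{D}(s)$ is equivalent to $\pam(\overline{D}(s)) \geq 0$: indeed, for $s > \pam(\overline{D})$ and $s'$ chosen with $2\pam(\overline{D}) - s < s' < \pam(\overline{D})$, the divisor $\overline{B} := \overline{D}(s')$ is big while $\overline{D}(s) + \overline{B} = 2\overline{D}(\tfrac{s+s'}{2})$ has $\pam = 2\pam(\overline{D}) - s - s' < 0$ and so fails to be big. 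Hence $\{s : \overline{D}(s) \text{ is pseudo-effective}\} = (-\infty, \pam(\overline{D})]$. Since each $\overline{D}(s)$ is semi-positive with the same big underlying divisor $D$, Theorem \ref{thmmainintro} identifies this half-line with $\{s : \Essmin(\overline{D}(s)) \geq 0\} = (-\infty, \Essmin(\overline{D})]$, and matching their suprema yields $\Essmin(\overline{D}) = \pam(\overline{D})$.

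For the second equality, assuming in addition that $D$ is a semi-ample $\bQ$-divisor, I would combine the first equality with the arithmetic Nakai--Moishezon criterion of Zhang \cite{Zhangplav} and its adelic-curve generalization by Chen and Moriwaki \cite{CMadeliccurves}. From the elementary identity
\[
\Essabs(\overline{D}) = \inf_{Y} \Essmin(\overline{D}|_Y),
\]
where $Y$ runs over the integral closed subschemes of $X$, the first equality applied to each restriction $\overline{D}|_Y$ with $D|_Y$ big yields $\Essabs(\overline{D}) = \inf_Y \pam(\overline{D}|_Y)$. The Nakai--Moishezon criterion characterizes positivity of $\pim(\overline{D}(s))$ precisely by positivity of the asymptotic maximal slopes $\pam(\overline{D}(s)|_Y)$ for all integral $Y$, so replaying the rescaling argument at this level matches the admissible-$s$ sets for $\Essabs$ and $\pim$ and gives $\Essabs(\overline{D}) = \pim(\overline{D})$.

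The main obstacle lies in the second equality: applying the first equality to restrictions $\overline{D}|_Y$ requires $D|_Y$ to be big, which can fail when $Y$ is contained in a fiber of the morphism $\phi$ attached to $|mD|$ (for $m$ clearing denominators). On those exceptional $Y$ the divisor $D|_Y$ is numerically trivial, and a separate argument is needed to show that such subvarieties do not strictly decrease $\inf_Y \Essmin(\overline{D}|_Y)$ below the value coming from non-contracted $Y$. The semi-ample hypothesis on $D$ is precisely what makes this reduction work, since it ensures that $\phi$ is globally defined and that $D|_Y$ is big for every $Y$ not contained in a fiber of $\phi$.
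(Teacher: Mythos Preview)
Your argument for the first equality is correct and is exactly the paper's: both introduce the twist $\overline{D}(s)=\overline{D}-s\pi^*\overline{\xi}_0$, use Chen's equivalence \eqref{equivbigpam} to identify $\pam(\overline{D})$ with the pseudo-effectivity threshold, and then invoke Theorem \ref{thmmainintro} (packaged in the paper as Corollary \ref{coromain}).

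For the second equality your strategy is again the paper's, with two remarks. First, the paper proves the inequality $\Essabs(\overline{D})\geq\pim(\overline{D})$ by a short direct argument that avoids both part (1) and Nakai--Moishezon: semi-ampleness supplies, for each $x\in X(\overline{K})$ and each $n$ large and divisible, a basis element $\phi_i\in H^0(X,nD)$ with $x\notin\Supp(nD+(\phi_i))$, and the height formula then gives $nh_{\overline{D}}(x)\geq\lambda_{\min,n}(\overline{D})$. You should include this elementary step rather than extracting the easy direction from the Nakai--Moishezon equivalence. Second, for the converse inequality the paper does precisely what you describe: for $t$ strictly below the nef threshold it asserts $\pam(\overline{D}_t|_Y)=\Essmin(\overline{D}_t|_Y)>0$ for \emph{every} irreducible $Y\subset X$ and feeds this into the criterion \cite[Theorem 7.4.1]{CMadeliccurves} to conclude $\pim(\overline{D}_t)\geq 0$. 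The paper applies part (1) to each such $Y$ without verifying that $D|_Y$ is big, so the obstacle you raise in your last paragraph is present verbatim in the published proof and is not treated there as requiring a separate argument. Your sketch therefore matches the paper, and your caution about subvarieties contracted by the semi-ample map is, if anything, more scrupulous than the source.
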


We present applications of Theorem \ref{thmmain2intro} in two directions. In the spirit of \cite{BPSmin}, we first give an interpretation of the essential minimum through convex analysis, and study some consequences for  Zhang's theorem on minima. Secondly we apply Theorem \ref{thmmain2intro} to projective spaces to study successive minima of hermitian vector spaces.  

\subsection{Essential minimum and Okounkov body} Assume that $D$ is big. In \cite{BPSmin}, Burgos Gil, Philippon and Sombra proposed a systematic study of the essential minimum when $\overline{D}$ is a toric metrized divisor on a toric variety. The main theorem of \cite{BPSmin} (Theorem A) relates $\Essmin(\overline{D})$ to the maximum of the \textit{roof function} $\vartheta_{\overline{D}} \colon \Delta \rightarrow \bR$, which is a concave function encoding arithmetic data of $\overline{D}$ and defined on the Okounkov body $\Delta$ of $D$. The latter is a convex body in $\bR^d$ and was introduced independently in the seminal papers of Lazarsfeld-Musta\c{t}\u{a} \cite{LazMus} and Kaveh-Khovanskii \cite{KavehKhovanskii}. In the toric case, the Okounkov body is an intrinsic data of $D$. In general, the definition is more complicated  and depends on the choice of a system of parameters $\mathbf{z}$  on $X_{\overline{K}}$ (see section \ref{sectionOkounkov} for details). Using Theorem \ref{thmmain2intro}, we will however be able to generalize \cite[Theorem A]{BPSmin} to the case where $X$ is not necessarily toric. Let $\Delta_{\mathbf{z}}(D)\subset \bR^d$ be the Okounkov body of the divisor $D \in \Div(X)_\bR$ defined with respect to a given system $\mathbf{z}$. In \cite{BoucksomChen}, Boucksom and Chen introduced a function $G_{\overline{D},\mathbf{z}}\colon \Delta_{\mathbf{z}}(D) \rightarrow \bR \cup \{-\infty\}$, called the concave transform of $\overline{D}$.  The function $G_{\overline{D},\mathbf{z}}$ generalizes to the non-toric case the roof function $\vartheta_{\overline{D}}$ of Burgos Gil--Philippon--Sombra (see \cite[Introduction]{BMPS}). It turns out that the maximum of the concave transform coincides with the asymptotic maximal slope. As a consequence of Theorem \ref{thmmain2intro}, we have the following.
\begin{coro}[Proposition \ref{propEssminmaxcf}]\label{coroEssminmaxcfintro} If $D$ is big, then 
$\Essmin(\overline{D}) \geq \max_{\alpha \in \Delta_{\mathbf{z}}(\overline{D})}G_{\overline{D},\mathbf{z}}(\alpha)$,
with equality if $\overline{D}$ is semi-positive.
\end{coro}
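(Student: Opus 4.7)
The plan is to decompose the statement into two pieces: first, identify the maximum of the concave transform with the asymptotic maximal slope; second, combine this with a comparison between $\Essmin(\overline{D})$ and $\pam(\overline{D})$ and invoke Theorem \ref{thmmain2intro} for the equality case.

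For the first step, I would unpack Boucksom--Chen's construction of $G_{\overline{D},\mathbf{z}}$. This function is built as a concave envelope (suitably normalised) of a function encoding, at each $\alpha \in \Delta_{\mathbf{z}}(D)$, the asymptotic maximal slope of the sub-graded linear series of $V_\bullet = \bigoplus_n H^0(X,nD)$ consisting of sections whose valuation with respect to $\mathbf{z}$ is at least $n\alpha$. Taking the supremum over $\alpha \in \Delta_{\mathbf{z}}(D)$ removes the valuation constraint and recovers $\pam(\overline{D})$, yielding
\[\max_{\alpha \in \Delta_{\mathbf{z}}(D)} G_{\overline{D},\mathbf{z}}(\alpha) = \pam(\overline{D}).\]

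For the second step, I would establish the comparison $\Essmin(\overline{D}) \geq \pam(\overline{D})$ directly from definitions, with no positivity hypothesis. Fix $\varepsilon > 0$; by definition of $\pam(\overline{D})$ there exists an integer $n \geq 1$ and a section $\phi \in V_n \setminus \{0\}$ with $\|\phi\|_{v_0, \sup} \leq e^{-n(\pam(\overline{D}) - \varepsilon)}$ and $\|\phi\|_{v,\sup} \leq 1$ for $v \nmid v_0$. Setting $Y = \Supp(\divi \phi) \subsetneq X$, a direct computation using the product formula and the pointwise bound $\|\phi(x)\|_v \leq \|\phi\|_{v,\sup}$ gives $h_{\overline{D}}(x) \geq \pam(\overline{D}) - \varepsilon$ for every $x \in X(\overline{K}) \setminus Y$. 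Thus $\Essmin(\overline{D}) \geq \pam(\overline{D}) - \varepsilon$, and letting $\varepsilon \to 0$ yields the required inequality.

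Combining the two steps gives $\Essmin(\overline{D}) \geq \max_\alpha G_{\overline{D},\mathbf{z}}(\alpha)$ in general, and under the semi-positivity hypothesis Theorem \ref{thmmain2intro} upgrades the comparison to $\Essmin(\overline{D}) = \pam(\overline{D})$, turning the inequality in the corollary into an equality. The main obstacle will be the first step: matching Boucksom--Chen's filtration-theoretic definition of $G_{\overline{D},\mathbf{z}}$ to the invariant $\pam(\overline{D})$ requires careful bookkeeping with graded sub-linear-series and the passage to concave envelopes, though it is essentially formal once the definitions are laid out in parallel. The rest of the argument follows directly from definitions and the main theorem of the paper.
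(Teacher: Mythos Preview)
Your approach is essentially the same as the paper's: it proves Proposition~\ref{propEssminmaxcf} by combining Lemma~\ref{lemmapamcf} (your first step, identifying $\max_\alpha G_{\overline{D},\mathbf{z}}(\alpha)$ with $\pam(\overline{D})$) and Theorem~\ref{thmpamessmin}\,\eqref{itempamessmin} (your second step plus the equality case). Your direct small-section argument for the inequality $\Essmin(\overline{D}) \geq \pam(\overline{D})$ is a clean shortcut for what the paper obtains by routing through Proposition~\ref{proppamsup} and Corollary~\ref{coromain}; just note that the closed subset to exclude is $\Supp(nD + (\phi))$ rather than $\Supp((\phi))$, and that the norm condition is $\|\phi\|_{v,\sup} \leq e^{-n(\pam(\overline{D})-\varepsilon)}$ for all $v \mid v_0$.
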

 When $\overline{D}$ is semi-positive and $D$ is big, this corollary permits to read the essential minimum directly on the Okounkov body of $D$. In addition to the theoretical interest of this result, its relevance lies in the fact that several invariants associated to $\overline{D}$ can be easily computed using the concave transform \cite[sections 2.4 and 3.1]{BoucksomChen}. For example, we have the following formula \cite[Theorem 3.1]{BoucksomChen} for the $\chi$-volume $\widehat{\vol}_{\chi}(\overline{D})$ of $\overline{D}$ (under mild positivity assumptions on $\overline{D}$, see subsection \ref{sectionconctransfvol} for details)~:
 \begin{equation}\label{eqvolchiintro}
 \widehat{\vol}_{\chi}(\overline{D}) = [K:K_0](d+1)!\int_{\Delta_{\mathbf{z}}(\overline{D})} G_{\overline{D},\mathbf{z}}d\lambda.
 \end{equation}
 We present an application of Corollary \ref{coroEssminmaxcfintro} to Zhang's theorem in the next paragraph.

\subsection{On Zhang's theorem on minima}\label{paragZhangintro}  The essential and absolute minima are part of a series of invariants associated to $\overline{D}$, originally introduced by Zhang \cite{Zhangplav}. For all $\lambda \in \bR$, we denote by $X_{\overline{D}}(\lambda)$ the Zariski-closure in $X$ of the set 
\[\{x \in X(\overline{K}) \ | \ h_{\overline{D}}(x) \leq \lambda\}.\]
For every integer $i \in \{1, \ldots, d +1\}$,  the $i$\textit{-th Zhang minimum} $\zeta_i(\overline{D})$ is defined by
\[\zeta_i(\overline{D}) = \inf \{\lambda \in \bR \ | \ \dim X_{\overline{D}}(\lambda) \geq i-1 \}.\]
Note that  $\zeta_{d+1}(\overline{D}) = \Essmin(\overline{D})$ and $\Essabs(\overline{D}) = \zeta_1(\overline{D})$. We denote by  $h_{\overline{D}}(X)$ the height of $X$ with respect to $\overline{D}$ (see \cite[section 2.5]{BPS14} for a definition).  A celebrated theorem of Zhang \cite[Theorem 5.2]{Zhangplav} relates this invariant to the successive minima $\zeta_i(\overline{D})$.  It was generalized by Gubler \cite[Proposition 5.10]{Gublerequi} for global fields with weaker positivity assumptions on $\overline{D}$.
\begin{theorem}[Zhang, Gubler]\label{thmZhangintro} Assume that $\overline{D} \in \widehat{\Div}(X)_\bQ$ is semi-positive and that the underlying divisor $D$ is big and semi-ample. Then 
\[(d+1)\Essmin(\overline{D}) \geq \frac{h_{\overline{D}}(X) }{D^d} \geq \sum_{i=1}^{d+1} \zeta_i(\overline{D}).\]
\end{theorem}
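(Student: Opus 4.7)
The plan is to prove each inequality separately, by induction on $d = \dim X$, following Zhang's original strategy and exploiting the identification $\Essmin(\overline{D}) = \pam(\overline{D})$ provided by Theorem \ref{thmmain2intro}.

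For the upper bound $(d+1)\Essmin(\overline{D}) \geq h_{\overline{D}}(X)/D^d$, the base case $d = 0$ is immediate. For the inductive step, fix $\epsilon > 0$. By Theorem \ref{thmmain2intro} together with the definition of $\pam$, for all sufficiently large $n$ there exists a non-zero $s_n \in H^0(X, nD)$ with $\|s_n\|_{v,\sup} \leq 1$ at every place $v$ and $\|s_n\|_{v_0,\sup} \leq e^{-n(\Essmin(\overline{D}) - \epsilon)}$ at one chosen place $v_0$. Using Moriwaki's arithmetic Bertini theorem \cite{Moriwaki94} (or its function-field analogue), I would take $Y_n := \divi(s_n)$ to be a prime divisor not contained in $X_{\overline{D}}(\Essmin(\overline{D}) - \epsilon)$. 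The arithmetic intersection formula
\[(n\overline{D})^{d+1} = (n\overline{D})^d \cdot \widehat{\divi}(s_n) - \sum_v \int \log\|s_n\|_v^2 \, c_1(\overline{D})^d\]
then splits the left-hand side into a geometric term expressed through $\overline{D}|_{Y_n}$ (to which the inductive hypothesis applies, since $\dim Y_n = d-1$) and an analytic term bounded by the sup-norm control to $\leq n[K:K_0](\Essmin(\overline{D}) - \epsilon) D^d$. Dividing by $n^{d+1}(d+1)[K:K_0]D^d$ and letting $n \to \infty$, $\epsilon \to 0$ yields the stated bound.

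For the lower bound $h_{\overline{D}}(X)/D^d \geq \sum_{i=1}^{d+1}\zeta_i(\overline{D})$, I would again induct on $d$. Fix $\epsilon > 0$ and, using the definition of $\zeta_d$, select an irreducible divisor $Y \subsetneq X$ of dimension $d-1$ contained in $X_{\overline{D}}(\zeta_d(\overline{D}) + \epsilon)$. Since any $(i-1)$-dimensional subvariety of $Y$ is also a subvariety of $X$, we have $\zeta_i(\overline{D}|_Y) \geq \zeta_i(\overline{D})$ for each $i \leq d$, so the inductive hypothesis applied to $(Y,\overline{D}|_Y)$ gives $h_{\overline{D}|_Y}(Y)/(D|_Y)^{d-1} \geq \sum_{i=1}^{d}\zeta_i(\overline{D})$. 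To bring in $\zeta_{d+1}(\overline{D}) = \Essmin(\overline{D})$, choose a section of $nD$ cutting out $Y$ whose sup-norm is comparable to $e^{-n(\Essmin(\overline{D}) - \epsilon)}$ off a proper subset (again via Theorem \ref{thmmain2intro}). Plugging this into the same arithmetic intersection formula as above, but now using the norm bound to extract a positive contribution, produces the additional term $\zeta_{d+1}(\overline{D})$ on top of the induction. Telescoping and letting $\epsilon \to 0$ yields the inequality.

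The central obstacle in both halves is the arithmetic Bertini step: one must exhibit sections $s_n$ with the prescribed sup-norm control \emph{and} with zero locus generic relative to a prescribed closed subset. In the number field case this is supplied by Moriwaki \cite{Moriwaki94}, while in the function field case one follows the approach of Gubler \cite{Gublerequi}. A secondary subtlety is ensuring that semi-positivity and the relevant positivity of $D$ are inherited by the restrictions $\overline{D}|_{Y_n}$ and $\overline{D}|_{Y}$ used at each inductive step; this follows from standard restriction properties of adelic metrics provided the chosen divisors $Y_n, Y$ are generic enough.
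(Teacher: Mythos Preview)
This theorem is a \emph{cited} result of Zhang and Gubler; the paper does not prove it in full. What the paper does prove anew is the first inequality $(d+1)\Essmin(\overline{D})\ge h_{\overline{D}}(X)/D^d$, in Corollary~\ref{coroineqZhangheight}, and the method there is entirely different from yours. The paper's argument is non-inductive: it combines $\Essmin(\overline{D})=\max_{\Delta_{\mathbf z}(D)} G_{\overline D,\mathbf z}$ (Proposition~\ref{propEssminmaxcf}), the Boucksom--Chen integral formula $\widehat{\vol}_\chi(\overline D)=(d+1)![K:K_0]\int_{\Delta_{\mathbf z}(D)}G_{\overline D,\mathbf z}\,d\lambda$ (Theorem~\ref{thmXiVOlOkounkov}), and Moriwaki's identity $\widehat{\vol}_\chi(\overline D)=[K:K_0]h_{\overline D}(X)$ (Theorem~\ref{thmvolchiMoriwaki}), after which the inequality is just $\max G\ge \frac{1}{\vol(\Delta)}\int G$. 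Your inductive route via arithmetic Bertini and the intersection formula is essentially Zhang's original argument dressed up with Theorem~\ref{thmpamessmin}; it works in spirit, but it is strictly more laborious and does not yield the equality criterion that the concave-transform proof gives for free.

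Your sketch for the second inequality has a genuine gap. You first \emph{choose} an irreducible divisor $Y\subset X_{\overline D}(\zeta_d(\overline D)+\epsilon)$ by the definition of $\zeta_d$, and then later ask to ``choose a section of $nD$ cutting out $Y$''. There is no reason for such a section to exist: $Y$ need not be linearly equivalent to any multiple of $D$, and even if it were, there is no mechanism forcing that section to have sup-norm close to $e^{-n\Essmin(\overline D)}$. In Zhang's and Gubler's actual arguments the logic runs the other way: one first produces the small section (which exists once one twists so that the divisor becomes arithmetically effective), and the zero locus of \emph{that} section is the $Y$ to which one restricts; the comparison $\zeta_i(\overline D|_Y)\ge \zeta_i(\overline D)$ then comes from the fact that small points must lie on $Y$ (Lemma~\ref{lemmaheightsupport}), not from an a priori choice of $Y$. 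As written, your two choices of $Y$ are incompatible, and the step does not go through.
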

 The first inequality (referred to as the ``fundamental inequality" in the articles of Gubler \cite{GublerBogomolov, Gublerequi})  is of particular interest in equidistribution problems and in proofs of the Bogomolov Conjecture. Except from the recent results of \cite{BPRS} concerning the toric case, all the equidistribution theorems in the literature apply only when the latter is an equality, i.e.\ when
 \begin{equation}\label{equalityZhangintro}
\Essmin(\overline{D}) =\frac{h_{\overline{D}}(X) }{ (d+1)D^d}.
 \end{equation}
It is therefore of particular interest to characterize when this equality occurs. We refer the reader to the introductions of the papers \cite{BPRS, BPSmin} for a complete discussion and more details on this problem. When $X$ is a toric variety and $\overline{D}$ is a toric metrized divisor,  Burgos Gil, Philippon and Sombra   show that \eqref{equalityZhangintro} holds if and only if the roof function $\vartheta_{\overline{D}}$ is constant \cite[Corollary E]{BPSmin}. Combining Corollary \ref{coroEssminmaxcfintro} with \eqref{eqvolchiintro}, we are able to generalize this result to the non-toric setting. We also obtain a straightforward proof of the first inequality in Theorem \ref{thmZhangintro}, valid for $\bR$-divisors and without semi-ampleness assumption.
\begin{theorem}[Corollary \ref{coroineqZhangheight}]\label{thmineqZhangintro} If $\overline{D}$ is semi-positive and $D$ is big, then
\[\Essmin(\overline{D}) \geq \frac{h_{\overline{D}}(X) }{(d+1)D^{d}},\]
with  equality if and only if the following equivalent conditions are satisfied~:
\begin{enumerate}
\item $G_{\overline{D},\mathbf{z}}$ is constant for any $\mathbf{z}$;
\item\label{condeqZintro2} the sequence $(\lambda_{\max,n}(\overline{D})/n)_{n \geq 1}$ converges to $\frac{h_{\overline{D}}(X) }{(d+1)D^{d}}$.
\end{enumerate}
In that case, $G_{\overline{D},\mathbf{z}}(\alpha) = \Essmin(\overline{D})$ for any $\mathbf{z}$ and any $\alpha \in \Delta_{\mathbf{z}}(\overline{D})$.
\end{theorem}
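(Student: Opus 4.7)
The plan is to combine Corollary \ref{coroEssminmaxcfintro}, which identifies the essential minimum with the maximum of the concave transform, with the Boucksom--Chen integral formula \eqref{eqvolchiintro} and the arithmetic Hilbert--Samuel identity
\[\widehat{\vol}_{\chi}(\overline{D}) \;=\; [K:K_0]\,h_{\overline{D}}(X),\]
valid when $\overline{D}$ is semi-positive and $D$ is big. Fix any admissible system of parameters $\mathbf{z}$ on $X_{\overline{K}}$ and abbreviate $G := G_{\overline{D},\mathbf{z}}$ and $\Delta := \Delta_{\mathbf{z}}(D)$. Recall that by Lazarsfeld--Musta\c{t}\u{a} and Kaveh--Khovanskii, $\vol(\Delta) = D^d/d!$. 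Plugging the Hilbert--Samuel identity into \eqref{eqvolchiintro} and dividing by $[K:K_0](d+1)!$ immediately yields the clean formula
\[h_{\overline{D}}(X) \;=\; (d+1)!\int_\Delta G\, d\lambda.\]

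\textbf{Main inequality and equality case.} By Corollary \ref{coroEssminmaxcfintro}, $\max_\Delta G = \Essmin(\overline{D})$. Bounding the integral by its maximum times $\vol(\Delta)$ gives
\[h_{\overline{D}}(X) \;\leq\; (d+1)!\cdot \Essmin(\overline{D})\cdot \frac{D^d}{d!} \;=\; (d+1)D^d\,\Essmin(\overline{D}),\]
which is the desired inequality. Equality holds if and only if $G$ coincides with its supremum $\Essmin(\overline{D})$ Lebesgue-almost everywhere on $\Delta$; since $G$ is concave on the convex body $\Delta$ (and in particular continuous on its relative interior), this forces $G \equiv \Essmin(\overline{D})$ identically on $\Delta$. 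This proves the equivalence with (1) and the final assertion of the theorem, and shows that the characterization is independent of the choice of $\mathbf{z}$.

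\textbf{Equivalence with (2).} By definition $(\lambda_{\max,n}(\overline{D})/n)_{n\geq 1}$ converges to $\pam(\overline{D})$, and Theorem \ref{thmmain2intro} asserts that $\pam(\overline{D}) = \Essmin(\overline{D})$. Condition (2) is therefore literally the equation $\Essmin(\overline{D}) = h_{\overline{D}}(X)/((d+1)D^d)$, i.e.\ the equality case already analyzed. This establishes (1) $\Leftrightarrow$ (2) and completes the proof.

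\textbf{Main obstacle.} The only real work lies outside this argument: one must verify that the integral formula \eqref{eqvolchiintro} and the Hilbert--Samuel equality $\widehat{\vol}_\chi(\overline{D}) = [K:K_0]h_{\overline{D}}(X)$ are available in the present generality, namely for an adelic $\bR$-Cartier divisor $\overline{D}$ that is merely semi-positive with $D$ big (no semi-ampleness, no rationality). These ingredients will be provided by the sections on Okounkov bodies and on the concave transform and $\chi$-volume (\ref{sectionOkounkov}, \ref{sectionconctransfvol}); once they are at hand, the rest of the proof is the short calculation above, which in particular yields Zhang's ``fundamental inequality'' without the semi-ampleness hypothesis used in Theorem \ref{thmZhangintro}.
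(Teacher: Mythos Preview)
Your proposal is correct and follows essentially the same route as the paper: the paper first proves the inequality with $\widehat{\vol}_\chi(\overline{D})$ in place of $h_{\overline{D}}(X)$ (Theorem \ref{thmineqZhang}) via exactly your chain $\Essmin(\overline{D}) = \max_\Delta G \geq \frac{1}{\vol(\Delta)}\int_\Delta G \geq \frac{\widehat{\vol}_\chi(\overline{D})}{(d+1)![K:K_0]\vol(\Delta)}$, and then invokes Moriwaki's identity $\widehat{\vol}_\chi(\overline{D}) = [K:K_0]h_{\overline{D}}(X)$ (Theorem \ref{thmvolchiMoriwaki}); your handling of condition \eqref{condeqZintro2} via $\pam(\overline{D}) = \Essmin(\overline{D})$ is also exactly what the paper does.

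One point of care: you write the Boucksom--Chen formula \eqref{eqvolchiintro} as an equality from the outset, but Theorem \ref{thmXiVOlOkounkov} only gives $\widehat{\vol}_\chi(\overline{D}) \leq (d+1)![K:K_0]\int_\Delta G\,d\lambda$ in general, with equality when $\inf_\Delta G > -\infty$. The paper is slightly more careful here: it uses only this inequality direction to obtain the main bound and the implication ``equality $\Rightarrow$ $G$ constant'', and then observes that once $G$ is constant it is bounded below, so the Boucksom--Chen formula becomes an equality and yields the converse implication. Your argument goes through verbatim once you replace the displayed ``$=$'' by ``$\leq$'' for the main inequality and note, in the converse direction, that $G \equiv \Essmin(\overline{D})$ trivially satisfies $\inf G > -\infty$.
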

  As in \cite{BPSmin}, we also prove analogues of this theorem when $h_{\overline{D}}(X)$ is replaced by the $\chi$-volume $\widehat{\vol}_{\chi}(\overline{D})$ or the arithmetic volume $\widehat{\vol}(\overline{D})$ (see Theorem \ref{thmineqZhang}). Criterion \eqref{condeqZintro2} in Theorem \ref{thmineqZhangintro} has a nice interpretation through Gaudron's slope theory for adelic vector spaces \cite{Gaudronpentes}, and can be thought of as an ``asymptotic semi-stability" condition (Remark \ref{remacritasystable}). We will study this approach further in section \ref{sectionGN}, restricting our attention to projective spaces $X = \bP_K^d$. 
  
\subsection{Applications to geometry of numbers}\label{paragGNintro} In section \ref{sectionGN} we apply Theorem \ref{thmmain2intro} to the study of $K$-hermitian vector spaces $\overline{E} = (E, (\|.\|_v)_{v \in \Sigma_K})$, which generalize  the notion of euclidean lattices to global fields (see section \ref{sectionhermitian} for definitions). To clarify the exposition we assume that $K$ is a number field in the end of this introduction. In  geometry of numbers, the central objects of study associated to a hermitian $K$-vector space $\overline{E}$ are  its \textit{successive minima}. Various definitions have been introduced by different authors, such as Bombieri--Vaaler \cite{BombieriVaaler}, Roy--Thunder \cite{RoyThunder}, Zhang \cite{Zhangplav}, etc.  We shall focus on the two following definitions. Let  $h_{\overline{E}} \colon E \otimes_K \overline{\bQ} \rightarrow \bR$ be
the logarithmic height function associated to $\overline{E}$ and let $d= \dim E$. For any algebraic extension $K'$ of $K$ and any $\lambda \in \bR$, we consider the ball 
 \[E(\lambda,K') = \{ s \in E \otimes_K K' \ | \ h_{\overline{E}}(s) \leq \lambda\}. \]
 For $i \in \{1,\ldots,d \}$, we define
 the \textit{Roy-Thunder $i$-th minimum} with respect to $K'$ by 
 \[\lambda_i(\overline{E},K')= \inf \{ \lambda \in \bR \ | \ \dim_{K'}(\Vect_{K'} E(\lambda,K')) \geq i\},\]
 and the \textit{Zhang $i$-th minimum} by 
 \[\zeta_i(\overline{E})= \inf \{ \lambda \in \bR \ | \ \dim \mathrm{Zar}( E(\lambda,\overline{\bQ})) \geq i\},\]
 where $\mathrm{Zar}( E(\lambda,\overline{\bQ}))$ denotes the Zariski-closure of $E(\lambda,\overline{\bQ})$ in $E \otimes_K \overline{\bQ}$. Note that $\lambda_i(\overline{E}, \overline{\bQ}) \leq \zeta_i(\overline{E})$ and $\lambda_i(\overline{E}, \overline{\bQ})  \leq \lambda_i(\overline{E}, K')$.

As observed by Gaudron and Rémond \cite[section 3]{GR13}, Zhang's theorem \ref{thmZhangintro} leads to important refinements of absolute Siegel's lemmas previously obtained by Roy and Thunder \cite{RoyThunder}.  Using a similar point of view, Chen \cite{Chencompare, Cheniccm} presented some important consequences of Conjecture \ref{conjChen}, especially towards the \emph{absolute transference problem} (see subsection \ref{paragabstrintro} below). In section \ref{sectionGN} we slightly strengthen Chen's approach by combining the two parts of Theorem \ref{thmmain2intro}. The key result of section \ref{sectionGN} is the following theorem, which relates Zhang's minima to the maximal and minimal slopes of the symmetric powers $S^n (\overline{E}^\vee)$. 
\begin{theorem}\label{thmminslopesintro} We have 
 \[\zeta_d(\overline{E}) = \lim_{n \rightarrow +\infty} \frac{\pmax(S^n (\overline{E}^\vee))}{n}
\ \text{ and } \   \zeta_1(\overline{E}) = \lim_{n \rightarrow +\infty} \frac{\pmin(S^n (\overline{E}^\vee))}{n}.\]
\end{theorem}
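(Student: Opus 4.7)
The plan is to reduce both equalities to the two parts of Theorem~\ref{thmmain2intro} applied to a suitable projective space. Set $X = \bP(E)$, of dimension $d - 1$, and $L = \mathcal{O}_X(1)$, so that one has a canonical identification $H^0(X, nL) = S^n(E^\vee)$ for every $n \geq 0$. The dual hermitian structure $\overline{E}^\vee$ induces, via quotient metrics at every place $v \in \Sigma_K$, a continuous semi-positive adelic metric on $L$; let $\overline{L}$ be the resulting adelic $\bR$-Cartier divisor. The underlying $L$ is ample, hence big and a semi-ample $\bQ$-divisor, so Theorem~\ref{thmmain2intro} applies and yields
\[ \Essmin(\overline{L}) = \pam(\overline{L}), \qquad \Essabs(\overline{L}) = \pim(\overline{L}). \]

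Next, I would identify Zhang's minima of $\overline{E}$ with those of $\overline{L}$. By the product formula $h_{\overline{E}}$ is invariant under the rescaling $s \mapsto \alpha s$ for $\alpha \in \overline{K}^\times$, so it descends to a function on $X(\overline{K})$ that coincides with $h_{\overline{L}}$. Consequently $E(\lambda, \overline{K})$ is an affine cone in $E \otimes_K \overline{K}$, and its Zariski closure is the affine cone over the Zariski closure of $\{x \in X(\overline{K}) : h_{\overline{L}}(x) \leq \lambda\}$ in $X$. This shifts dimensions by one and yields
\[ \zeta_i(\overline{E}) = \zeta_i(\overline{L}) \quad \text{for all } i \in \{1, \ldots, d\}. \]
In particular $\zeta_d(\overline{E}) = \Essmin(\overline{L})$ and $\zeta_1(\overline{E}) = \Essabs(\overline{L})$.

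Combining the first two steps gives $\zeta_d(\overline{E}) = \pam(\overline{L})$ and $\zeta_1(\overline{E}) = \pim(\overline{L})$, so it remains to identify the two asymptotic slopes of $\overline{L}$ with limits of slopes of symmetric powers. By construction $\pam(\overline{L}) = \lim_n \lambda_{\max,n}(\overline{L})/n$, defined via the sup norms $\|\cdot\|_{v,\sup}$ on $V_n = S^n(E^\vee)$ induced by the metrics on $\overline{L}$, and $\pim(\overline{L})$ is defined analogously. These must be compared with the natural hermitian norms making $V_n = S^n(\overline{E}^\vee)$, whose slopes are $\pmax(S^n \overline{E}^\vee)$ and $\pmin(S^n \overline{E}^\vee)$. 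At non-archimedean places the quotient metric and its symmetric powers recover precisely the symmetric power of the quotient norm on $V_n$; at archimedean places a classical Gromov-type comparison between the sup norm on sections of $\mathcal{O}(n)$ over $\bP(E_v)$ and the $L^2$/symmetric-power norm produces a constant $C_n$ with $\log C_n = O(\log n)$. After dividing by $n$ and letting $n \to \infty$, this $O(\log n)$ discrepancy vanishes from every slope, so both pairs of limits coincide and give the theorem.

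The main obstacle is this last step: whereas the dimensional matching of Zhang's minima and the application of Theorem~\ref{thmmain2intro} are formal, carefully checking that the sup norms on $H^0(X, nL)$ and the symmetric power hermitian norms on $S^n(\overline{E}^\vee)$ produce the \emph{same} asymptotic slopes requires uniform place-by-place comparison estimates and, for the $\pim$ side, a symmetric treatment of quotients rather than subspaces. Once this norm comparison is in place, the theorem follows by reading the chain of identifications in reverse.
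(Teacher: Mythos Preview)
Your proposal is correct and follows essentially the same approach as the paper: apply Theorem~\ref{thmmain2intro} to the Fubini--Study metrized $\cO(1)$ on the projective space associated to $E$, identify Zhang's minima of $\overline{E}$ with those of $\overline{L}$ via the cone construction (the paper records this as Remark~\ref{remaZmin}), and then compare sup norms on $H^0(X,nL)$ with the symmetric-power hermitian norms on $S^n(\overline{E}^\vee)$. For the last step the paper uses the explicit archimedean inequality $\|\cdot\|_{\sup}\le\|\cdot\|_{S^n}\le\binom{n+d-1}{d-1}^{1/2}\|\cdot\|_{\sup}$ (equality at finite places) together with Chen's comparison \cite[Theorem~1.1]{Chencompare} between the Bombieri--Vaaler quantities $\lambda_{\max,n},\lambda_{\min,n}$ and the slopes $\pmax,\pmin$, which provides precisely the $o(n)$ control you identified as the main obstacle.
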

 We shall present applications of this new interpretation of Zhang's minima to two classical problems in geometry of numbers. 
 \subsubsection{An absolute transference theorem}\label{paragabstrintro}  An important concern about successive minima is their behaviour with respect to duality. It is well-known and not difficult to see that 
 \[0 \leq \lambda_{i}(\overline{E},K') + \lambda_{d+1-i}(\overline{E}^\vee,K')\]
 for any $i \in \{1, \ldots, d\}$. The so-called transference problem asks for upper bounds for these sums, which are notoriously harder to obtain. When $K' = K = \bQ$, a celebrated theorem of Banaszczyk \cite{Banaszczyk} gives the inequality
 \begin{equation}\label{eqBanintro}
 \lambda_{i}(\overline{E},\bQ) + \lambda_{d+1-i}(\overline{E}^\vee,\bQ) \leq \ln(d),
 \end{equation}
which is highly non-trivial and optimal up to an additive constant. Transference theorems are famous for their applications in lattice-based cryptography, and are also important in Diophantine geometry and transcendental number theory (see for example  \cite{BosserGaudron, GRisogenies}).  It is natural and significant to look for generalizations of \eqref{eqBanintro} to other fields $K, K'$. When $K' = K$, Gaudron \cite[Theorem 36]{Gaudronart18} deduced from Banaszczyk's theorem the upper bound
 \[\lambda_{i}(\overline{E},K) + \lambda_{d+1-i}(\overline{E}^\vee,K) \leq \ln(d) + \frac{1}{[K:\bQ]} \ln |\Delta_{K/\bQ}|,\]
 where  $\Delta_{K/\bQ}$ is the discriminant of $K$. From a Diophantine geometry perspective, the dependence in the discriminant is rather unsatisfactory. In this context, an inequality with $K' =  \overline{\bQ}$ which is independent of the base field $K$ is often much more suitable.   In analogy with Roy--Thunder's approach \cite{RoyThunder}, we call such a bound an absolute transference theorem.  To our knowledge, the best result in this direction is a theorem of Pekker  \cite{Pekker}, giving the inequality
\[ \lambda_{i}(\overline{E},\overline{\bQ}) + \lambda_{d+1-i}(\overline{E}^\vee,\overline{\bQ}) \leq \frac{d-1}{2}.\] 
  It is independent of $K$, but very far from Banaszczyk's one when $K = \bQ$.  The following question was raised by Gaudron \cite[section 4.3]{Gaudronart18}.
\begin{question}[Gaudron]\label{questionGaudron}  Does there exist a polynomial $f \in \bZ[X]$ such that 
 \[\lambda_{i}(\overline{E},\overline{\bQ}) + \lambda_{d+1-i}(\overline{E}^\vee,\overline{\bQ}) \leq \ln(f(d)) \ \ \  \forall \   1 \leq i \leq d = \dim E\]
 for any number field $K$ and any hermitian $K$-vector space $\overline{E}$ ? 
 \end{question} 
  Theorem \ref{thmminslopesintro}  has immediate applications to this problem. The point is that the slopes of $\overline{E}$ behave very well with respect to duality, namely $\pmax(\overline{E}^\vee) = -\pmin(\overline{E})$. 
  Using techniques of Gaudron and Rémond \cite{GR13} to estimate slopes of symmetric powers, we shall derive the following absolute transference theorem from Theorem \ref{thmminslopesintro}. For any $N \in \bN$, we denote by $H_N = 1 + 1/2 + \cdots + 1/N$ the $N$-th harmonic number ($H_0 =0$ by convention).
 
 \begin{theorem}\label{thmtransferenceintro} For any $i \in \{1,\ldots,d \}$, we have
\[ \zeta_{i}(\overline{E}) + \zeta_{d+1-i}(\overline{E}^\vee)\leq H_{i-1} + H_{d-i}.\] 
 \end{theorem}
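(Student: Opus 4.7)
My plan is to apply Theorem \ref{thmminslopesintro} to well-chosen subspaces of $\overline{E}$ and $\overline{E}^\vee$, thereby reducing the sum $\zeta_i(\overline{E}) + \zeta_{d+1-i}(\overline{E}^\vee)$ to an asymptotic slope comparison that can be handled by the Gaudron--Rémond techniques for symmetric powers.

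First, I would exploit the monotonicity of Zhang's minima with respect to subspaces. For any $K$-linear subspace $F \subset E$ of dimension $i$ endowed with the restricted hermitian structure $\overline{F}$, every height-$\leq \lambda$ vector of $F \otimes_K \overline{\bQ}$ has the same height in $E \otimes_K \overline{\bQ}$, so if $\lambda > \Essmin(\overline{F})$ the Zariski closure of $E(\lambda,\overline{\bQ})$ contains $F \otimes_K \overline{\bQ}$ and hence has dimension at least $i$. This yields $\zeta_i(\overline{E}) \leq \Essmin(\overline{F})$, and applying Theorem \ref{thmminslopesintro} to $\overline{F}$ gives
\[
\zeta_i(\overline{E}) \leq \lim_{n \to \infty} \frac{\pmax(S^n \overline{F}^\vee)}{n}.
\]
A completely analogous argument applied to subspaces $G \subset E^\vee$ of dimension $d+1-i$ yields a symmetric upper bound on $\zeta_{d+1-i}(\overline{E}^\vee)$.

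Next, I would invoke the Gaudron--Rémond estimates on slopes of symmetric powers from \cite{GR13} to obtain, for any hermitian $K$-vector space $\overline{V}$ of dimension $r$, an inequality of the shape
\[
\frac{\pmax(S^n \overline{V}^\vee)}{n} + \pmin(\overline{V}) \leq H_{r-1} + o(1) \qquad (n \to \infty).
\]
Passing to the limit and specializing to $\overline{F}$ (dimension $i$) and $\overline{G}$ (dimension $d+1-i$) produces the inequalities $\Essmin(\overline{F}) + \pmin(\overline{F}) \leq H_{i-1}$ and $\Essmin(\overline{G}) + \pmin(\overline{G}) \leq H_{d-i}$.

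Finally, I would use the compatibility of Harder--Narasimhan filtrations under duality to choose $F$ and $G$ so that the slope contributions cancel. Listing the slopes of $\overline{E}$ with multiplicity as $\nu_1 \geq \cdots \geq \nu_d$, those of $\overline{E}^\vee$ are $-\nu_d \geq \cdots \geq -\nu_1$; a dimension argument on the HN flag of $\overline{E}$ and the dual flag on $\overline{E}^\vee$ allows one to pick $F$ and $G$ of the prescribed dimensions satisfying $\pmin(\overline{F}) \geq \nu_i$ and $\pmin(\overline{G}) \geq -\nu_i$, so that $\pmin(\overline{F}) + \pmin(\overline{G}) \geq 0$. Summing the two bounds yields the claimed transference inequality. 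The main obstacle is the symmetric-power slope estimate: extracting exactly the constant $H_{r-1}$ requires precise control of the Harder--Narasimhan polygon of $S^n \overline{V}^\vee$ as $n \to \infty$, and is the technical heart of the argument.
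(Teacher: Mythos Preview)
Your reduction to subspaces and your use of Theorem~\ref{thmminslopesintro} together with the Gaudron--R\'emond estimate are exactly the right ingredients, and the inequality $\Essmin(\overline{F})+\pmin(\overline{F})\le H_{i-1}$ for any $i$-dimensional $\overline{F}$ is correct. The gap is in step~4. You claim that a dimension argument on the Harder--Narasimhan flag produces $F\subset E$ of dimension $i$ and $G\subset E^\vee$ of dimension $d+1-i$ with $\pmin(\overline{F})+\pmin(\overline{G})\ge 0$, but the HN filtration need not contain terms of the prescribed dimensions, and passing to an intermediate subspace inside a semistable graded piece does \emph{not} preserve the minimal slope. Concretely, take $\overline{E}$ semistable of slope $0$ with $d=2$ and $\zeta_1(\overline{E})>0$ (such spaces exist), and $i=1$. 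Then every line $F\subset E_{\overline{K}}$ has $\pmin(\overline{F})=\widehat{\mu}(\overline{F})=-h_{\overline{E}}(F)\le -\zeta_1(\overline{E})<0$, while the only choice $G=E^\vee$ has $\pmin(\overline{G})=-\pmax(\overline{E})=0$; hence $\pmin(\overline{F})+\pmin(\overline{G})<0$ for every admissible pair. So the cancellation you need is simply unavailable.

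The paper's argument is close to yours in spirit but uses a different cancellation device that avoids this obstruction. Instead of minimal slopes it works with the quantities $\sigma_j(\overline{E})=-\sup\{\widehat{\deg}(\overline{F}):F\subset E_{\overline{K}},\ \dim F=j\}$ and the Roy--Thunder duality $\sigma_j(\overline{E})=\sigma_{d-j}(\overline{E}^\vee)-\widehat{\deg}(\overline{E})$. One chooses a subspace $G\subset E_{\overline{K}}$ of dimension $i$ with nearly maximal \emph{degree} (not large $\pmin$), and then applies \emph{both} halves of Theorem~\ref{thmminslopesintro} to the same space $\overline{G}$, obtaining $\zeta_i(\overline{G})+\zeta_1(\overline{G}^\vee)=\alpha_s(\overline{G})\le \alpha_s(i)\le H_{i-1}$. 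Combining this with $\sigma_{i-1}(\overline{G})=\zeta_1(\overline{G}^\vee)-\widehat{\deg}(\overline{G})$ yields $\zeta_i(\overline{E})+\sigma_{i-1}(\overline{E})\le H_{i-1}+\sigma_i(\overline{E})+\varepsilon$; writing the symmetric inequality for $\overline{E}^\vee$ and adding, the $\sigma$-terms cancel via Roy--Thunder duality. The point is that maximizing the degree of an $i$-dimensional subspace is always possible (up to $\varepsilon$), whereas forcing $\pmin$ to reach $\widehat{\mu}_i(\overline{E})$ is not.
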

  Note that 
\[\lambda_{i}(\overline{E},\overline{\bQ}) + \lambda_{d+1-i}(\overline{E}^\vee,\overline{\bQ})\leq \zeta_{i}(\overline{E}) + \zeta_{d+1-i}(\overline{E}^\vee)\]
for any $i \in \{1, \ldots, d\}$.  Since $H_N \leq \ln (2N+1)$ for any $N \geq 0$ \cite{DeTemple}, the upper bound in Theorem \ref{thmtransferenceintro} for $i =1$ is essentially as good as Banaszczyk's.  By the estimate $H_{i-1} + H_{d-i} \leq 2\ln(d)$, Theorem \ref{thmtransferenceintro} answers Question \ref{questionGaudron} and shows that we can choose $f(d) = d^2$.  

\subsubsection{Comparison of successive slopes and minima} For $i \in \{1, \ldots, d\}$, we denote by $\widehat{\mu}_i(\overline{E})$ the $i$-th \textit{successive slope} of $\overline{E}$ (Definition \ref{defimui}). 
 Finding upper bounds for the sums $\zeta_i(\overline{E}) + \widehat{\mu}_i(\overline{E})$  is closely related to the absolute transference problem (see  \cite{Chencompare, Cheniccm} and \cite{Gaudronart18}) and absolute Siegel's lemmas \cite[section 3]{GR13}. For $i = 1$,  it follows from Zhang's theorem \ref{thmZhangintro} that
 \[\zeta_1(\overline{E}) + \widehat{\mu}_1(\overline{E}) \leq (H_d-1)/2\]
 as observed by Gaudron and Rémond \cite[Lemma 3.2]{GR13}. It is significantly harder to deal with the case $i \geq 2$. As for the transference problem, Theorem \ref{thmminslopesintro} opens a new approach to this question, leading to the following statement.
\begin{theorem}\label{corocompminintro} For any $i \in \{1, \ldots, d\}$, we have
\[\zeta_i(\overline{E}) + \widehat{\mu}_i(\overline{E}) \leq H_{d-1}\leq \ln(2d-1).\]
\end{theorem}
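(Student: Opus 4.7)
The plan is to reduce Theorem \ref{corocompminintro} to the essential minimum case via the Harder--Narasimhan filtration of $\overline{E}$, and to deduce that case from the absolute transference inequality of Theorem \ref{thmtransferenceintro} applied at the top index.

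\emph{Reduction step.} Let $0 = E_0 \subsetneq E_1 \subsetneq \cdots \subsetneq E_\ell = E$ be the Harder--Narasimhan filtration of $\overline{E}$, with slopes $\nu_1 > \cdots > \nu_\ell$, and let $j$ be the smallest index such that $\dim E_j \geq i$. Put $\overline{F} := \overline{E_j}$ with the restricted hermitian structure and $r := \dim F \geq i$. By the very definition of the successive slopes (the values $\nu_k$ listed in decreasing order, each with multiplicity $\dim E_k - \dim E_{k-1}$), one has
\[\pmin(\overline{F}) = \nu_j = \widehat{\mu}_i(\overline{E}).\]
The inclusion $\bP(F) \hookrightarrow \bP(E)$ is compatible with heights, so any Zariski-closed low-height locus in $\bP(F)$ embeds into the corresponding locus in $\bP(E)$; combined with the monotonicity $\zeta_k(\overline{F}) \leq \zeta_{k+1}(\overline{F})$, this yields
\[\zeta_i(\overline{E}) \leq \zeta_i(\overline{F}) \leq \zeta_r(\overline{F}) = \Essmin(\overline{F}).\]
It thus suffices to establish $\Essmin(\overline{F}) + \pmin(\overline{F}) \leq H_{r-1}$.

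\emph{Essential minimum case.} Apply Theorem \ref{thmtransferenceintro} to $\overline{F}$ at its top index $i = r$:
\[\zeta_r(\overline{F}) + \zeta_1(\overline{F}^\vee) \leq H_{r-1} + H_0 = H_{r-1}.\]
For any nonzero $s \in F^\vee \otimes_K \overline{K}$, the line it spans has slope $-h_{\overline{F}^\vee}(s) \leq \pmax(\overline{F}^\vee) = -\pmin(\overline{F})$; taking the infimum over $s$ gives $\zeta_1(\overline{F}^\vee) = \Essabs(\overline{F}^\vee) \geq \pmin(\overline{F})$. Substituting into the previous display produces the desired bound on $\Essmin(\overline{F}) + \pmin(\overline{F})$.

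\emph{Conclusion and main obstacle.} Combining the two steps gives $\zeta_i(\overline{E}) + \widehat{\mu}_i(\overline{E}) \leq H_{r-1} \leq H_{d-1}$, and the final estimate $H_{d-1} \leq \ln(2d-1)$ is DeTemple's bound on harmonic numbers. The argument itself is short given the machinery already developed; the genuine difficulty is packaged into Theorem \ref{thmtransferenceintro}, whose proof combines Theorem \ref{thmminslopesintro} (the main theorem of the paper, identifying $\Essmin$ with the asymptotic maximal slope) with a symmetric-power slope estimate of Gaudron--Rémond type \cite{GR13}. One subtle point worth checking carefully at the reduction step is that the hermitian structure one restricts to $\overline{F} = \overline{E_j}$ is the one that both makes $\pmin(\overline{F})$ equal to the successive slope $\widehat{\mu}_i(\overline{E})$ and makes the height function on $\bP(F)$ agree with the restriction of the height on $\bP(E)$; once this is verified, the chain of inequalities is automatic.
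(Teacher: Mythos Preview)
Your proof is correct, and it takes a genuinely different (though logically equivalent) route from the paper's.

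The paper's argument is very short: it invokes Gaudron's reduction (Proposition~\ref{propcomp0}) to pass to the case $i=d$, then applies Theorem~\ref{thmminslopesintro} directly to write
\[
\widehat{\mu}_d(\overline{E}) + \zeta_d(\overline{E}) = -\pmax(\overline{E}^\vee) + \lim_{n\to\infty}\frac{\pmax(S^n(\overline{E}^\vee))}{n} = \alpha(\overline{E}) \leq \alpha(d) \leq H_{d-1},
\]
the last two inequalities being Proposition~\ref{propsymdefect}. By contrast, you (i) perform the reduction explicitly via the Harder--Narasimhan filtration, landing on a subspace $\overline{F}$ of dimension $r\leq d$, and (ii) handle the top-index case by invoking Theorem~\ref{thmtransferenceintro} together with the elementary lower bound $\zeta_1(\overline{F}^\vee)\geq \pmin(\overline{F})$. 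Your reduction step is more self-contained than citing Proposition~\ref{propcomp0} as a black box, and it even yields the marginally sharper intermediate bound $H_{r-1}$. On the other hand, the detour through Theorem~\ref{thmtransferenceintro} is a bit roundabout: that theorem is itself deduced from Theorem~\ref{thmminslopesintro} plus the symmetric-power estimate, so you are implicitly using the same ingredients the paper uses directly. There is no circularity, however, since the bound $H_{i-1}+H_{d-i}$ in Theorem~\ref{thmtransferenceintro} comes from the $\alpha_{\mathrm{s}}$-part of Theorem~\ref{thmtransference}, which is proved independently of Corollary~\ref{corocompmin}.
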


 Our methods remain valid when $K$ is a function field, in which case we obtain
\[\zeta_i(\overline{E}) + \zeta_{d-i+1}(\overline{E}^\vee) = \zeta_i(\overline{E}) + \widehat{\mu}_i(\overline{E}) = 0\ \ \forall\  i \in \{1, \ldots, d\}.\]

\subsection{Organization of the paper} We fix some notations and conventions in section \ref{sectionconventions}. We then recall definitions and basic properties of adelic $\bR$-Cartier divisors, including a few facts on arithmetic divisors on arithmetic varieties and  arithmetic  intersection theory that we will need in the proof of Theorem \ref{thmmainintro} (section \ref{sectionadelicRdivisors}). We also define Zhang's successive minima and we give some elementary properties of the essential minimum (see subsection \ref{sectionZmin}). Section \ref{sectionpseudoeff} is devoted to the proof of Theorem \ref{thmmainintro}. After defining asymptotic slopes we prove Theorem \ref{thmmain2intro} in  section \ref{sectionasyslopes}. We recall the definitions of Oukounkov bodies and concave transform in section \ref{sectionBCtransform}. In section \ref{sectionZhang} we prove Corollary \ref{coroEssminmaxcfintro}, Theorem \ref{thmineqZhangintro} and some variants. In section \ref{sectionhermitian} we recall definitions and basic facts about hermitian vector spaces.   Applications of Theorem \ref{thmmain2intro} to geometry of numbers are discussed in section \ref{sectionGN}, with proofs of Theorems \ref{thmminslopesintro}, \ref{thmtransferenceintro} and 
 \ref{corocompminintro}.

\setcounter{tocdepth}{1}
\tableofcontents

\section{Conventions and terminology}\label{sectionconventions}

\subsection{} A scheme is \textit{integral} if it is reduced and irreducible. By \textit{variety} over a field $k$ we mean an integral scheme of finite type over $k$.  If $X$ is a Noetherian integral scheme, we denote by $\Div(X)$ or $\Div(X)_{\bZ}$ the group of Cartier divisors on $X$ and by $\Rat(X)$ the field of rational functions on $X$. If $\mathbb{K}$ denotes $\bQ$ or $\bR$, we let $\Div(X)_{\mathbb{K}} = \Div(X) \otimes_\bZ \mathbb{K}$ and $\Rat(X)_{\mathbb{K}} = \Rat(X) \otimes_\bZ \mathbb{K}$. The elements of $\Div(X)_{\bK}$ and $\Rat(X)_{\bK}$ are called $\bK$-Cartier divisors and $\bK$-rational functions respectively. A $\bK$-rational function  $\phi \in \Rat(X)_{\bK}$ defines a $\bK$-Cartier divisor $(\phi)\in \Div(X)_\bK$. We denote by $\Supp D$ the support of a  $\bK$-Cartier divisor $D$ (see \cite[section 1.2]{MoriwakiMAMS} for details). It is a Zariski-closed subset of $X$ \cite[Proposition 1.2.1]{MoriwakiMAMS}.

\subsection{} Let $K_0$ denote either the field $\bQ$ of rational numbers or the field of functions $k(T)$,  where $k$ is an arbitrary field. A \textit{global field} is by definition a finite extension of $K_0$. It is either a number field or the field of functions of a regular projective curve $C_K$ over $k$ equipped with a finite morphism $\varphi_K \colon C_K \rightarrow C_0 = \bP^1_k$, unique up to $k$-isomorphism. 

\subsection{} Let $K$ be a global field. Let $\Sigma_K$ be the set of places of $K$ and $\Sigma_{K,\infty} \subset K$ the set of archimedean places. Note that $\Sigma_{K,\infty} = \emptyset$ if $K_0 = k(T)$. For each $v \in \Sigma_K$, we denote by $K_v$ the completion of $K$ at the place $v$  and by $K_{0,v}$ the completion of $K_0$ with respect to the restriction of $v$ to $K_0$. We shall normalize absolute values associated to $v$. 

 Assume that $K$ is a number field. For each place $v \in \Sigma_K$, we let $|.|_v$ be the unique absolute value on $K_v$ extending the usual absolute value $|.|_v$ on $\bQ_v$~: $|p|_v = p^{-1}$ if $v$ is a finite place over a prime number $p$, and $|.|_v = |.|$ is the usual absolute value on $\bR$ if $v$ is archimedean. For each $v \in \Sigma_K$,  we let $n_v(K) = [K_v:\bQ_v]/[K:\bQ]$.  
  We now turn to the function field case, so that $K_0 = k(T)$. The set of places of $K$ is in one-to-one correspondence with the set $C_K(\overline{k})$ of closed points of $C_K$ (here $\overline{k}$ denotes an algebraic closure of the base field $k$). For each $v \in \Sigma_K$ and each $f \in K^\times = k(C_K)^\times$, we denote by $\ord_v(f)$ the order of $f$ in the discrete valuation ring $\cO_{C_K,v}$. We consider the absolute value $|.|_v$ on $K$ given by $|f|_v = e^{-\ord_v(f)}$ and we let  \[n_v(K) = \frac{[K_v : K_{0,v}][k(\varphi_K(v)):k]}{[K:K_0]}.\]

  With these conventions, we have the following product formula~:
\begin{equation}\label{productformula}
\forall x \in K^\times, \ \sum_{v \in \Sigma_K}n_v(K)\ln |x|_v = 0.
\end{equation} 
\subsection{} Let $X$ be a scheme on $\Spec K$. For each $v \in \Sigma_K$,  let $X_v$ be the fiber product $X \times_K \Spec K_v$.  If $v$ is non-archimedean, we denote by $X_v^{\mathrm{an}}$ the analytification of $X_v$ in the sense of Berkovich (see \cite[section 1.2]{BPS14} for a short definition sufficient for our purposes). 
 When $K$ is a number field and $v$ is archimedean, we denote by $\Sigma_v \subset K(\bC)$ the set of embeddings $\sigma \colon K \hookrightarrow \bC$ associated to $v$. Hence $\Sigma_v$ is a singleton if $v$ is real and $\Sigma_v$ consists of two conjugate embeddings if $v$ is complex.  For each $\sigma \in \Sigma_v$, we let $K_\sigma = K \otimes_K^\sigma \bC$ be the tensor product of $K$ with respect to $\sigma$ and $X_\sigma = X \times_K^\sigma \Spec \bC$.  
 We define the analytification of $X_v$ by $X_v^{\mathrm{an}} = X_v(\bC) = \sqcup_{\sigma \in \Sigma_v} X_\sigma(\bC)$. The disjoint union 
 \[X_\infty^{\mathrm{an}} :=  \bigsqcup_{v \in \Sigma_K, \ v | \infty} X_v^{\mathrm{an}} = \bigsqcup_{\sigma \in K(\bC)} X_\sigma(\bC) =X(\bC) \]
 is the set of morphisms of $\Spec \bQ$-schemes $x \colon \Spec \bC \rightarrow X$. We let $F_\infty \colon X_\infty^{\mathrm{an}} \rightarrow X_\infty^{\mathrm{an}}$ be the involution induced by the complex conjugation. 
 
 \subsection{}
 Let $x \in X$ be a closed point and $K(x)$ its residue field. For a place $v \in \Sigma_K$, we define the \textit{orbit} $O_v^{\mathrm{an}}(x) \subset X_v^{\mathrm{an}}$ of $x$ as follows. We denote by $\Sigma_v(x)$ the set of $K_v$-algebra morphisms $K(x) \otimes_K K_v \rightarrow \bC_v$, where $\bC_v$ is an algebraic closure of $K_v$.  For each $\sigma \in \Sigma_v(x)$, we denote by $x_\sigma$ the closed point of $X_v$ given by composition
 \[\Spec \bC_v \overset{\sigma}{\longrightarrow} \Spec (K(x) \otimes_K K_v) \overset{x \times \mathrm{id}}{\longrightarrow} X_v.\]
For each $z \in X_v(\bC_v)$, we define a point $z^{\mathrm{an}} \in X_v^{\mathrm{an}}$ as follows. If $v | \infty$, we let $z^{\mathrm{an}} = z$. If $v$ is finite, 
 $z^{\mathrm{an}}\in X_v^{\mathrm{an}}$ is the unique valuation on $K_v(z)$ extending $v$. Finally, we let
 \[O_v^{\mathrm{an}}(x) = \{ x_{\sigma}^{\mathrm{an}} \ | \ \sigma \in \Sigma_v(x)\}.\]
 Note that  the cardinal of the set $O_v^{\mathrm{an}}(x)$ is $[K(x) : K]$.

\subsection{} Assume that $X$ is projective and geometrically integral.  Let $D \in \Div(X)_\bR$, $v \in \Sigma_K$ and let $D_v \in \Div(X_v)_\bR$ be the pullback of $D$ to $X_v$. We consider an open covering $X_v = \cup_{i=1}^{\ell}U_i$  such that $D_v$ is defined by $f_i \in \Rat(X_v)_\bR^\times$ on $U_i$ for each $i \in \{1,\ldots, \ell\}$. A continuous (respectively smooth) $D$\textit{-Green function} on $X_v^{\mathrm{an}}$ is a  function 
\[g_v \colon X_v^{\mathrm{an}}\setminus (\Supp D_v)^{\mathrm{an}} \rightarrow \bR\]
such that $g_v + \ln |f_i|^2_v$ extends to a continuous (respectively smooth) function on the analytification $U_i^{\mathrm{an}}$ of $U_i$ for each $i \in \{1, \ldots, \ell\}$. We refer the reader to \cite[sections 1.4 and 2.1]{MoriwakiMAMS} for more details on Green functions.

\section{Adelic Cartier divisors}\label{sectionadelicRdivisors} 
In this section we define adelic $\bR$-Cartier divisors and recall different notions of positivity. We mainly follow  the book of Moriwaki \cite{MoriwakiMAMS}. 
\subsection{Notations}\label{paragnotationsadelicdiv}

Throughout this section we fix a global field $K$ and a projective, normal and geometrically integral variety $X$ on $\Spec K$. 
 We let $d := \dim X$ and we fix an algebraic closure $\overline{K}$ of $K$.  We define a scheme $\mathcal{S}$ as follows~:
\begin{enumerate}
\item if $K_0 = \bQ$, $\mathcal{S} = \Spec \cO_K$ is the spectrum of the ring of integers $\cO_K$ of $K$;
\item if $K_0 = k(T)$, $\mathcal{S} = C_K$ is a regular projective integral curve over $k$ with field of functions $k(C_K)=K$. This implies the existence of a finite $k$-morphism $\varphi \colon C_K \rightarrow \bP^1_k$. Moreover, the curve $C_K$ is unique up to $k$-isomorphism.
\end{enumerate}
 In all this section, $\mathbb{K}$ denotes either $\bZ$, $\bQ$ or $\bR$.  Let $D \in \Div(X)_\bK$ and let $U \subset \mathcal{S}$ be a non-empty open subset. A \textit{model} $\mathcal{X}$ of $X$ over $U$ is an integral scheme together with a projective dominant morphism $\pi_{\mathcal{X}} \colon \mathcal{X} \rightarrow U$ with generic fiber $X$. We will denote a model $(\mathcal{X}, \pi_{\mathcal{X}})$ if we need to specify the associated morphism. We say that $\mathcal{X}$ is a \textit{normal model} if $\mathcal{X}$ is normal. If $\mathcal{D}$ is a $\bK$-Cartier divisor on $\mathcal{X}$ such that the restriction of $\mathcal{D}$ to $X$ is equal to $D$, we say that $(\mathcal{X},\mathcal{D})$ is a model of $(X,D)$ over $U$. For each non-archimedean place $v \in U$, we denote by $g_{\mathcal{D},v}$ the $D$-Green function on $X_v^{\mathrm{an}}$ induced by $\mathcal{D}$ (see \cite[section 0.2]{MoriwakiMAMS} for details on this construction). 

\subsection{Definitions} 
\begin{defi}\label{defiadelicRdivisor} A \textit{metrized} $\bK$-\textit{Cartier divisor} on $X$ is a pair $\overline{D} = (D, (g_v)_{v \in \Sigma_K})$ consisting of a $\bK$-Cartier divisor $D$ on $X$ and of a continuous $D$-Green function $g_v$ on $X_v^{\mathrm{an}}$ for each $v \in \Sigma_K$. We say that $\overline{D}$ is an \textit{adelic} $\bK$-\textit{Cartier divisor} on $X$ if moreover the following conditions are satisfied.
 \begin{enumerate}
 \item There exists a dense open subset $U$ of $\mathcal{S}$ and a normal model $(\mathcal{X}_U,\mathcal{D}_U)$ of $(X,D)$ over $U$  such that $g_{v}=g_{\mathcal{D}_U,v}$ for all $v \in U$. 
 \item If $K$ is a number field, $g_v$ is invariant under the complex conjugation $F_\infty$ for each $v \in \Sigma_{K,\infty}$.
 \end{enumerate} 
 The set of adelic $\bK$-Cartier divisors on $X$ forms a group, denoted by $\widehat{\mathrm{Div}}(X)_\bK$. Note that
 $\widehat{\mathrm{Div}}(X)_\bZ \subset \widehat{\mathrm{Div}}(X)_\bQ \subset \widehat{\mathrm{Div}}(X)_\bR$.
 In the sequel, the elements of $\widehat{\Div}(X)_\bZ$ will sometimes be called  \textit{adelic Cartier divisors} for simplicity. 
\end{defi}

\begin{rema} In \cite[Definition 4.1.1]{MoriwakiMAMS}, adelic $\bK$-Cartier divisors are called adelic arithmetic $\bK$-Cartier divisor of $C^0$-type . \end{rema}

Note that an adelic $\bR$-Cartier divisor $\overline{\xi}$ on $\Spec K$ is just a collection of real numbers $\overline{\xi} = (\xi_v)_{v \in \Sigma_K}$ such that $\xi_v =0$ for all but finitely many $v \in \Sigma_K$. The \textit{normalized Arakelov degree} of $\overline{\xi}$ is defined by
\[\widehat{\deg}(\overline{\xi}) = \frac{1}{2}\sum_{v \in \Sigma_K}n_v(K) \xi_v.\]

 \subsection{Arithmetic volume function}\label{sectionvolume}

Let $\overline{D} = (D, (g_v)_{v \in \Sigma_K})$ be an adelic $\bR$-divisor on $X$. We consider the $K$-vector space 
\[H^0(X,D) := \{ \phi \in \Rat(X)^\times \ | \ D + \phi \geq 0\} \cup \{0\}.\] 
 For any non-zero element $\phi \in H^0(X,D)$ and any $v \in \Sigma_K$, the function $\|\phi\|_{v}(x) := |\phi(x)|_v\exp(-g_v(x)/2)$ extends to a continuous function on $X_v^{\mathrm{an}}$ (see \cite[Propositions 1.4.2 and 2.1.3]{MoriwakiMAMS}). We also define $\|\phi\|_{v,\sup} := \sup_{x \in X^{\mathrm{an}}} \|\phi\|_{v}(x)$ and
\[\widehat{H}^0(X,\overline{D}) := \{ \phi \in H^0(X,D)\ | \ \|\phi\|_{v,\sup}\leq 1 \ \forall v \in \Sigma_K \}.\] 
 We let
\[\widehat{h}^0(X,\overline{D}) = \left\{\begin{tabular}{ll}
$\ln \# \widehat{H}^0(X,\overline{D})$ & if $K_0 = \bQ$,\\
$\dim_k \widehat{H}^0(X,\overline{D})$ & if $K_0 = k(T)$.
\end{tabular} \right.\]
\begin{defi}The \textit{arithmetic volume} of $\overline{D}$ is the quantity
\[\widehat{\vol}(\overline{D}) := \limsup_{n \rightarrow +\infty} \frac{\widehat{h}^0(X,n\overline{D})}{n^{d+1}/(d+1)!}.\]
\end{defi} 

\begin{example}\label{examplevolcdf} Assume that there exists a normal model $(\mathcal{X},\mathcal{D})$ over $\mathcal{S}$ such that all the $D$-Green functions of $\overline{D}$ are induced by $\mathcal{D}$. By \cite[Proposition 4.3.1]{MoriwakiMAMS}, we have
\[\widehat{H}^0(X,D) = \{ \phi\in H^0(\mathcal{X},\mathcal{D}) \ | \ \max_{v\in \Sigma_{K,\infty}}\|\phi\|_{v,\sup}\leq 1\}.\]
In particular, if $K_0 = k(T)$ then $\widehat{H}^0(X,D) = H^0(\mathcal{X},\mathcal{D})$. It follows that $\widehat{\vol}(\overline{D})=\vol(\mathcal{D})$ is the (geometric) volume of $\mathcal{D}$ (see \cite[section 2.2]{Laz})~:  
\[\vol(\mathcal{D})= \limsup_{n \rightarrow +\infty} \frac{\dim_k H^0(\mathcal{X}, n\mathcal{D})}{n^{d+1}/(d+1)!}.\] 
\end{example}

 We now recall a well-known continuity property of the volume function $\widehat{\vol}$ due to Moriwaki.
\begin{theorem}\label{thmcontvol} Let $\overline{D},\overline{D}_1, \ldots, \overline{D}_\ell$ be adelic Cartier divisors on $X$. Then 
\[\lim_{\varepsilon_1 \rightarrow 0, \ldots, \varepsilon_\ell \rightarrow 0} \widehat{\vol}(\overline{D}+\varepsilon_1\overline{D}_1 + \cdots + \varepsilon_\ell\overline{D}_\ell) = \widehat{\vol}(\overline{D}).\]
\end{theorem}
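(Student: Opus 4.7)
The plan is to reduce the multi-parameter continuity to a one-parameter continuity under a single effective perturbation, and then establish two-sided bounds.

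First I would use induction on $\ell$ to reduce to $\ell = 1$; set $\overline{E} := \overline{D}_1$ and aim to prove $\lim_{t \to 0}\widehat{\vol}(\overline{D}+t\overline{E}) = \widehat{\vol}(\overline{D})$. Next, I would choose an auxiliary adelic Cartier divisor $\overline{A}$ whose underlying divisor $A$ is ample on $X$ and whose Green functions are sufficiently positive so that both $\overline{A}+\overline{E}$ and $\overline{A}-\overline{E}$ are effective in the adelic sense (underlying divisor effective, all Green functions non-negative). Writing $t\overline{E} = t(\overline{A}+\overline{E}) - t\overline{A}$ for $t>0$ and similarly for $t<0$, a squeeze argument based on the monotonicity of $\widehat{\vol}$ under addition of effective adelic divisors will reduce the problem to proving, for each such effective $\overline{F}$, that $\widehat{\vol}(\overline{D}+t\overline{F}) \to \widehat{\vol}(\overline{D})$ as $t \to 0^+$.

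For such $\overline{F}$, the lower bound $\widehat{\vol}(\overline{D}+t\overline{F}) \geq \widehat{\vol}(\overline{D})$ should be essentially immediate: effectivity of $\overline{F}$ yields an inclusion $\widehat{H}^0(X,n\overline{D}) \subseteq \widehat{H}^0(X,n(\overline{D}+t\overline{F}))$, since the extra factor $\exp(-ntg_{F,v}/2) \leq 1$ in the twisted metric can only decrease the sup-norm of any section $\phi \in H^0(X,nD) \subseteq H^0(X,n(D+tF))$. For the upper bound, the key would be to establish an arithmetic Siu-type inequality
\[\widehat{h}^0(X,n\overline{D}+m\overline{F}) \;\leq\; \widehat{h}^0(X,n\overline{D}) + C\, m(n+m)^d\]
for integers $n,m \geq 0$, with $C$ independent of $n,m$ and depending only on arithmetic intersection numbers of $\overline{F}$ against a fixed positive auxiliary divisor. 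Setting $m = \lfloor tn \rfloor$, dividing by $n^{d+1}/(d+1)!$, and letting $n \to \infty$ would yield $\widehat{\vol}(\overline{D}+t\overline{F}) \leq \widehat{\vol}(\overline{D}) + O(t)$, and then $t \to 0^+$ would conclude.

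The hard part will be establishing the arithmetic Siu inequality. It should combine the geometric Siu inequality (which controls $\dim H^0(X,n(D+tF))$) with a uniform control of sup-norms under perturbation. For this I would approximate both $\overline{D}$ and $\overline{F}$ by adelic divisors arising from a common normal projective model of $X$ over $\mathcal{S}$, reducing the analytic estimates to bounds for arithmetic intersection numbers (Gillet--Soulé in the number-field case, and Moriwaki's framework for adelic $\bR$-divisors in general). The $\bR$-coefficients would be handled by density of $\widehat{\Div}(X)_{\bQ}$ in $\widehat{\Div}(X)_\bR$ together with the homogeneity relation $\widehat{\vol}(\lambda\overline{D}) = \lambda^{d+1}\widehat{\vol}(\overline{D})$ for $\lambda > 0$, which transfers the continuity from the rational to the real setting.
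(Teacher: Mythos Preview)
The paper does not prove Theorem~\ref{thmcontvol} itself: it is quoted as a known result, citing Moriwaki \cite[Theorem~5.2.1]{MoriwakiMAMS} for number fields and Chen--Moriwaki \cite[Theorem~6.4.24]{CMadeliccurves} for global fields (and more generally adelic curves). Your outline is broadly in the spirit of those references, which also proceed via Siu-type estimates for $\widehat{h}^0$ and monotonicity under effective perturbations.

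One caution: the phrase ``induction on $\ell$ to reduce to $\ell=1$'' is not sound as written, since separate continuity in each $\varepsilon_i$ does not imply joint continuity in $(\varepsilon_1,\dots,\varepsilon_\ell)$; the inductive step would need the one-variable estimate to be uniform as the base divisor moves. The fix is already implicit in your own squeeze step: choose a single $\overline{A}$ with $\overline{A}\pm\overline{D}_i$ effective for \emph{all} $i$ simultaneously, so that monotonicity gives
\[\widehat{\vol}\bigl(\overline{D}-t\,\overline{A}\bigr)\;\le\;\widehat{\vol}\bigl(\overline{D}+\textstyle\sum_i\varepsilon_i\overline{D}_i\bigr)\;\le\;\widehat{\vol}\bigl(\overline{D}+t\,\overline{A}\bigr),\qquad t=\textstyle\sum_i|\varepsilon_i|,\]
reducing the multi-parameter limit directly to a single parameter. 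With that adjustment your plan is reasonable, and the arithmetic Siu-type bound you describe is indeed the engine in Moriwaki's proof; filling in the details (reduction to a common model, the precise form of the $\widehat{h}^0$-estimate, and the passage to $\bR$-coefficients) is nontrivial but is exactly what the cited references carry out.
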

 When $K$ is a number field, this is a particular case of \cite[Theorem 5.2.1]{MoriwakiMAMS}. In the function field case, one can prove the result by using similar arguments and the continuity of the geometric volume function \cite[Theorem 2.2.44]{Laz}. Alternatively, Theorem \ref{thmcontvol} is a particular case of \cite[Theorem 6.4.24]{CMadeliccurves} due to Chen and Moriwaki, who recently established the continuity of the arithmetic volume in the much more general setting of adelic curves.

\subsection{Height function}\label{sectionheightdiv}
 Let $\overline{D} = (D, (g_v)_{v \in \Sigma_K})$ be an adelic $\bR$-Cartier divisor on $X$ and let $x \in X(\overline{K})$ be a closed point with residue field $K(x)$. The \textit{height} of a $x$ with respect to $\overline{D}$ is defined to be
\[h_{\overline{D}}(x):= -\frac{1}{[K(x) : K]}\sum_{v \in \Sigma_K}n_v(K) \sum_{z \in O_v^{\mathrm{an}}(x)} \ln \|\phi\|_v(z),\]
where $\phi \in H^0(X,D)$ is any function with $x \notin \Supp(D+(\phi))$. This definition does not depend on the choice of $\phi$ (see \cite[section 4.2]{MoriwakiMAMS} for details). 
\begin{rema} We mention an equivalent formula for the height of $x$, which may be more usual. For each place $w \in \Sigma_{K(x)}$, we fix a $K$-embedding $\sigma_w \colon K(x) \hookrightarrow \bC_v$ associated to $w$, where $v$ denotes the restriction of $w$ to $K$ (note that there are exactly $[K(x)_w : K_v]$ such embeddings). The pair $(x,\sigma_w)$ determines uniquely a point $x_w \in X_v$, and the quantity $\|\phi\|_w(x) := \|\phi\|_v(x_w^{\mathrm{an}})$ does not depend on the choice of $\sigma_w$. With these notations, we have
\[h_{\overline{D}}(x) = -\sum_{w \in \Sigma_{K(x)}} n_w(K(x)) \ln \|\phi\|_w(x).\]
\end{rema}

If we denote by  $\pi \colon X \rightarrow \Spec K$ the structural morphism, we have
\begin{equation}\label{projformulaheight}
h_{\overline{D} + \pi^*\overline{\xi}}(x) = h_{\overline{D}}(x) + \widehat{\deg}(\overline{\xi})
\end{equation}
for any $\overline{\xi} \in \widehat{\Div}(\Spec K)_\bR$ and $x \in X(\overline{K})$. In view of example \ref{exampleheightintersff} below,  this basic fact can be interpreted as a projection formula. 
\begin{example}\label{exampleheightintersff} Assume that $K$ is a function field and that there exists a normal model $(\mathcal{X}, \mathcal{D})$ of $(X,D)$ over $\mathcal{S} = C_K$ such that for each $v \in \Sigma_K$, the $D$-Green function $g_v = g_{\mathcal{D},v}$ is induced by $\mathcal{D}$. Let $x \in X(\overline{K})$ be a closed point and let $C_x \subset \mathcal{X}$ be the Zariski-closure of $x$ in $\mathcal{X}$. Note that $C_x$ is an integral projective curve, and that the restriction ${\pi_{\mathcal{X}}}_{|C_x} \colon C_x \rightarrow C_K$ is surjective. We have 
\[h_{\overline{D}}(x) = \frac{\mathcal{D} \cdot C_x}{[K(x) : K_0]},\]
where $K(x)$ is the residue field of $X$ at $x$ and $ \mathcal{D} \cdot C_x$ is the usual geometric intersection product of cycles. We will see that an analogue of this formula involving the arithmetic intersection product holds when $K$ is a number field (see section \ref{sectionintersection}).
\end{example}

We end this paragraph with the following well-known lemma, giving a lower bound for the height of points at which a small section doesn't vanish.

\begin{lemma}\label{lemmaheightsupport} Suppose there exists $\phi \in {H}^0(X,D) \setminus \{0\}$ with $\|\phi\|_{v,\sup} \leq 1$ for all $v \in \Sigma_K$. For any $x \in X(\overline{K})$ not contained in the support of $D + (\phi)$, we have $h_{\overline{D}}(x) \geq 0$.
\end{lemma}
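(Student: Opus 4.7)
The plan is to apply the defining formula for the height directly with the section $\phi$ provided by the hypothesis. Since $x \notin \Supp(D + (\phi))$, the section $\phi$ is admissible in the definition given in subsection \ref{sectionheightdiv}, so we may write
\[h_{\overline{D}}(x) = -\frac{1}{[K(x) : K]}\sum_{v \in \Sigma_K}n_v(K) \sum_{z \in O_v^{\mathrm{an}}(x)} \ln \|\phi\|_v(z).\]
The strategy is then simply to show that every term in the right-hand side is non-positive before the global minus sign, i.e.\ that $\ln \|\phi\|_v(z) \leq 0$ for every place $v$ and every $z \in O_v^{\mathrm{an}}(x)$.

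For this, I would use the assumption $\|\phi\|_{v,\sup} \leq 1$. By definition $\|\phi\|_{v,\sup} = \sup_{z \in X_v^{\mathrm{an}}} \|\phi\|_v(z)$, so in particular $\|\phi\|_v(z) \leq 1$ for every $z \in O_v^{\mathrm{an}}(x) \subset X_v^{\mathrm{an}}$, which gives $\ln \|\phi\|_v(z) \leq 0$. Combined with $n_v(K) \geq 0$ and $[K(x):K] > 0$, this yields $h_{\overline{D}}(x) \geq 0$.

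The only subtlety is making sure the formula applies in the stated form: one needs $\|\phi\|_v$ to be a continuous function on $X_v^{\mathrm{an}}$ so that the evaluation at the points $z \in O_v^{\mathrm{an}}(x)$ makes sense and is bounded by $\|\phi\|_{v,\sup}$. This is exactly the content of \cite[Propositions 1.4.2 and 2.1.3]{MoriwakiMAMS} recalled in subsection \ref{sectionvolume}, together with the fact that $x \notin \Supp(D+(\phi))$ ensures $\|\phi\|_v(z) > 0$ so that the logarithm is finite. I do not anticipate any real obstacle here; the lemma is essentially an unwinding of definitions.
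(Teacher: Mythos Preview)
Your proposal is correct and follows essentially the same argument as the paper's proof: apply the height formula with the given section $\phi$, then bound each term using $\|\phi\|_v(z) \leq \|\phi\|_{v,\sup} \leq 1$. The paper's version is terser (and in fact omits the factor $n_v(K)$, which you correctly include and note is non-negative), but the idea is identical.
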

 
 \begin{proof}
  If $x \in X(\overline{K})$ is not contained in the support of $D + \divi(\phi)$, we have 
  \[h_{\overline{D}}(x) = - \frac{1}{[K(x):K]}\sum_{v \in \Sigma_K} \sum_{z \in O_v^{\mathrm{an}}(x)} \ln \|\phi\|_v(z) \geq 0\]
since $\|\phi\|_v(z) \leq \|\phi\|_{v,\sup} \leq 1$ for every $v \in \Sigma_K$ and $z \in O_v^{\mathrm{an}}(x)$.
 \end{proof}

\subsection{Positivity of adelic $\bR$-Cartier divisors}

In this section we define several notions of arithmetic positivity, following \cite[section 4.4]{MoriwakiMAMS} and \cite{Zhangadelic}. For the definition of plurisubharmonic functions we refer the reader to \cite[section 2.1]{MoriwakiZariski} and \cite[section 1.4]{MoriwakiMAMS}.
\begin{defi}\label{defipositivity}
 Let $\overline{D} = (D, (g_v)_{v \in \Sigma_K})$ be an adelic $\bR$-Cartier divisor on $X$.  We say that $\overline{D}$ is
\begin{itemize}
\item 
 \textit{big} if $\widehat{\vol}(\overline{D}) > 0$;
\item 
 \textit{pseudo-effective} if $\overline{D} + \overline{A}$ is big for any big adelic $\bR$-Cartier divisor $\overline{A}$ on $X$;
\item \textit{semi-positive} if there exists a sequence $(\mathcal{X}_n, \mathcal{D}_n, (g_{n,v})_{v \in \Sigma_K})_{n \in \bN}$ such that~:
\begin{enumerate}
\item for all $n \in \bN$,  $(\mathcal{X}_n, \mathcal{D}_n)$ is a normal model for $(X,D)$ with $\mathcal{D}_n$ relatively nef, 
\item for all $n \in \bN$, $g_{n,v}$ is a smooth plurisubharmonic $D$-Green function invariant under $F_\infty$ if $v \in \Sigma_{K,\infty}$ and $g_{n,v}=g_{\mathcal{D}_n,v}$ for every non-archimedean $v \in \Sigma_K$,
\item for every $v \in \Sigma_K$, $(g_{n,v})_{n \in \bN}$ converges uniformly to $g_v$;
\end{enumerate}
 \item \textit{nef} if $\overline{D}$ is semi-positive and if $\inf_{x \in X(\overline{K})} h_{\overline{D}}(x) \geq 0$;
\end{itemize}
\end{defi}

\begin{rema} (1) In the definition of semi-positivity, the condition that $\mathcal{D}_n$ is relatively nef is to be understood with respect to the morphism $\pi_{\mathcal{X}_n} \colon \mathcal{X}_n \rightarrow \mathcal{S}$ associated to the model $\mathcal{X}_n$~: by definition, $\mathcal{D}_n$ is relatively nef if it has non-negative intersection with any curve contained in a fiber of $\pi_{\mathcal{X}_n}$ above a closed point.

(2) When $\overline{D} \in \widehat{\Div}(X)_{\bZ}$ and $K$ is a number field, our definition of semi-positivity coincides with the notion of semi-positive adelic line bundles in the sense of Zhang \cite{Zhangadelic} (see \cite{BMPS}, (1) page 229). In \cite{MoriwakiMAMS}, Moriwaki introduced the more intrinsic notion of \textit{relatively nef} adelic $\bR$-Cartier divisors, which is  closely related to semi-positivity in the sense of Definition \ref{defipositivity}. By \cite[Proposition 4.2.2]{MoriwakiMAMS}, if $\overline{D}\in \widehat{\Div}(X)_{\bR}$ is relatively nef in the sense of \cite[Definition 4.4.1]{MoriwakiMAMS}, then it is semi-positive (see also \cite{BMPS}, (2) and (4) page 229). 
\end{rema}

\begin{example}\label{examplepseffcdf} In the situation of example \ref{examplevolcdf} when $K_0=k(T)$, $\overline{D}$ is big (respectively pseudo-effective) if and only if $\mathcal{D}$ is big (respectively pseudo-effective) in the sense of \cite[section 2.2]{Laz}.
\end{example}

\subsection{Arithmetic Cartier divisors} 

 In this section we define arithmetic $\bK$-Cartier divisors on arithmetic varieties, which are special cases of adelic $\bK$-Cartier divisors for which all the Green functions at the non-archimedean places are induced by the same model.  In this paragraph we assume that $K$ is a number field.  Let $\mathcal{X}$ be a projective arithmetic variety over $\Spec \cO_K$, that is an integral scheme projective and flat over $\Spec \cO_K$. We let $d+1$ be the Krull dimension of $\mathcal{X}$, so that the generic fiber $X := \mathcal{X}\times_{\cO_K} \Spec K$ of $\mathcal{X}$ has dimension $d$.
 We say that $\mathcal{X}$ is generically smooth if $X$ is smooth.
\begin{defi}
 An \textit{arithmetic} $\bK$\textit{-Cartier divisor} $\overline{\mathcal{D}} = (\mathcal{D},(g_v)_{v \in \Sigma_{K,\infty}})$ on $\mathcal{X}$ is a pair  consisting of a $\bK$-Cartier divisor $\mathcal{D} \in \Div(\mathcal{X})_\bK$ and, for each archimedean place $v \in \Sigma_{K,\infty}$, a continuous $\mathcal{D}$-Green function $g_v$ on $X_v^{\mathrm{an}}$ invariant under the complex conjugation $F_\infty$. We say that $\overline{\mathcal{D}}$ if of $C^\infty$-type if $g_v$ is smooth for every $v \in \Sigma_{K,\infty}$. 
\end{defi}  
Arithmetic $\bK$-Cartier divisors on $\mathcal{X}$ form a group, denoted by $\widehat{\Div}(\mathcal{X})_\bK$. Given an arithmetic $\bK$-Cartier divisor $\overline{\mathcal{D}} = (\mathcal{D},(g_v)_{v \in \Sigma_{K,\infty}})$, we denote by $\overline{\mathcal{D}}^{\mathrm{ad}}$ the adelic $\bK$-Cartier divisor on the generic fiber $X$ defined by $(D,(g_v)_{v \in \Sigma_K})$, where $D = \mathcal{D}_{|X}$ is the restriction of $\mathcal{D}$ and for every non-archimedean place $v \in \Sigma_K$, $g_v = g_{\mathcal{D},v}$ is the $D$-Green function induced by $\mathcal{D}$. 
\begin{defi} An arithmetic $\bR$-Cartier divisor $\overline{\mathcal{D}}$ on $\mathcal{X}$ is said to be \textit{big} (respectively \textit{pseudo-effective}) if $\overline{\mathcal{D}}^{\mathrm{ad}}$ is big (respectively pseudo-effective). We say that  $\overline{\mathcal{D}}$ is \textit{semi-positive} if $\mathcal{D}$ is relatively nef and $g_v$ is plurisubharmonic for every $v \in \Sigma_{K,\infty}$, and that $\overline{\mathcal{D}}$ is \textit{nef} if it is semi-positive with $h_{\overline{\mathcal{D}}^{\mathrm{ad}}}(x) \geq 0$ for every $x \in X(\overline{K})$.
\end{defi}
We observe that our definitions of big, pseudo-effective and nef arithmetic $\bR$-divisor coincide with the ones of Ikoma \cite{Ikoma15}. We will also need the notion of ampleness used in \cite{Ikoma15}. We first recall the definition of  the curvature current. Let $\overline{\mathcal{D}} = (\mathcal{D},(g_v)_{v}) \in \widehat{\Div}(\mathcal{X})_\bZ$.  We assume that $\mathcal{X}$ is generically smooth.  For $v \in \Sigma_{K, \infty}$, we denote by $D_v$ the divisor induced by $\mathcal{D}$ on $X_v$ and we consider the current $c_1(\overline{\mathcal{D}}_v) := \frac{\sqrt{-1}}{2\pi}\partial\overline{\partial}[g_v] + \delta_{D_v}$ on $X_v^{\mathrm{an}}$. Note that $c_1(\overline{\mathcal{D}}_v)$ is non-negative if the function $g_v$ is plurisubharmonic, and that  it  is a real $(1,1)$ form if $g_v$ is smooth. We let $c_1(\overline{\mathcal{D}})$ be the current on $X(\bC)$ whose restriction to $X_v^{\mathrm{an}}$ is $c_1(\overline{\mathcal{D}}_v)$ for each $v \in \Sigma_{K,\infty}$. 

The following definition of ampleness is the one used by Ikoma in \cite{Ikoma15}.
\begin{defi} Let $\mathcal{X}$ be a generically smooth arithmetic variety over $\Spec \cO_K$ and let  $\overline{\mathcal{A}} = (\mathcal{A}, (g_v)_{v \in \Sigma_{K,\infty}}) \in \widehat{\Div}(\mathcal{X})_{\bZ}$. We say that $\overline{\mathcal{A}}$ is ample if it satisfies the following conditions~:
\begin{enumerate}
\item $\mathcal{A}$ is ample;
\item for each $v \in \Sigma_{K,\infty}$, $g_v$ is smooth;
\item the curvature form $c_1(\mathcal{\overline{A}})$ is positive pointwise on $X(\bC)$;
\item for $m\gg1$, ${H}^0(\mathcal{X},m\overline{\mathcal{A}})$ is generated as an  $\cO_K$-module by sections $s$ with $\max_{v\in \Sigma_{K,\infty}}\|s\|_{v,\sup} < 1$.
\end{enumerate}
An arithmetic $\bR$-divisor $\overline{\mathcal{D}}$ on $\mathcal{X}$ is \textit{ample} if there exist ample arithmetic $\bZ$-Cartier divisors $\overline{\mathcal{A}}_1, \ldots, \overline{\mathcal{A}}_\ell$ and positive real numbers $a_1, \ldots, a_\ell$ such that
\[\overline{\mathcal{D}} = a_1\overline{\mathcal{A}}_1+ \cdots + a_\ell\overline{\mathcal{A}}_\ell.\]
\end{defi} 

\subsection{Reminder on arithmetic intersection theory}\label{sectionintersection} In this paragraph we recall some properties of the arithmetic intersection product that we will need in the proof of Theorem \ref{thmmainintro}. We assume that $K$ is a number field and let $\mathcal{X}$  be a normal  arithmetic variety over $\Spec \cO_K$.

We say that a divisor $\overline{\mathcal{D}} \in \widehat{\Div}(\mathcal{X})_\bR$ is \textit{integrable} if it belongs to the subgroup of $\widehat{\Div}(\mathcal{X})_\bR$ generated by nef arithmetic $\bR$-Cartier divisors. In particular, a semi-positive arithmetic $\bR$-Cartier divisor is integrable. Let $\overline{\mathcal{D}}_0,\overline{\mathcal{D}}_1, \ldots, \overline{\mathcal{D}}_d$ be integrable arithmetic $\bR$-Cartier divisors on $\mathcal{X}$. In \cite[section 6.4]{MoriwakiZariski}, Moriwaki defined a multilinear intersection product 
\begin{equation}\label{eqintersectproduct}
\widehat{\deg}(\overline{\mathcal{D}}_0 \cdots \overline{\mathcal{D}}_d)
\end{equation}
when $\mathcal{X}$ generically smooth. In \cite{Ikoma15}, Ikoma extended the intersection product \eqref{eqintersectproduct} to the cases where $\mathcal{X}$ is not generically smooth and $\overline{\mathcal{D}}_0$ is not integrable. Let $\overline{\mathcal{D}} \in \widehat{\Div}(\mathcal{X})_\bR$ be an integrable arithmetic $\bR$-Cartier divisor and $\overline{D} = \mathcal{\overline{D}}^{\mathrm{ad}}$ be the induced adelic divisor on $X$. Let $x \in X(\overline{K})$ be a closed point and let $C_x \subset \mathcal{X}$ be the Zariski closure of $x$ in $\mathcal{X}$. Then we have
\begin{equation}\label{eqheightinters}
h_{\overline{D}}(x) = \frac{\widehat{\deg}(\overline{\mathcal{D}}_{|C_x})}{[K(x): K_0]}
\end{equation}
(see \cite[section 5.3]{MoriwakiZariski} for details). When $\mathcal{X}$ is generically smooth and when the divisors $\overline{\mathcal{D}}_0,\overline{\mathcal{D}}_1, \ldots, \overline{\mathcal{D}}_d \in \widehat{\Div}(\mathcal{X})_\bZ$ are of $C^{\infty}$-type, then the product \eqref{eqintersectproduct} coincides with the usual one,  used for example in \cite{Zhangplav, Zhangadelic}.  In particular, for any non-zero function  $\phi \in H^0(\mathcal{X}, \mathcal{D}_d)$ we have  
\begin{multline*}
\widehat{\deg}(\overline{\mathcal{D}}_0 \cdots \overline{\mathcal{D}}_d) = \sum_{i=1}^\ell a_i \widehat{\deg}({\overline{\mathcal{D}}_0}_{|\mathcal{Z}_i} \cdots {\overline{\mathcal{D}}_{d-1}}_{|\mathcal{Z}_i})\\
- \sum_{v \in \Sigma_{K,\infty}} \int_{X_v^{\mathrm{an}}} \ln\|\phi\|_{v} c_{1,v}(\overline{\mathcal{D}}_0) \wedge \cdots c_{1,v}(\overline{\mathcal{D}}_{d-1}),
\end{multline*}
where $(\phi) +{\mathcal{D}}_d  = \sum_{i=1}^\ell a_i \mathcal{Z}_i$ is the decomposition of the Weil divisor $(\phi)+{\mathcal{D}}_d $~: for each $i \in \{1, \ldots, \ell\}$, $a_i \in \bZ_{> 0}$ and $\mathcal{Z}_i \subset \mathcal{X}$ is a codimension $1$ integral subvariety. 

\subsection{Successive minima of an adelic $\bR$-Cartier divisor}\label{sectionZmin}

Let $K$ be global field and let $X$ be a projective, normal and geometrically integral variety on $\Spec K$ of dimension $d \geq 1$. Let $\overline{D} = (D, (g_v)_{v \in \Sigma_K})$ be an adelic $\bR$-Cartier divisor on $X$. For all $\lambda \in \bR$, we denote by $X_{\overline{D}}(\lambda)$ the Zariski-closure in $X$ of the set 
\[\{x \in X(\overline{K}) \ | \ h_{\overline{D}}(x) \leq \lambda\}.\]
For every integer $i \in \{1, \ldots, d +1\}$, we define the $i$\textit{-th minimum} $\zeta_i(\overline{D})$ of $\overline{D}$ by 
\[\zeta_i(\overline{D}) = \inf \{\lambda \in \bR \ | \ \dim X_{\overline{D}}(\lambda) \geq i-1 \}.\]
These invariants were first introduced by Zhang in the number field setting \cite{Zhangplav}.  We call $\Essabs(\overline{D}) = \zeta_1(\overline{D})$ the \textit{absolute minimum} and $\Essmin(\overline{D}) := \zeta_{d+1}(\overline{D})$ the \textit{essential minimum}  of $\overline{D}$. Note that $\Essabs(\overline{D}) = \inf_{x \in X(\overline{K})} h_{\overline{D}}(x)$ and
\[\Essmin(\overline{D}) = \inf \{\lambda \in \bR \ | \ X_{\overline{D}}(\lambda) \text{ is dense} \} = \sup_{Y \subsetneq X}\inf_{x \in X(\overline{K}) \setminus Y} h_{\overline{D}}(x),\]
where the supremum is taken on the Zariski-closed proper subschemes $Y$ of $X$. 

We gather some basic properties of the essential minimum in the following lemma.
\begin{lemma}\label{propertiesEssmin} Let $\overline{D}_1$, $\overline{D}_2$ be two adelic $\bR$-Cartier divisors on $X$.
\begin{enumerate}
\item\label{supaddEssmin} (super-additivity) We have
\[\Essmin(\overline{D}_1+ \overline{D}_2) \geq  \Essmin(\overline{D}_1)+\Essmin( \overline{D}_2).\]
\item\label{nondecreasingEssmin} (non-decreasing) If $\widehat{H}^0(X,t(\overline{D}_1-\overline{D}_2)) \ne \{0\}$ for some real number $t > 0$, then  $\Essmin(\overline{D}_1)\geq\Essmin( \overline{D}_2)$.
\item\label{contEssmin} (continuity) If the underlying Cartier divisor $D_1$ of $\overline{D}_1$ is big, we have
\[ \lim_{t \rightarrow 0} \Essmin(\overline{D}_1 + t \overline{D}_2) = \Essmin(\overline{D_1}). \]
\end{enumerate}
\end{lemma}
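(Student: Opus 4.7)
For part (1), I would use the pointwise additivity $h_{\overline{D}_1+\overline{D}_2}=h_{\overline{D}_1}+h_{\overline{D}_2}$ of heights in the divisor variable. Given $\varepsilon>0$, extract from the definition of $\Essmin$ two proper Zariski-closed subsets $Y_1,Y_2\subsetneq X$ on whose complements $h_{\overline{D}_i}\geq\Essmin(\overline{D}_i)-\varepsilon$; since $Y_1\cup Y_2\subsetneq X$ is still proper closed, one obtains $\Essmin(\overline{D}_1+\overline{D}_2)\geq\Essmin(\overline{D}_1)+\Essmin(\overline{D}_2)-2\varepsilon$, and then lets $\varepsilon\to 0$.

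For part (2), apply Lemma \ref{lemmaheightsupport} to the adelic divisor $t(\overline{D}_1-\overline{D}_2)$ with a nonzero $\phi\in\widehat{H}^0(X,t(\overline{D}_1-\overline{D}_2))$: outside the proper closed subset $Y_\phi:=\Supp(t(D_1-D_2)+(\phi))$ one has $h_{\overline{D}_1}(x)\geq h_{\overline{D}_2}(x)$ (since $t>0$). As the proper closed subsets containing $Y_\phi$ are cofinal in the family appearing in the definition of $\Essmin$, enlarging exceptional loci by $Y_\phi$ yields $\Essmin(\overline{D}_1)\geq\Essmin(\overline{D}_2)$.

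For part (3), the engine is the concavity of $f(t):=\Essmin(\overline{D}_1+t\overline{D}_2)$: super-additivity from (1) together with the positive homogeneity $\Essmin(c\overline{D})=c\Essmin(\overline{D})$ for $c\geq 0$ yields $f(\lambda s+(1-\lambda)u)\geq\lambda f(s)+(1-\lambda)f(u)$ for $\lambda\in[0,1]$. Since $D_1$ is big, $D_1+tD_2$ remains big for $t$ in an open interval $I\ni 0$, so $f$ is defined on $I$ with values in $[-\infty,+\infty)$, and concavity ensures continuity at any interior point where $f$ is finite. To control the behaviour near $0$, I would fix an adelic $\bR$-Cartier divisor $\overline{H}$ with ample underlying divisor such that $\widehat{H}^0(X,m(\overline{H}\pm\overline{D}_2))\neq\{0\}$ for some integer $m\geq 1$, obtained by scaling a sufficiently large multiple of an adelic ample divisor. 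By (2), $\Essmin(\overline{H}\pm\overline{D}_2)\geq 0$. Applying super-additivity to the decompositions
\[\overline{D}_1+t\overline{D}_2=(\overline{D}_1-|t|\overline{H})+|t|(\overline{H}+\mathrm{sgn}(t)\overline{D}_2)\]
and $\overline{D}_1=(\overline{D}_1+t\overline{D}_2-|t|\overline{H})+|t|(\overline{H}-\mathrm{sgn}(t)\overline{D}_2)$ pinches $f(t)$ between $\Essmin(\overline{D}_1-|t|\overline{H})$ and bounds controlled by $\Essmin(\overline{D}_1)$; the continuity of $s\mapsto\Essmin(\overline{D}_1-s\overline{H})$ at $s=0$ (a special case of the same statement, now for the nef perturbation $\overline{H}$) then delivers $\lim_{t\to 0}f(t)=\Essmin(\overline{D}_1)$.

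The main obstacle is the extended real-valued nature of $\Essmin$: when $\Essmin(\overline{D}_1)=-\infty$, continuity requires $f(t)\to-\infty$, which does not follow from concavity alone. The pinching via $\overline{H}$ together with bigness of $D_1$ is precisely what is needed to propagate both finiteness and the $-\infty$ value across a neighborhood of $0$, ruling out a pathological jump of $f$ as $t$ crosses $0$.
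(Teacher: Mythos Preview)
Your arguments for (1) and (2) are correct and essentially the same as the paper's (the paper phrases (2) via super-additivity, writing $\Essmin(\overline{D}_1)\geq\Essmin(\overline{D}_2)+\Essmin(\overline{D}_1-\overline{D}_2)$ and then using Lemma~\ref{lemmaheightsupport} to get $\Essmin(\overline{D}_1-\overline{D}_2)\geq 0$, but this is equivalent to what you do).

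For (3) your route is genuinely different from the paper's, and the concavity idea is a good one, but the execution has a gap. First, the statement ``concavity ensures continuity at any interior point where $f$ is finite'' is false for functions valued in $[-\infty,+\infty)$: the function equal to $0$ at the origin and $-\infty$ elsewhere is concave. What you actually need is that $f$ is \emph{real-valued on a neighbourhood of} $0$, and this follows from an observation you do not make: if $D$ is big then $\Essmin(\overline{D})>-\infty$. Indeed, pick $0\neq\phi\in H^0(X,nD)$ for some $n\geq 1$; then for $x\notin\Supp(nD+(\phi))$ one has $nh_{\overline{D}}(x)\geq -\sum_v n_v(K)\ln^+\|\phi\|_{v,\sup}$, a finite sum by the adelic condition. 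With this in hand, $f$ is a real-valued concave function on an open interval around $0$, hence continuous there, and your $\overline{H}$-pinching detour becomes unnecessary. Second, that detour is circular as written: you reduce continuity of $t\mapsto\Essmin(\overline{D}_1+t\overline{D}_2)$ to continuity of $s\mapsto\Essmin(\overline{D}_1+s\overline{H})$, which is the same statement with $\overline{D}_2$ replaced by $\overline{H}$, and you give no independent argument for the special case (super-additivity alone does not bound $\Essmin(\overline{D}_1+s\overline{H})$ from above without control on $\Essmin(-\overline{H})$, and $-H$ is not big).

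For comparison, the paper avoids the finiteness issue by a different trick: it first translates $\overline{D}_1$ by $\pi^*\overline{\xi}$ so that the \emph{adelic} divisor $\overline{D}_1$ is big (not merely $D_1$), then uses continuity of the arithmetic volume (Theorem~\ref{thmcontvol}) to ensure that $\varepsilon\overline{D}_1\pm t\overline{D}_2$ is big for $|t|$ small, and applies part~(2) to obtain the sandwich $(1-\varepsilon)\Essmin(\overline{D}_1)\leq\Essmin(\overline{D}_1+t\overline{D}_2)\leq(1+\varepsilon)\Essmin(\overline{D}_1)$. Your concavity argument, once repaired with the finiteness observation above, is arguably more elementary since it does not invoke Theorem~\ref{thmcontvol}.
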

\begin{proof}
\eqref{supaddEssmin} Without loss of generality, we assume that $ \Essmin(\overline{D}_1),\Essmin( \overline{D}_2) > -\infty$.  
  Let $\lambda_1 < \Essmin(\overline{D}_1)$ and $\lambda_2 < \Essmin(\overline{D}_2)$ be real numbers. By definition of the essential minimum, there exist non-empty open subsets $U_1$, $U_2$ in $X$ such that for each $i \in \{1,2\}$, 
\[\forall x \in U_i, \ h_{\overline{D}_i}(x) \geq \lambda_i.\]
Since $X$ is integral, the open subset $U = U_1 \cap U_2$ is non-empty. It follows that 
\[\Essmin(\overline{D}_1+ \overline{D}_2) \geq \inf_{x \in U(\overline{K})} (h_{\overline{D}_1}(x) +h_{\overline{D}_2}(x)) \geq \lambda_1 + \lambda_2,\]
and we conclude by letting $\lambda_1$, $\lambda_2$ tend to $\Essmin(\overline{D}_1)$, $\Essmin(\overline{D}_2)$
respectively.

\eqref{nondecreasingEssmin} By \eqref{supaddEssmin}, we have
\[\Essmin(\overline{D}_1) =\Essmin(\overline{D}_2 + \overline{D}_1 - \overline{D}_2) \geq \Essmin(\overline{D}_2) + \Essmin( \overline{D}_1-\overline{D}_2).\]
By Lemma \ref{lemmaheightsupport}, there exists a closed subset $Z_t \subsetneq X$ such that  $th_{\overline{D}_1-\overline{D}_2}(x) = h_{t(\overline{D}_1-\overline{D}_2)}(x) \geq 0 $ for any $x \in X(\overline{K})\setminus Z_t$. Therefore
\[\Essmin( \overline{D}_1-\overline{D}_2) \geq \inf_{x \in X(\overline{K}) \setminus Z_t} h_{\overline{D}_1-\overline{D}_2}(x) \geq 0.\]

\eqref{contEssmin} Let $\pi \colon X \rightarrow \Spec K$ denote the structural morphism. If we replace $\overline{D}_1$ by $\overline{D}_1 + \pi^*\overline{\xi}$ for some adelic $\bR$-Cartier divisor $\overline{\xi}$ on $\Spec K$, both sides of the inequality differ by $\widehat{\deg}(\overline{\xi})$ (see the projection formula for the height \eqref{projformulaheight}). Hence we may assume that $\overline{D}_1$ is big. Let $\varepsilon > 0$ and $t$ be real numbers. By continuity of the volume function (Theorem \ref{thmcontvol}), the divisors
\[(1+\varepsilon)\overline{D}_1 - (\overline{D}_1 + t\overline{D}_2) = \varepsilon\overline{D}_1 - t\overline{D}_2\]
and 
\[(\overline{D}_1 + t\overline{D}_2)-(1-\varepsilon)\overline{D}_1 = \varepsilon\overline{D}_1 + t\overline{D}_2\]
are both big if $t$ is sufficiently close to zero. By \eqref{nondecreasingEssmin}, we have 
\[(1+\varepsilon)\Essmin(\overline{D}_1) \geq \Essmin(\overline{D}_1 + t\overline{D}_2) \geq (1-\varepsilon)\Essmin(\overline{D}_1)\]
and the result follows.
\end{proof}

\begin{lemma}\label{lemmaineqessmin}
 If $\overline{D}$ is pseudo-effective and $D$ is big, then $\Essmin (\overline{D}) \geq 0$.
\end{lemma}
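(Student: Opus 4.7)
The plan is to deduce the lower bound from arithmetic bigness of small perturbations of $\overline{D}$ combined with the continuity property of the essential minimum. First, fix a big adelic $\bR$-Cartier divisor $\overline{A}$ on $X$; such a divisor exists by a standard construction (for instance, equip an ample Cartier divisor $H$ on $X$ with an arbitrary semi-positive adelic structure, then add a sufficiently positive pullback $\pi^*\overline{\xi}$ from $\Spec K$ to ensure positive arithmetic volume). For each $\varepsilon > 0$, the pseudo-effectivity hypothesis applied to the big divisor $\varepsilon \overline{A}$ gives that $\overline{D} + \varepsilon \overline{A}$ is itself big, so $\widehat{\vol}(\overline{D} + \varepsilon \overline{A}) > 0$.

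Next, by the definition of the arithmetic volume, the strict positivity of $\widehat{\vol}(\overline{D} + \varepsilon \overline{A})$ forces $\widehat{H}^0(X, n(\overline{D} + \varepsilon \overline{A})) \ne \{0\}$ for some integer $n \geq 1$. Picking a nonzero section $\phi$ in this space, by construction $\|\phi\|_{v,\sup} \leq 1$ at every place $v \in \Sigma_K$. Lemma \ref{lemmaheightsupport}, applied to the adelic $\bR$-Cartier divisor $n(\overline{D} + \varepsilon \overline{A})$, then yields
\[
h_{n(\overline{D} + \varepsilon \overline{A})}(x) \geq 0 \quad \text{for every } x \in X(\overline{K}) \setminus \Supp\bigl(n(D + \varepsilon A) + (\phi)\bigr).
\]
Since $\Supp(n(D + \varepsilon A) + (\phi))$ is a proper Zariski-closed subset of $X$, dividing by $n$ and using the definition of $\Essmin$ as a supremum over such subsets gives $\Essmin(\overline{D} + \varepsilon \overline{A}) \geq 0$.

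Finally, since $D$ is big by hypothesis, we may invoke the continuity statement of Lemma \ref{propertiesEssmin}\eqref{contEssmin} with $\overline{D}_1 = \overline{D}$ and $\overline{D}_2 = \overline{A}$, obtaining
\[
\Essmin(\overline{D}) \;=\; \lim_{\varepsilon \to 0^+} \Essmin(\overline{D} + \varepsilon \overline{A}) \;\geq\; 0.
\]
The conceptual core of the argument is the bridge between pseudo-effectivity, an asymptotic arithmetic condition, and the existence of a single nonzero small global section, which is mediated by the positivity of $\widehat{\vol}$; the passage from that section to a pointwise height bound is exactly the content of Lemma \ref{lemmaheightsupport}. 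I do not foresee a real obstacle: the only mildly delicate points are the construction of an auxiliary big adelic $\bR$-Cartier divisor $\overline{A}$ and the correct use of the continuity lemma, whose hypothesis (bigness of $D$) is precisely what is assumed.
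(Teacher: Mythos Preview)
Your proof is correct and follows essentially the same approach as the paper: the paper first observes that $\Essmin(\overline{D}) \geq 0$ whenever $\overline{D}$ is big (citing Lemma~\ref{propertiesEssmin}\eqref{nondecreasingEssmin}, whose proof is precisely your small-section argument via Lemma~\ref{lemmaheightsupport}), and then passes to the pseudo-effective case by the continuity statement Lemma~\ref{propertiesEssmin}\eqref{contEssmin}. You have simply unpacked the first step in slightly more detail.
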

\begin{proof}
 If $\overline{D}$ is big, then $\Essmin(\overline{D}) \geq 0$ by Lemma \ref{propertiesEssmin} \eqref{nondecreasingEssmin}. We deduce the general case by continuity (Lemma \ref{propertiesEssmin} \eqref{contEssmin}).
\end{proof}

\section{Pseudo-effectivity and essential minimum}\label{sectionpseudoeff}

 Let $K$ be a global field and let $\mathcal{S} = C_K$ or $\Spec \cO_K$ denote the corresponding scheme defined in section \ref{paragnotationsadelicdiv}. Let $X$ be a projective, normal and geometrically integral variety $X$ on $\Spec K$ and let $\overline{D} =(D,(g_v)_{v \in \Sigma_K})$ be an adelic $\bR$-Cartier divisor on $X$. The purpose of this section is to prove Theorem \ref{thmmainintro} in the introduction. We reproduce the statement below.

\begin{theorem}\label{thmmain}
Suppose that $\overline{D}$ is semi-positive and that $D$ is big. Then $\overline{D}$ is pseudo-effective if and only if $\Essmin(\overline{D}) \geq 0$.
\end{theorem}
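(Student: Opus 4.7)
The direction $\overline{D}$ pseudo-effective $\Rightarrow \Essmin(\overline{D}) \geq 0$ is already established in Lemma \ref{lemmaineqessmin}. For the converse, I would first reduce to the \emph{strict} inequality version: under the same hypotheses, $\Essmin(\overline{D}) > 0$ implies that $\overline{D}$ is pseudo-effective. Granted this, the boundary case $\Essmin(\overline{D}) = 0$ follows by a perturbation argument. Fix once and for all a big semi-positive $\overline{A}_0$ with $\Essmin(\overline{A}_0) > 0$, obtained by equipping an ample $A_0$ with nef metrics and twisting by $\pi^*\overline{\xi}$ for some $\overline{\xi}$ with $\widehat{\deg}(\overline{\xi}) > 0$. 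For any big $\overline{B}$ and small $\varepsilon > 0$, super-additivity of the essential minimum (Lemma \ref{propertiesEssmin}\eqref{supaddEssmin}) yields $\Essmin(\overline{D}+\varepsilon\overline{A}_0) \geq \varepsilon\Essmin(\overline{A}_0) > 0$, and continuity of the volume (Theorem \ref{thmcontvol}) ensures that $\overline{B}-\varepsilon\overline{A}_0$ remains big. Applying the strict version to $\overline{D}+\varepsilon\overline{A}_0$ shows it is pseudo-effective, hence $\overline{D}+\overline{B}=(\overline{D}+\varepsilon\overline{A}_0)+(\overline{B}-\varepsilon\overline{A}_0)$ is big; so $\overline{D}$ is pseudo-effective.

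I would attack the strict version by contraposition, assuming $\overline{D}$ is not pseudo-effective and constructing a Zariski-dense subset of $X(\overline{K})$ on which $h_{\overline{D}}$ is strictly negative, which forces $\Essmin(\overline{D}) \leq 0$. The first step is a reduction to the case where $\overline{D}$ is induced by a single model over $\mathcal{S}$. Semi-positivity furnishes a sequence of normal models $(\mathcal{X}_n,\mathcal{D}_n)$ with $\mathcal{D}_n$ relatively nef and smooth psh Green functions converging uniformly to those of $\overline{D}$; proving the theorem for each such model divisor and taking the limit, via continuity of the volume (so that bigness of $\overline{B}$ is preserved under the small perturbations $\overline{D}-\overline{D}_n$), yields the general statement. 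In the function field case this is essentially the definition. In the number field case one must in addition replace the archimedean Green functions by smooth strictly plurisubharmonic ones, which I would do by invoking Moriwaki's arithmetic Bertini-type theorem \cite{Moriwaki94}; this lifts $\overline{D}$ to an arithmetic $\bR$-Cartier divisor $\overline{\mathcal{D}}$ of $C^{\infty}$-type on a fixed normal arithmetic variety $\pi_{\mathcal{X}}\colon\mathcal{X}\to\Spec\cO_K$, with $\mathcal{D}$ relatively nef and $c_1(\overline{\mathcal{D}})$ pointwise positive at every archimedean place.

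With the model in place, the core of the argument is an application of the duality between the pseudo-effective cone and the movable-curves cone: the Boucksom--Demailly--P\u{a}un--Peternell theorem \cite{BDPP} in the function field case, and Ikoma's arithmetic analogue \cite{Ikoma15} in the number field case. Non-pseudo-effectivity of $\mathcal{D}$ (resp.\ $\overline{\mathcal{D}}$) produces a family of integral curves $(C_t)_{t\in T}$ covering a Zariski-dense subset of $\mathcal{X}$ and satisfying $\mathcal{D}\cdot C_t < 0$ (resp.\ $\widehat{\deg}(\overline{\mathcal{D}}\cdot C_t) < 0$). The relative nefness of $\mathcal{D}$ forces each $C_t$ to be horizontal, since any curve lying in a fiber of $\pi_{\mathcal{X}}$ pairs non-negatively with $\mathcal{D}$. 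Consequently $C_t$ is the Zariski-closure of some closed point $\eta_t\in X(\overline{K})$, and the height-intersection formulas (Example~\ref{exampleheightintersff} and equation~\eqref{eqheightinters}) yield
\[
h_{\overline{D}}(\eta_t) \;=\; \frac{\mathcal{D}\cdot C_t}{[K(\eta_t):K_0]} \;<\; 0,
\]
with the analogous arithmetic expression in the number field case. Density of $\{\eta_t\}_{t\in T}$ in $X$ then forces $\Essmin(\overline{D})\leq 0$, the required contradiction. The main obstacle is the number field case: applying Ikoma's theorem cleanly requires a generically smooth arithmetic model with well-behaved archimedean metrics, and it is there that the arithmetic Bertini approximation and the passage to $C^{\infty}$ psh representatives do the bulk of the technical work.
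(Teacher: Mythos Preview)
Your overall architecture matches the paper's: reduce to a model with relatively nef $\mathcal{D}$, invoke BDPP/Ikoma duality to produce curves with negative (arithmetic) degree, use relative nefness to force horizontality, and read off points of negative height. The function field part and the perturbation to the strict case are fine.

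There is, however, a genuine gap in the number field case, and it stems from a misidentification of the role of Moriwaki's arithmetic Bertini theorem. You invoke \cite{Moriwaki94} in the model-reduction step, to ``replace the archimedean Green functions by smooth strictly plurisubharmonic ones''. That is not what it is for, and no Bertini input is needed there: the approximation by a model with relatively nef $\mathcal{D}$ and smooth plurisubharmonic archimedean Green functions comes straight from the definition of semi-positivity (this is the content of Lemma~\ref{lemmaapproxdiv}). The place where the argument genuinely needs the arithmetic Bertini theorem is \emph{after} Ikoma's theorem, and you have skipped that step. Ikoma's result does not hand you a covering family of curves $C_t$ with $\widehat{\deg}(\overline{\mathcal{D}}_{|C_t})<0$; it only yields a blow-up $\varphi\colon\mathcal{X}'\to\mathcal{X}$ and an ample arithmetic $\bQ$-divisor $\overline{\mathcal{H}}$ on $\mathcal{X}'$ with $\widehat{\deg}(\varphi^*\overline{\mathcal{D}}\cdot\overline{\mathcal{H}}^d)<0$. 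To descend from this top-dimensional inequality to an integral curve not contained in a prescribed closed set $\mathcal{Y}$, the paper uses the arithmetic Bertini theorem to choose a small section $\phi\in H^0(\mathcal{X}',n\mathcal{H})$ whose divisor is generically smooth and meets $\varphi^{-1}(\mathcal{Y})$ properly; expanding the intersection product along $(\phi)+n\mathcal{H}$ and using $\|\phi\|_{v,\sup}\leq 1$ together with semi-positivity of $\varphi^*\overline{\mathcal{D}}$ to control the archimedean integral, one finds an irreducible component $\mathcal{Z}\not\subset\varphi^{-1}(\mathcal{Y})$ with $\widehat{\deg}(\varphi^*\overline{\mathcal{D}}_{|\mathcal{Z}}\cdot\overline{\mathcal{H}}_{|\mathcal{Z}}^{d-1})<0$. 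Iterating $d$ times produces the curve. Without this inductive cutting procedure your sentence ``non-pseudo-effectivity of $\overline{\mathcal{D}}$ produces a family of integral curves $(C_t)$'' is unjustified in the arithmetic setting, and the requirement ``$c_1(\overline{\mathcal{D}})$ pointwise positive'' you impose is neither available nor needed (only semi-positivity of $\overline{\mathcal{D}}$ and ampleness of $\overline{\mathcal{H}}$ are used in the integral estimate).
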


 By Lemma \ref{lemmaineqessmin}, we only need to prove the implication \[\Essmin(\overline{D}) \geq 0 \implies \overline{D} \text{ is pseudo-effective.}\]
 We first reduce the problem to the case $\overline{D} = \overline{\mathcal{D}}^{\mathrm{ad}}$ (subsection \ref{paragfixedmodel})~: we shall see that we may assume the existence of a normal model $(\mathcal{X},\mathcal{D})$ on $\mathcal{S}$ such that for every non-archimedean place $v \in \Sigma_K$, the  $D$-Green function $g_v$ is induced by $\mathcal{D}$. In this case, we can interpret the height of a closed point as an intersection number on $\mathcal{X}$ (see example \ref{exampleheightintersff} and subsection \ref{sectionintersection}). When $\mathcal{S} = C_K$, we will deduce Theorem \ref{thmmain} from a deep theorem of Boucksom--Demailly--P\u{a}un--Peternell \cite{BDPP}, characterizing pseudo-effectivity  in terms of numerical positivity (see Theorem \ref{thmBDPP} below). When $\mathcal{S} = \Spec \cO_K$, we adopt the same strategy and use the arithmetic counterpart of Theorem \ref{thmBDPP}, due to Ikoma \cite{Ikoma15}.  To clarify the exposition, we shall distinguish the cases $\mathcal{S} = C_K$ (subsection \ref{paragffcase}) and $\mathcal{S} = \Spec \cO_K$ (subsection \ref{paragnfcase}). 
 
 Before we start the proof, we mention a useful corollary of Theorem \ref{thmmain}.
 \begin{coro}\label{coromain} 
 Let $\overline{\xi}$ be an adelic $\bR$-Cartier divisor on $\Spec K$ with $\widehat{\deg}(\overline{\xi}) = 1$ and let $\pi \colon X \rightarrow \Spec K$ be the structural morphism.  Suppose that $D$ is big. Then we have
\[\Essmin(\overline{D}) \geq \sup\{t \in \bR \ | \ \overline{D} - t\pi^*\overline{\xi}  \text{ is pseudo-effective}\},\]
with equality if $\overline{D}$ is semi-positive.
 \end{coro}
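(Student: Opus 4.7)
The plan is to deduce this corollary directly from Theorem \ref{thmmain} together with the projection formula for heights and a basic observation about how the essential minimum transforms under shifting by $\pi^*\overline{\xi}$.

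First, I would record the key identity
\[\Essmin(\overline{D} - t\pi^*\overline{\xi}) = \Essmin(\overline{D}) - t\qquad \forall\, t\in\bR.\]
This follows immediately from the projection formula \eqref{projformulaheight}, which gives $h_{\overline{D} - t\pi^*\overline{\xi}}(x) = h_{\overline{D}}(x) - t\,\widehat{\deg}(\overline{\xi}) = h_{\overline{D}}(x) - t$ for every $x \in X(\overline{K})$, and then passing to the infimum over the complements of proper Zariski-closed subsets.

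Next, for the inequality $\geq$: suppose $\overline{D} - t\pi^*\overline{\xi}$ is pseudo-effective. Its underlying Cartier divisor is $D$, which is big by hypothesis, so Lemma \ref{lemmaineqessmin} applies and yields $\Essmin(\overline{D} - t\pi^*\overline{\xi}) \geq 0$. Combined with the identity above, this gives $t \leq \Essmin(\overline{D})$; taking the supremum over all such $t$ produces the desired inequality.

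For the equality statement when $\overline{D}$ is semi-positive, I would first observe that $\overline{D} - t\pi^*\overline{\xi}$ is still semi-positive for every $t \in \bR$. Indeed, $\pi^*\overline{\xi}$ has trivial underlying divisor and each of its Green functions is a constant, so the approximating sequence in Definition \ref{defipositivity} for $\overline{D}$ can be shifted by these same constants to produce an approximating sequence for $\overline{D} - t\pi^*\overline{\xi}$ (the underlying models $\mathcal{D}_n$ are unchanged and hence still relatively nef; constants are smooth and plurisubharmonic). Since $D$ is still big, Theorem \ref{thmmain} applies to $\overline{D} - t\pi^*\overline{\xi}$ and gives the equivalence
\[\overline{D} - t\pi^*\overline{\xi} \text{ is pseudo-effective} \iff \Essmin(\overline{D} - t\pi^*\overline{\xi}) \geq 0 \iff t \leq \Essmin(\overline{D}).\]
Taking the supremum over such $t$ yields equality in the corollary.

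The only nontrivial ingredient is Theorem \ref{thmmain}, which constitutes the main difficulty of the paper; once it is available, the corollary is essentially a bookkeeping argument built on the projection formula and the stability of semi-positivity under constant shifts of Green functions.
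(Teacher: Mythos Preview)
Your proof is correct and follows essentially the same approach as the paper: both use the projection formula to reduce to the shifted divisor $\overline{D}-t\pi^*\overline{\xi}$, invoke Lemma \ref{lemmaineqessmin} for the inequality, and apply Theorem \ref{thmmain} (after noting semi-positivity is preserved under the constant shift) for the equality. Your justification of why $\overline{D}-t\pi^*\overline{\xi}$ remains semi-positive is slightly more explicit than the paper's one-line claim, but the arguments are otherwise the same.
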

 
\begin{proof}
 For $t \in \bR$, we let $\overline{D}_t = \overline{D} - t \pi^*\overline{\xi}$.   By the projection formula for the height \eqref{projformulaheight}, we have $\Essmin(\overline{D}_t) = \Essmin(\overline{D}) - t$. By Lemma \ref{lemmaineqessmin}, $\Essmin(\overline{D}) \geq t$ if $\overline{D}_t$ is pseudo-effective. It follows that
 \begin{equation}\label{ineqEssmin0}
 \Essmin(\overline{D}) \geq \sup\{t \in \bR \ | \ \overline{D}_t  \text{ is pseudo-effective}\}.
 \end{equation}
If moreover $\overline{D}$ is semi-positive, then $\overline{D}_t$ is semi-positive for any $t \in \bR$. By  Theorem \ref{thmmain}, $\overline{D}_t$ is pseudo-effective for any $t \leq \Essmin(\overline{D})$. Hence \eqref{ineqEssmin0} is an equality.
\end{proof}

\subsection{Reduction to a fixed model}\label{paragfixedmodel} 

\begin{lemma}\label{lemmaapproxdiv}
 Assume that $\overline{D} = (D,(g_{v})_{v \in \Sigma_K})$ is semi-positive but not pseudo-effective. Then there exist a normal model $(\mathcal{X}, \mathcal{D})$ of $(X,D)$ on $\mathcal{S}$ together with a collection $(g_{\mathcal{D},v})_{v \in \Sigma_{K,\infty}}$  such that
    \begin{itemize}
    \item $\mathcal{D} $ is relatively nef, 
    \item for all $v \in \Sigma_{K,\infty}$, $g_{\mathcal{D},v}$ is a smooth $D$-Green function invariant by $F_\infty$,
    \item  $\overline{\mathcal{D}} $ is not pseudo-effective,
    \item $ \Essmin(\overline{\mathcal{D}}^{\mathrm{ad}}) > \Essmin (\overline{D}) $, 
    \end{itemize}
where $\overline{\mathcal{D}}^{\mathrm{ad}} \in \widehat{\Div}(X)_\bR$ is the adelic $\bR$-Cartier divisor on $X$ induced by $\overline{\mathcal{D}} = (\mathcal{D}, (g_{\mathcal{D},v})_{v \in \Sigma_{K,\infty}})$.
\end{lemma}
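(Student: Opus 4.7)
The plan is to use the semi-positivity of $\overline{D}$ to approximate it by adelic divisors $\overline{\mathcal{D}}_n^{\mathrm{ad}}$ of the required form---induced by a relatively nef model together with smooth Green functions at infinity---and then to perform a small positive pullback shift from $\Spec K$ so as to force the strict inequality on the essential minima while preserving the failure of pseudo-effectivity.

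By Definition \ref{defipositivity}, semi-positivity provides a sequence $(\mathcal{X}_n, \mathcal{D}_n, (g_{n,v})_{v \in \Sigma_K})$ with $\mathcal{D}_n$ relatively nef on a normal model $\mathcal{X}_n$ of $X$ over $\mathcal{S}$, Green functions $g_{n,v}$ smooth plurisubharmonic and $F_\infty$-invariant at archimedean $v$, $g_{n,v}=g_{\mathcal{D}_n,v}$ at non-archimedean $v$, and $g_{n,v}\to g_v$ uniformly at each place. Setting $\overline{\mathcal{D}}_n:=(\mathcal{D}_n,(g_{n,v})_{v\in\Sigma_{K,\infty}})$, the first two structural conclusions of the lemma are automatic. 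The key step is to show that the pseudo-effective cone is sequentially closed under uniform Green function convergence at fixed underlying Cartier divisor: for any big $\overline{A}$, the decomposition
\[\overline{D}+\overline{A}=\overline{\mathcal{D}}_n^{\mathrm{ad}}+\bigl(\overline{A}+\overline{D}-\overline{\mathcal{D}}_n^{\mathrm{ad}}\bigr)\]
together with a suitable extension of Theorem \ref{thmcontvol} makes the second summand---whose underlying divisor equals that of $\overline{A}$---big for $n$ large. If every $\overline{\mathcal{D}}_n^{\mathrm{ad}}$ were pseudo-effective we would conclude that $\overline{D}+\overline{A}$ is big, contradicting the hypothesis that $\overline{D}$ is not pseudo-effective. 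After extraction we may therefore assume that $\overline{\mathcal{D}}_n^{\mathrm{ad}}$ is not pseudo-effective for every $n$, and the same openness argument shows that any sufficiently small Green function perturbation of $\overline{D}$ still fails to be pseudo-effective.

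Combining this with the height formula yields the uniform bound $|h_{\overline{\mathcal{D}}_n^{\mathrm{ad}}}(x)-h_{\overline{D}}(x)|\leq\varepsilon_n:=\tfrac{1}{2}\sum_{v\in\Sigma_K} n_v(K)\|g_{n,v}-g_v\|_\infty\to 0$ (a finite sum after passing to a common dominating model), whence $\Essmin(\overline{\mathcal{D}}_n^{\mathrm{ad}})\geq\Essmin(\overline{D})-\varepsilon_n$. I would then fix an adelic divisor $\overline{\xi}_0$ on $\Spec K$ with $\widehat{\deg}(\overline{\xi}_0)=1$, chosen so that its pullback to $\mathcal{X}_n$ can be realized either by constants added to the archimedean Green functions (in the number field case) or as the pullback of an $\bR$-Cartier divisor on $C_K$ (in the function field case, where this pullback is vertical and has zero intersection against any vertical curve, so relative nefness is retained). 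Setting $\overline{\mathcal{D}}:=\overline{\mathcal{D}}_n+2\varepsilon_n\widetilde{\xi}_0$, the tilde denoting this lift, for $n$ sufficiently large, all four conclusions of the lemma hold: the first two are inherited from $\overline{\mathcal{D}}_n$ and the choice of $\widetilde{\xi}_0$; the projection formula \eqref{projformulaheight} gives
\[\Essmin(\overline{\mathcal{D}}^{\mathrm{ad}})=\Essmin(\overline{\mathcal{D}}_n^{\mathrm{ad}})+2\varepsilon_n\geq\Essmin(\overline{D})+\varepsilon_n>\Essmin(\overline{D}),\]
and the total perturbation of the Green functions from those of $\overline{D}$ stays small enough, by openness, to keep $\overline{\mathcal{D}}^{\mathrm{ad}}$ non-pseudo-effective. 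The main obstacle is the necessary strengthening of Theorem \ref{thmcontvol} to uniform Green function perturbations at fixed underlying Cartier divisor, which should follow from the explicit Lipschitz dependence of $\widehat{h}^0$ on constant shifts of the norms at each place.
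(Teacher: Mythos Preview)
Your approach is essentially the same as the paper's: take the approximating sequence furnished by semi-positivity, add a small positive pullback from the base to force the strict inequality on essential minima, and use continuity of the arithmetic volume to ensure that non-pseudo-effectivity is preserved. The paper writes the twist as $\varepsilon\,\pi_n^*M$ for an effective divisor $M$ on $\mathcal{S}$ of normalized degree one, and then chooses $\varepsilon$ and $n$ simultaneously at the end so that a single witness $\overline{B}$ of non-pseudo-effectivity survives the perturbation; you instead first extract a subsequence with $\overline{\mathcal{D}}_n^{\mathrm{ad}}$ not pseudo-effective and then twist by $2\varepsilon_n\widetilde{\xi}_0$. Both routes rest on the same continuity input you correctly flag at the end, and which the paper also invokes only implicitly.

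One small gap in your version: tying the twist to $\varepsilon_n=\tfrac12\sum_v n_v(K)\|g_{n,v}-g_v\|_\infty$ breaks down if some $\overline{\mathcal{D}}_n^{\mathrm{ad}}$ happens to coincide with $\overline{D}$ (or more generally if $\varepsilon_n=0$), since then your inequality $\Essmin(\overline{\mathcal{D}}^{\mathrm{ad}})\geq\Essmin(\overline{D})+\varepsilon_n$ is not strict. The paper avoids this by choosing the twist parameter $\varepsilon>0$ independently of $n$ and only afterwards selecting $n\geq n_\varepsilon$; this also means only one application of the openness/continuity argument is needed rather than two. Replacing your $2\varepsilon_n$ by any fixed small $\varepsilon>0$ (or by $2\varepsilon_n+\delta$ for some positive $\delta$ chosen small enough for the openness step) repairs this immediately.
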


\begin{proof} Since $\overline{D}$ is semi-positive, there exists a sequence $(\mathcal{X}_n, \mathcal{D}_n, (g_{n,v})_{v \in \Sigma_K})_{n \in \bN}$ such that~:
\begin{enumerate}
\item for all $n \in \bN$,  $(\mathcal{X}_n, \mathcal{D}_n)$ is a normal model for $(X,D)$ with $\mathcal{D}_n$ relatively nef, 
\item for all $n \in \bN$, $g_{n,v}$ is a smooth plurisubharmonic $D$-Green function invariant under $F_\infty$ if $v \in \Sigma_{K,\infty}$ and $g_{n,v}=g_{\mathcal{D}_n,v}$ for every non-archimedean $v \in \Sigma_K$,
\item\label{condlimit}  $\lim_{n \rightarrow +\infty} \|g_{n,v}-g_v\|_{v, \sup} = 0$ for every $v \in \Sigma_K$.
\end{enumerate}
 For each $n \in \bN$, we denote by $\pi_n \colon \mathcal{X}_n \rightarrow \mathcal{S}$ the morphism given by the model $\mathcal{X}_n$.  Let $M \in \Div(\mathcal{S})_\bR$ be an effective $\bR$-divisor such that $\widehat{\deg}(\overline{\xi}_M)=1$, where $\overline{\xi}_M$ is the adelic $\bR$-Cartier divisor on $\Spec K$ induced by $M$. Let $\varepsilon > 0$ be a real number. We let $\mathcal{D}_{n,\varepsilon}:= \mathcal{D}_n+\varepsilon \pi^*_nM$, $\overline{\mathcal{D}}_{n,\varepsilon} = (\mathcal{D}_{n,\varepsilon}, (g_{n,v})_{v \in \Sigma_{K,\infty}})$ and $\overline{\mathcal{D}}_{n} = (\mathcal{D}_{n}, (g_{n,v})_{v \in \Sigma_{K,\infty}})$.   
 
  By \eqref{condlimit} above and by  the projection formula \eqref{projformulaheight}, there exists an integer $n_{\varepsilon} \geq 1$ such that
 \[h_{\overline{\mathcal{D}}^{\mathrm{ad}}_{n,\varepsilon}}(x) = h_{\overline{\mathcal{D}}^{\mathrm{ad}}_n}(x) + \varepsilon \geq h_{\overline{D}}(x) + \varepsilon/2\] 
 for every $x \in X(\overline{K})$ and $n \geq n_{\varepsilon}$. It follows that $\Essmin(\overline{\mathcal{D}}^{\mathrm{ad}}_{n,\varepsilon})> \Essmin (\overline{D})$ for all $\varepsilon > 0$ and $n \geq n_{\varepsilon}$.  Moreover $\mathcal{D}_{n,\varepsilon}$ is relatively nef for all  $n \in \bN$ and $\varepsilon > 0$. It remains to check that we can choose  $\varepsilon$ and $n \geq n_{\varepsilon}$ so that $\overline{\mathcal{D}}_{n,\varepsilon}$ is not pseudo-effective. Since $\overline{D}$ is not pseudo-effective, there exists a big adelic $\bR$-divisor $\overline{B}$ on $X$ such that $\overline{D} + \overline{B}$ is not big. For all $n \in \bN$, $\varepsilon > 0$ and $v \in \Sigma_K$, we let $s_{v,n,\varepsilon} = \|g_{\mathcal{D}_{n,\varepsilon},v}-g_v\|_{v,\sup}$. By construction, we can choose $n$ and $\varepsilon$ so that $\max_{v \in \Sigma_K}s_{v,n,\varepsilon}$ is arbitrarily small. By continuity of $\widehat{\vol}$ (Theorem \ref{thmcontvol}), it follows that there exist  $\varepsilon_0>0$ and $m \geq n_{\varepsilon_0}$ such that $\overline{B}_{m, \varepsilon_0} := \overline{B} - (0, (s_{v,m,\varepsilon_0})_{v \in \Sigma_K}))$ is big. If $\overline{\mathcal{D}}_{m, \varepsilon_0}$ is pseudo-effective, then 
\[\widehat{\vol}(\overline{D}+\overline{B}) \geq  \widehat{\vol}(\overline{\mathcal{D}}^{\mathrm{ad}}_{m,\varepsilon_0} + \overline{B}_{m, \varepsilon_0})>0,\]
which is absurd. Hence $ \overline{\mathcal{D}}_{m, \varepsilon_0}$ is not pseudo-effective, and we obtain the lemma with $\overline{\mathcal{D}} = \overline{\mathcal{D}}_{m, \varepsilon_0}$.
\end{proof}

\subsection{The function field case}\label{paragffcase} The goal of this section is to prove Theorem \ref{thmmain} when $K$ is a function field, i.e.\ when $\mathcal{S} = C_K$ is a regular integral projective curve on a field $k$. We will see that in this case, Theorem \ref{thmmain} is a consequence of the following  theorem of Boucksom, Demailly, P\u{a}un and Perternell, describing the dual of the pseudo-effective cone of a projective variety. 
\begin{theorem}[Boucksom--Demailly--P\u{a}un--Peternell \cite{BDPP}]\label{thmBDPP}
 Let $Z$ be an integral projective variety over a field and let $\ell = \dim Z -1$. A divisor $E$ on $Z$ is pseudo-effective if and only if for every birational morphism $\varphi \colon Z' \rightarrow Z$ and for every ample divisors $A_1, \ldots , A_\ell$ on $\mathcal{X'}$, 
 \[E \cdot \varphi_*(A_1 \cdots A_\ell) \geq 0,\]
 where $\varphi_*$ denotes the proper pushforward of cycles. 
\end{theorem}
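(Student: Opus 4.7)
The plan is to establish the two directions of this equivalence separately: the forward implication is elementary, while the reverse implication is the substantive content of Boucksom--Demailly--P\u{a}un--Peternell's theorem, which I would ultimately cite rather than reprove in full.

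\textbf{Forward direction.} Suppose $E$ is pseudo-effective. Given a birational morphism $\varphi \colon Z' \to Z$ and ample divisors $A_1, \ldots, A_\ell$ on $Z'$, I would first observe (via Bertini applied to sufficiently divisible multiples of the $A_i$) that the class $A_1 \cdots A_\ell \in N_1(Z')_\bR$ is represented by an irreducible complete-intersection curve which moves in an algebraic family whose general member covers a dense open subset of $Z'$. Pushing forward under $\varphi$, one gets a covering family of curves on $Z$, so $\varphi_*(A_1 \cdots A_\ell)$ is a \emph{movable} curve class. Writing $E$ as a numerical limit of effective $\bR$-divisors $D_k$ (for instance, $E + \varepsilon A$ is effective modulo numerical equivalence for $\varepsilon > 0$ and $A$ ample), we can choose a generic member of the covering family that is not contained in the support of any $D_k$, which gives $D_k \cdot \varphi_*(A_1 \cdots A_\ell) \geq 0$. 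Passing to the limit yields the claimed inequality.

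\textbf{Reverse direction.} This is the heart of BDPP. I would argue by contrapositive: assuming $E$ is not pseudo-effective, the task is to produce a birational morphism $\varphi \colon Z' \to Z$ and ample divisors $A_1, \ldots, A_\ell$ on $Z'$ with $E \cdot \varphi_*(A_1 \cdots A_\ell) < 0$. The strategy rests on identifying the dual of the closed pseudo-effective cone $\overline{\mathrm{Psef}}(Z) \subset N^1(Z)_\bR$, under the intersection pairing, with the closure $\overline{\mathrm{SME}}(Z) \subset N_1(Z)_\bR$ of the convex cone generated by the strongly movable classes $\varphi_*(A_1 \cdots A_\ell)$. Granting this duality, a Hahn--Banach separation in the finite-dimensional Néron--Severi space produces a strongly movable class $\alpha$ with $E \cdot \alpha < 0$, and approximating $\alpha$ by an actual class of the prescribed form then gives the contradiction with the assumed non-negativity.

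\textbf{Main obstacle.} The cone-duality statement $\overline{\mathrm{SME}}(Z) = \overline{\mathrm{Psef}}(Z)^\vee$ is where all the work is concentrated. In the original BDPP argument, valid for complex projective manifolds, one attaches to any pseudo-effective $(1,1)$-class a \emph{mobile intersection product} via Fujita approximation and Boucksom's divisorial Zariski decomposition, and then establishes a mass concentration / orthogonality estimate for Monge--Amp\`ere operators that forces any class in the dual of $\overline{\mathrm{Psef}}(Z)$ to be a limit of strongly movable classes. This analytic Monge--Amp\`ere input is the genuine difficulty of the theorem. For varieties over arbitrary fields, one appeals to subsequent algebraic extensions (based on positive intersection products, asymptotic restricted volumes, and algebraic substitutes for the orthogonality estimate) that follow the same cone-duality template. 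In keeping with the use of this result as a black box in what follows, I would simply cite \cite{BDPP} (together with the necessary extensions in positive characteristic) after recording the elementary forward direction and the Hahn--Banach reduction described above.
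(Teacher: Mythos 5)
Your proposal is correct and matches what the paper does: the theorem is used as a black box, with the substantive content (the cone duality between pseudo-effective divisors and movable curve classes) delegated to \cite{BDPP} and its algebraic extensions. The paper's only added content is the precise attribution for arbitrary base fields — namely that the algebraic proof in Lazarsfeld's book hinges on the asymptotic orthogonality of Fujita approximations, which holds over any field by Cutkosky's work (as observed by Das in the algebraically closed case) — which is exactly the "algebraic substitute for the orthogonality estimate" you identify as the needed extension.
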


 In  \cite[Theorem 2.2]{BDPP},   this theorem is stated and proved over $\bC$. Recently, Das \cite{Das} observed that the proof given by Lazarsfeld in \cite[section 11.4.C]{LazII} (following \cite{BDPP}) works over an algebraically closed field of arbitrary characteristic. By the work of Cutkosky \cite{Cutkovsky}, it turns out that the proof given in \cite{LazII} remains valid without any assumption on the base field. More precisely, the key point in the proof of \cite[Theorem 11.4.19]{LazII} is the ``asymptotic orthogonality of Fujita approximation" \cite[Theorem 11.4.21]{LazII}. The latter holds for any field $k$ by \cite[Corollary 5.5]{Cutkovsky}\footnote{In \cite[Corollary 3.13]{Cutkovsky},  it follows immediately from the proof that one can replace the constant $C$ by $C'\omega^d$, where $C'$ is a numerical constant. This observation leads to a slightly more explicit statement in \cite[Corollary 5.5]{Cutkovsky}, which is exactly  \cite[Theorem 11.4.21]{LazII} for an arbitrary field $k$.}. The rest of the proof of \cite[Theorem 11.4.19]{LazII} makes no use of the algebraically closed nor characteristic zero assumption on $k$.

We will use the following classical consequence of Theorem \ref{thmBDPP} (see  \cite[Lemma 11.4.18]{LazII}).
\begin{coro}\label{coroBDPP} Let $Z$ be an integral projective variety over a field and let $E$ be a divisor  $Z$ which is not pseudo-effective. Then for any Zariski-closed proper subscheme $Y \subsetneq Z$, there exists an integral curve $C \subset Z$ not contained in $Y$ such that $E\cdot C <0$.
\end{coro}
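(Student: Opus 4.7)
The plan is to deduce the corollary from Theorem \ref{thmBDPP} by realizing the ``test cycle'' provided by that theorem as an effective $1$-cycle no component of which lies in $\varphi^{-1}(Y)$, and then pushing forward. Since $E$ is not pseudo-effective, Theorem \ref{thmBDPP} furnishes a birational morphism $\varphi \colon Z' \rightarrow Z$ and ample divisors $A_1, \ldots, A_\ell$ on $Z'$ such that $E \cdot \varphi_*(A_1 \cdots A_\ell) < 0$. By multilinearity of the intersection pairing, replacing each $A_i$ by a sufficiently divisible positive multiple preserves the sign of this intersection number, so I may and do assume that every $A_i$ is very ample.

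Next I would choose, for $i = 1, \ldots, \ell$, a general member $A_i'' \in |A_i|$ and form the effective $1$-cycle
\[ C' := A_1'' \cap \cdots \cap A_\ell'' \]
on $Z'$, whose class equals $A_1 \cdots A_\ell$; in particular $E \cdot \varphi_*(C') < 0$. The crucial genericity input is a set-theoretic Bertini-type dimension estimate: since $W := \varphi^{-1}(Y)$ is a proper closed subset of $Z'$ of dimension at most $\ell = \dim Z' - 1$, successive general hyperplane sections cut down the dimension by exactly one at each step, so that $\dim(C' \cap W) \leq 0$. As every irreducible component of $C'$ has dimension $1$, no such component is contained in $\varphi^{-1}(Y)$.

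To conclude, write $C' = \sum_j n_j C_j'$ with $n_j > 0$ and $C_j'$ integral. The inequality $\sum_j n_j\, E \cdot \varphi_*(C_j') = E \cdot \varphi_*(C') < 0$ forces the existence of an index $j_0$ with $E \cdot \varphi_*(C_{j_0}') < 0$. Then $C_{j_0}'$ cannot be contracted by $\varphi$ (else $\varphi_*(C_{j_0}') = 0$), so $C := \varphi(C_{j_0}')$ is an integral curve in $Z$ with $\varphi_*(C_{j_0}') = [K(C_{j_0}') : K(C)] \cdot C$, giving $E \cdot C < 0$; and $C \not\subset Y$ because $C_{j_0}' \not\subset \varphi^{-1}(Y)$.

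The main technical point is the dimension estimate in the second paragraph, which must be carried out without smoothness assumptions on $Z'$ or on the base field; this is where I rely on the set-theoretic form of Bertini for very ample linear systems (avoiding the finitely many components of $W$ at each step), rather than the smooth Bertini theorem which would require characteristic zero.
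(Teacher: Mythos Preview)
Your argument is correct and is precisely the standard proof: the paper itself does not give a proof of this corollary but refers to \cite[Lemma 11.4.18]{LazII}, where the same construction (pass to very ample multiples, take a general complete intersection avoiding $\varphi^{-1}(Y)$ via set-theoretic Bertini, push forward and pick a bad component) is carried out. Your care about using only the dimension-count form of Bertini, valid over any field, is exactly what is needed here and mirrors the iterative slicing argument the paper later uses in the arithmetic setting.
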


We are now ready to prove Theorem \ref{thmmain} when $K$ is a function field. 

\begin{proof}[Proof of Theorem \ref{thmmain} ($K = k(T)$)] By Lemma \ref{lemmaineqessmin}, it suffices to prove the implication \[\Essmin(\overline{D}) \geq 0 \implies \overline{D} \text{ is pseudo-effective.}\]
 Assume that $\overline{D}$ is not pseudo-effective. By Lemma \ref{lemmaapproxdiv} (see also example \ref{examplepseffcdf}), there exists a normal model $(\mathcal{X},\pi_{\mathcal{X}})$ of $X$ on $C_K$ and a $\bR$-Cartier divisor $\mathcal{D}$ on $\mathcal{X}$ such that 
    \begin{enumerate}
    \item $\mathcal{D} $ is relatively nef,
    \item  $\mathcal{D} $ is not pseudo-effective,
    \item $ \Essmin(\overline{\mathcal{D}}^{\mathrm{ad}}) > \Essmin (\overline{D})$.
    \end{enumerate}
We only need to show that $\Essmin(\overline{\mathcal{D}}^{\mathrm{ad}}) \leq 0$.  Let $\mathcal{Y} \subsetneq \mathcal{X}$ be a proper closed subscheme. By Corollary \ref{coroBDPP}, there exists an integral curve $C_{\mathcal{Y}} \in \mathcal{X}$ not contained in $\mathcal{Y}$ such that $\mathcal{D} \cdot C_{\mathcal{Y}} < 0$. Since $\mathcal{D}$ is relatively nef, we have 
 $\mathcal{D}   \cdot C \geq 0$ for any curve contained in a fiber of $\pi_{\mathcal{X}}$ above a closed point. It follows that the curve $C_{\mathcal{Y}}$ is horizontal, in the sense that the restriction ${\pi_{\mathcal{X}}}_{|C_{\mathcal{Y}}}$ is a surjective morphism.  The generic point $P_{\mathcal{Y}} \in X(\overline{K})$ of $C_{\mathcal{Y}}$ satisfies 
\[h_{\overline{\mathcal{D}}^{\mathrm{ad}}}(P_{\mathcal{Y}}) = \frac{\mathcal{D}\cdot C_{\mathcal{Y}}}{[k(C_{\mathcal{Y}}) : K_0]} < 0.\]
 We infer that 
\[\sup_{\mathcal{Y} \varsubsetneq \mathcal{X}} \inf\{h_{\overline{\mathcal{D}}^{\mathrm{ad}}}(P) \ | \ P \in X(\overline{K}), \ C_P \nsubseteq \mathcal{Y}\} \leq 0,\]
where for any $P \in X(\overline{K})$, $C_P$ denotes the Zariski-closure of $P$ in $\mathcal{X}$. Now the supremum on the left hand-side is the essential minimum $\Essmin(\overline{\mathcal{D}}^{\mathrm{ad}})$, so the theorem is proved.
\end{proof}

\subsection{The number field case}\label{paragnfcase}

 We now assume that $K$ is a number field, i.e.\ $\mathcal{S} = \Spec \cO_K$. The following arithmetic analogue of Theorem \ref{thmBDPP} is due to Ikoma.

\begin{theorem}[\cite{Ikoma15}, Theorem 6.4]\label{thmIkoma} Let $\mathcal{X}$ be a normal projective arithmetic variety on $\Spec \cO_K$ of dimension $d+1$. Let $\overline{\mathcal{D}} = (\mathcal{D}, (g_v)_{v \in \Sigma_{K,\infty}})$ be an arithmetic $\bR$-divisor on $\mathcal{X}$. The following are equivalent. 
\begin{enumerate}
\item $\overline{\mathcal{D}}$ is pseudo-effective.
\item For any blowing-up $\varphi \colon \mathcal{X}' \rightarrow \mathcal{X}$ such that $\mathcal{X}'$ is normal and generically smooth, and for any ample arithmetic $\bQ$-divisor  $\overline{\mathcal{H}}$ on $\mathcal{X}'$, we have
\[\widehat{\deg}(\varphi^*\overline{\mathcal{D}} \cdot \overline{\mathcal{H}}^d) \geq 0.\]
\end{enumerate}
\end{theorem}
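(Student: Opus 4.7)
The plan is to adapt the proof strategy of the geometric Boucksom--Demailly--P\u{a}un--Peternell duality (Theorem \ref{thmBDPP}) to the arithmetic setting. The implication (1) $\Rightarrow$ (2) is the easier direction, which I would treat first. Assume $\overline{\mathcal{D}}$ is pseudo-effective and let $\varphi \colon \mathcal{X}' \to \mathcal{X}$ be a blow-up with $\mathcal{X}'$ normal and generically smooth, and let $\overline{\mathcal{H}}$ be an ample arithmetic $\bQ$-divisor on $\mathcal{X}'$. For every $\varepsilon > 0$, the divisor $\varphi^*\overline{\mathcal{D}} + \varepsilon \overline{\mathcal{H}}$ is big, since pseudo-effective plus ample is big. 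For a big arithmetic divisor intersected with a product of $d$ ample arithmetic divisors, positivity of the arithmetic intersection number follows from standard properties of arithmetic intersection theory (see \cite{Zhangadelic, MoriwakiZariski}). Letting $\varepsilon \to 0$ and using continuity of the intersection product then yields the desired inequality for $\widehat{\deg}(\varphi^*\overline{\mathcal{D}} \cdot \overline{\mathcal{H}}^d)$.

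The harder direction (2) $\Rightarrow$ (1) I would establish by contraposition: if $\overline{\mathcal{D}}$ is not pseudo-effective, construct a blow-up $\varphi \colon \mathcal{X}' \to \mathcal{X}$ and an ample arithmetic $\bQ$-divisor $\overline{\mathcal{H}}$ on $\mathcal{X}'$ such that $\widehat{\deg}(\varphi^*\overline{\mathcal{D}} \cdot \overline{\mathcal{H}}^d) < 0$. I would model the argument on Lazarsfeld's exposition of the geometric case in \cite[section 11.4.C]{LazII}. The two main ingredients needed on the arithmetic side are: (a) an \emph{arithmetic Fujita approximation} theorem, allowing the arithmetic volume of a big arithmetic divisor to be approximated arbitrarily well by ample classes on blow-ups; and (b) an \emph{arithmetic asymptotic orthogonality} statement, playing the role of \cite[Theorem 11.4.21]{LazII}, which controls the intersection between the ``positive'' and ``negative'' parts in such an approximation. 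With these in hand, one can then construct a divisorial Zariski-type decomposition and extract a movable class in the dual cone that witnesses the failure of pseudo-effectivity, which in turn yields the desired ample $\overline{\mathcal{H}}$ on some blow-up by taking Fujita approximations.

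The main obstacle is establishing the asymptotic orthogonality (b) in the arithmetic framework. The geometric proof goes through Siu's theorem and holomorphic Morse inequalities, which do not transfer directly; instead one uses arithmetic Siu-type inequalities (Yuan \cite{Yuanbig}) and careful volume estimates on blow-ups. Once orthogonality is established, the rest of the argument is formal: it reproduces the scheme of \cite[Theorem 11.4.19]{LazII} verbatim at the level of intersection numbers, replacing geometric volumes by arithmetic volumes and using continuity of $\widehat{\vol}$ (Theorem \ref{thmcontvol}) where the geometric proof uses continuity of $\vol$. I expect the technical bulk of the work to lie in (a) and (b); the deduction of the duality equivalence from these building blocks should be a straightforward transcription of the BDPP scheme.
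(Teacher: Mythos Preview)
The paper does not prove this statement: it is quoted verbatim as \cite[Theorem 6.4]{Ikoma15} and used as a black box in the proof of Theorem~\ref{thmmain}. There is therefore no proof in the paper to compare your proposal against.

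That said, your outline is a reasonable sketch of how Ikoma's argument actually proceeds. The easy direction (1)$\Rightarrow$(2) is indeed continuity plus positivity of intersections with ample classes. For (2)$\Rightarrow$(1), Ikoma's proof does rely on arithmetic Fujita approximation (Chen, Yuan) and an arithmetic orthogonality estimate, assembled along the lines of the BDPP scheme you describe. One caveat: your phrase ``extract a movable class in the dual cone'' is vague in the arithmetic setting, since there is no established arithmetic movable cone; what Ikoma actually does is work directly with the approximating ample classes on blow-ups and a limiting argument, rather than passing through an intermediate ``movable'' formalism. If you intend to write this out in full, you should consult \cite{Ikoma15} directly rather than reconstruct it, as the details (normality and generic smoothness hypotheses, compatibility of the various intersection products under pullback) require care that your sketch elides.
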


We shall derive Theorem \ref{thmmain} from Ikoma's theorem by applying a method similar to the one we used in the function field case.  In order to deduce an analogue of Corollary \ref{coroBDPP} from Theorem \ref{thmIkoma}, we use an arithmetic Bertini-type theorem due to Moriwaki \cite[Theorem 5.3]{Moriwaki94}. The latter was extended by Ikoma \cite{IkomaBertini}, and more recently by Charles \cite{Charles}.

\begin{proof}[Proof of Theorem \ref{thmmain}] By Lemma \ref{lemmaineqessmin}, it is sufficient to assume that $\overline{D}$ is not pseudo-effective and to show that $\Essmin(\overline{D}) < 0$. By Lemma \ref{lemmaapproxdiv}, there exists a normal projective arithmetic variety $\mathcal{X}$ on $\Spec \cO_K$ and an arithmetic $\bR$-Cartier divisor $\overline{\mathcal{D}} = (\mathcal{D},(g_{v})_{v \in \Sigma_{K,\infty}})$ such that~:
\begin{enumerate} 
    \item $\overline{\mathcal{D}} $ is semi-positive of $C^\infty$-type,
    \item $\overline{\mathcal{D}} $ is not pseudo-effective,
    \item $ \Essmin(\overline{\mathcal{D}}^{\mathrm{ad}}) > \Essmin (\overline{D}) $.
\end{enumerate}
It is enough to show that $\Essmin(\overline{\mathcal{D}}^{\mathrm{ad}}) \leq 0$. By linearity and continuity of the essential minimum (Lemma \ref{propertiesEssmin} \eqref{contEssmin}), we may assume that $\overline{\mathcal{D}}^{\mathrm{ad}} \in \widehat{\Div}(\mathcal{X})_{\bZ}$.  Since $\overline{\mathcal{D}} $ is not pseudo-effective,  by Theorem \ref{thmIkoma} there exists a birational morphism $\varphi \colon \mathcal{X}' \rightarrow \mathcal{X}$, with $\mathcal{X}'$ normal and generically smooth, and an ample arithmetic $\bQ$-divisor $\overline{\mathcal{H}}$ on $\mathcal{X}'$ such that
\[\widehat{\deg}(\varphi^*\overline{\mathcal{D}} \cdot \overline{\mathcal{H}}^d) <0,\]
where $d = \dim X$. Without loss of generality, we assume that $\mathcal{H} \in \Div(\mathcal{X})_{\bZ}$. Let $\mathcal{Y} \subsetneq \mathcal{X}$ be a closed subscheme. Since $\overline{\mathcal{H}}$ is ample, \cite[Theorem A]{IkomaBertini} implies the existence of an integer $n\geq 1$  and a non-zero function $\phi \in H(\mathcal{X}',n\mathcal{H})$ with $\max_{v \in \Sigma_{K,\infty}}\|\phi\|_{v,\sup} <1$  such that the divisor $(\phi)+n\mathcal{H}$ is generically smooth and intersects $\mathcal{Y}':=\varphi^{-1}(\mathcal{Y})$ properly. We write
\[(\phi) + n\mathcal{H}= \sum_{i=1}^\ell a_i \mathcal{Z}_i\]
the decomposition of $(\phi) +n\mathcal{H}$ as a cycle~: for each $i \in \{1,\ldots, \ell\}$, $a_i > 0$ is an integer and $\mathcal{Z}_i \subset \mathcal{X}'$ is a smooth subvariety of codimension $1$ not contained in $\mathcal{Y}'$. We have 
\begin{multline}\label{ineqintersection}
\widehat{\deg}(\varphi^*\overline{\mathcal{D}} \cdot (n\overline{\mathcal{H}})^d)  = \sum_{i=1}^\ell a_i\widehat{\deg}(\overline{\mathcal{D}}_{|\mathcal{Z}_i} \cdot (n\overline{\mathcal{H}}_{|\mathcal{Z}_i})^{d-1})\\ -\sum_{v \in \Sigma_{K,\infty}}\int_{X_v^{\mathrm{an}}} \ln \|\phi\|_v c_{1,v}(\varphi^*\overline{\mathcal{D}})\wedge c_{1,v}(n\overline{\mathcal{H}})^{d-1}.
\end{multline}
Since $\varphi^*\overline{\mathcal{D}}$ is semi-positive and $\overline{\mathcal{H}}$ is ample,  the current $c_{1,v}(\varphi^*\overline{\mathcal{D}})\wedge c_{1,v}(n\overline{\mathcal{H}})^{d-1}$ is non-negative for each $v \in \Sigma_{K,\infty}$.  On the other hand $\|\phi\|_{v, \sup} \leq 1$, hence we have 
\[ -\int_{X_v^{\mathrm{an}}} \ln \|\phi\|_v c_{1,v}(\varphi^*\overline{\mathcal{D}})\wedge c_{1,v}(\overline{n\mathcal{H}})^{d-1} \geq 0\]
for each $v \in \Sigma_{K,\infty}$. It follows from \eqref{ineqintersection} that
\[ \sum_{i=1}^\ell a_i\widehat{\deg}(\overline{\mathcal{D}}_{|\mathcal{Z}_i} \cdot (n\overline{\mathcal{H}}_{|\mathcal{Z}_i})^{d-1}) \leq   \widehat{\deg}(\varphi^*\overline{\mathcal{D}} \cdot (n\overline{\mathcal{H}})^d)  = n^{d}\widehat{\deg}(\varphi^*\overline{\mathcal{D}} \cdot \overline{\mathcal{H}}^d) < 0.\]
Since $a_i > 0$ for all $1 \leq i \leq \ell$, there exists a codimension $1$ generically smooth subvariety $\mathcal{Z} \subset \mathcal{X}'$ not contained in $\mathcal{Y}'$ such that 
\[\widehat{\deg}(\varphi^*\overline{\mathcal{D}}_{|\mathcal{Z}} \cdot \overline{\mathcal{H}}_{|\mathcal{Z}}^{d-1})<0.\]
Since $\overline{\mathcal{H}}_{|\mathcal{Z}}$ is ample, we can iterate this process. Eventually we obtain an integral curve $\mathcal{C}' \subset \mathcal{X}'$ not contained in $\mathcal{Y}'$ such that 
\[\widehat{\deg}(\varphi^*\overline{\mathcal{D}}_{|\mathcal{C}'}) < 0.\]
 We denote by $\pi_{\mathcal{X}} \colon \mathcal{X} \rightarrow \mathcal{S} = \Spec \cO_K$ the morphism defining the model $\mathcal{X}$, and we let $\pi_{\mathcal{X}'} =  \pi_{\mathcal{X}}\circ\varphi$. Assume that $\pi_{\mathcal{X}'}(\mathcal{C}') \in \mathcal{S}$ is a closed point, and let $p$ be its image in $\Spec \bZ$. In this case we have 
\[\widehat{\deg}(\varphi^*\overline{\mathcal{D}}_{|\mathcal{C}'}) = \ln(p)\deg(\varphi^*\mathcal{D} \cdot \mathcal{C}') < 0,\]
 which is absurd since $\varphi^*\mathcal{D}$ is relatively nef. It follows that $\mathcal{C}'$ is horizontal, i.e.\ $\pi_{\mathcal{X}'}(\mathcal{C}') = \mathcal{S}$. The image $\varphi(\eta_{\mathcal{C}'})$ of the generic point of $\mathcal{C}'$ by $\varphi$ defines a closed point $x \in X(\overline{K})\setminus \mathcal{Y}$ of height $h_{\overline{\mathcal{D}}^{\mathrm{ad}}}(x) < 0$. It follows that 
 \[\inf \{h_{\overline{\mathcal{D}}^{\mathrm{ad}}}(x) \ | \ x \in X(\overline{K}),\ x \notin \mathcal{Y}\} < 0.\]
 By taking the supremum on the Zariski-closed proper subschemes $\mathcal{Y} \subset \mathcal{X}$ we find $\Essmin(\overline{\mathcal{D}}^{\mathrm{ad}}) \leq 0$, which concludes the proof. 
\end{proof}

\section{Asymptotic slopes and proof of Conjecture \ref{conjChen}}\label{sectionasyslopes}
We recall definitions and known facts about asymptotic slopes in subsection \ref{sectiondefasy}.  These invariants were first introduced by Chen in \cite{Chenpolygone} for arithmetic divisors (see also \cite{Chenfujita}), and extended to adelic $\bR$-divisors by Chen and Moriwaki \cite{ChenMoriwaki, CMadeliccurves}. In subsection \ref{sectionproofconj} we prove Theorem \ref{thmmain2intro}. Let $K$ be a global field  and let $\pi \colon X \rightarrow \Spec K$ be a projective, normal and geometrically integral variety of dimension $d := \dim X$.  We fix a place $v_0$ of $K_0$, and we assume for simplicity that $v_0 = \infty$ if $K_0 = \bQ$.

\subsection{Definitions and properties}\label{sectiondefasy}
 
  Let $\overline{D} \in \widehat{\Div}(X)_\bR$.  For any real number $t$ and any integer $n \geq 1$, we consider the $K$-linear subspace $V_n^{t}$  of $V_n =  H^0(X,nD)$ generated by functions $\phi \in V_n$ such that for each place $v \in \Sigma_K$,
 \[\|\phi\|_{v,\sup} \leq \left\{\begin{tabular}{ll}
 $e^{-t}$ & if $v | v_0$,\\
 $1$ & otherwise.
 \end{tabular}\right.\]
  We let 
\[\lambda_{\max,n}(\overline{D}) := \sup \{t \in \bR \ | \ V_n^{t} \ne 0\}
\text{ and }
\lambda_{\min,n}(\overline{D}) := \sup \{t \in \bR \ | \ V_n^{t} = V_n\}.\]

\begin{example} (1) Suppose that $K$ is  a number field and that $\overline{D} = \overline{\mathcal{D}}^{\mathrm{ad}}$ for some arithmetic $\bR$-Cartier divisor $\overline{\mathcal{D}}$ on a normal model $\mathcal{X}$ of $X$. Then for any $n \geq 1$ and $t \in \bR$, $V_n^{t}$ identifies with the set 
\[\{\phi \in H^0(\mathcal{X}, n \mathcal{D}) \ | \ \max_{v \in \Sigma_{K,\infty}}\|\phi\|_{v, \sup} \leq e^{-t}\}.\]

(2) Suppose that $K$ is a number field and let $n \geq 1$ be an integer. Then $\exp(-\lambda_{\max,n}(\overline{D}))$ and $\exp(-\lambda_{\min,n}(\overline{D}))$ are respectively the first and last minima of $(V_n, (\|\|_{v,\sup})_{v \in\Sigma_K})$ in the sense of Bombieri--Vaaler \cite[section III.1]{BombieriVaaler}. 
\end{example}

\begin{defi}\label{defiasyslopes} The \textit{asymptotic maximal slope} of $\overline{D}$ is the quantity
\[\pam (\overline{D}) = \limsup_{n \rightarrow +\infty} \frac{\lambda_{\max,n}(\overline{D})}{n}.\]
The \textit{asymptotic minimal slope} of $\overline{D}$ is
\[\pim (\overline{D}) = \liminf_{n \rightarrow +\infty} \frac{\lambda_{\min,n}(\overline{D})}{n}.\]
\end{defi}

\begin{rema}\label{remadefpam}  For any integer $n \geq 1$ the $K$-vector space $V_n$ equipped with the collections of norms $(\|.\|_{v,\sup})_{v \in \Sigma_K}$ is an adelic $K$-vector space in the sense of Gaudron \cite{Gaudronpentes}, and we let $\pmax(\overline{V}_n)$ be its maximal slope as defined in \cite[section 5.1]{Gaudronpentes}. By   \cite[Theorem 5.20]{Gaudronpentes}, we have $ \pmax(\overline{V}_n) = \lambda_{\max,n}(\overline{D}) + o(n)$.    Therefore
\[\pam (\overline{D}) = \limsup_{n \rightarrow +\infty} \frac{\pmax(\overline{V}_n)}{n}.\]
In particular, Definition \ref{defiasyslopes} of $\pam (\overline{D})$  is consistent with the one of \cite{Chenpolygone}. The same is true for the asymptotic minimal slope.  
\end{rema}

The asymptotic slopes of $\overline{D}$ are actually limits under suitable positivity conditions on $D$.

\begin{prop}[\cite{Chenpolygone}, Theorem 4.1.2]\label{proppamconverge} If $D$ is big, the sequence $(\lambda_{\max,n}(\overline{D})/n)_{n \in \bN}$ converges in $\bR$~: 
$\pam (\overline{D}) = \lim_{n \rightarrow +\infty} \lambda_{\max,n}(\overline{D})/n$.
If moreover the graded $K$-algebra 
\[V_\bullet = \bigoplus_{n \in \bN} V_n\] is finitely generated,  the sequence $(\lambda_{\min,n}(\overline{D})/n)_{n \in \bN}$ converges in $\bR$~: $\pim (\overline{D}) = \lim_{n \rightarrow +\infty} \lambda_{\min,n}(\overline{D})/n$.
\end{prop}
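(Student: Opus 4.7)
The plan is to prove both convergence statements via Fekete's lemma, deriving the relevant (super/sub)-additivity from the multiplicative structure of the graded algebra $V_\bullet$, with the finiteness of the limit obtained from comparisons with heights/slopes.

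For $\pam(\overline{D})$, the key observation is that if nonzero sections $\phi \in V_n$ and $\psi \in V_m$ witness $\lambda_{\max,n}(\overline{D}) \geq t$ and $\lambda_{\max,m}(\overline{D}) \geq s$ respectively, then $\phi\psi \in V_{n+m} \setminus \{0\}$ (as $\Rat(X)^\times$ is a group), and by submultiplicativity of sup-norms
\[
\|\phi\psi\|_{v,\sup} \leq \|\phi\|_{v,\sup}\|\psi\|_{v,\sup} \quad \text{for every } v \in \Sigma_K.
\]
Hence $\phi\psi \in V_{n+m}^{s+t}$, yielding the super-additivity $\lambda_{\max,n+m}(\overline{D}) \geq \lambda_{\max,n}(\overline{D}) + \lambda_{\max,m}(\overline{D})$ whenever $V_n, V_m \neq 0$. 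Bigness of $D$ ensures $V_n \neq 0$ for all $n \geq n_0$, so Fekete's lemma gives $\lambda_{\max,n}(\overline{D})/n \to \sup_{n \geq n_0}\lambda_{\max,n}(\overline{D})/n$ in $\bR \cup \{+\infty\}$. To rule out $+\infty$, I would establish the elementary bound $\pam(\overline{D}) \leq \Essmin(\overline{D})$: for any $t < \lambda_{\max,n}(\overline{D})/n$, a witnessing section $\phi \in V_n^{nt}$ and the argument of Lemma \ref{lemmaheightsupport} (together with the projection formula \eqref{projformulaheight} to absorb the $v_0$-shift) show that every point $x \in X(\overline{K}) \setminus \Supp(nD + (\phi))$ satisfies $h_{\overline{D}}(x) \geq t$. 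Since $\Essmin(\overline{D}) \leq h_{\overline{D}}(x_0) < +\infty$ for any fixed $x_0$, finiteness follows.

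For $\pim(\overline{D})$ under finite generation, the strategy is to derive an almost-subadditivity of the form $-\lambda_{\min,n+m}(\overline{D}) \leq -\lambda_{\min,n}(\overline{D}) - \lambda_{\min,m}(\overline{D}) + o(n+m)$. Finite generation ensures that for $n, m$ sufficiently large the multiplication map $\mu_{n,m} \colon V_n \otimes_K V_m \to V_{n+m}$ is surjective, and submultiplicativity of the sup-norms makes it of operator norm $\leq 1$ at every place. Interpreting this in Gaudron's adelic slope formalism (Remark \ref{remadefpam}), so that $\lambda_{\min,n}(\overline{D}) = \pmin(\overline{V}_n) + o(n)$, one combines the tensor-product estimate
\[
\pmin(\overline{V}_n \otimes \overline{V}_m) \geq \pmin(\overline{V}_n) + \pmin(\overline{V}_m) - O(\log \dim(V_n \otimes V_m))
\]
(due to Chen) with the fact that $\pmin$ does not decrease under a norm-$\leq 1$ surjection (since the sup-norms on $V_{n+m}$ dominate the quotient norms inherited from $V_n \otimes V_m$). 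The resulting error term is $O(\log n)$, hence sub-linear, and a standard variant of Fekete's lemma for almost-superadditive sequences produces the desired limit.

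The main obstacle is the $\pim$ case: reconciling the adelic slope machinery with the specific sup-norm structure on $V_{n+m}$ and carefully tracking the sub-linear error through Chen's tensor slope inequality. In particular, one must verify that the quotient norms induced by $\mu_{n,m}$ are comparable to the intrinsic sup-norms on $V_{n+m}$ up to a sub-linear defect, so that the slope inequality transfers. This is precisely the content of Chen's \cite[Theorem 4.1.2]{Chenpolygone}, whose blueprint I would follow.
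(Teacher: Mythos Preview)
The paper does not supply its own proof of this proposition; it is stated with a citation to \cite[Theorem~4.1.2]{Chenpolygone}. Your outline follows the standard blueprint (which is essentially Chen's): super-/sub-additivity combined with Fekete's lemma, with slope comparisons controlling the error terms. The overall strategy is correct, but two points deserve care.

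First, your finiteness argument for $\pam$ is flawed as written: the inequality $\Essmin(\overline{D}) \leq h_{\overline{D}}(x_0)$ is false in general, since the supremum defining $\Essmin$ ranges over proper closed subsets $Y$ that may well contain $x_0$. One repairs this either by showing directly that points of bounded $\overline{D}$-height are Zariski-dense (for instance via a projective embedding and points with bounded coordinates), or---more in the spirit of your argument---by bounding $\lambda_{\max,n}(\overline{D})/n$ from above through the slope comparison of Remark~\ref{remadefpam} together with a standard linear upper bound on $\pmax(\overline{V}_n)$.

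Second, for $\pim$, finite generation of $V_\bullet$ does \emph{not} by itself guarantee that the multiplication map $V_n \otimes V_m \to V_{n+m}$ is surjective for all large $n,m$. What it does guarantee is the existence of an integer $q \geq 1$ such that the Veronese subalgebra $V_\bullet^{(q)}$ is generated in degree one, so that $V_{qn}\cdot V_{qm} = V_{q(n+m)}$ for all $n,m \geq 1$. One therefore runs the almost-superadditive Fekete argument along each residue class modulo $q$ and then patches the resulting limits (using that $V_r \cdot V_{qn} \to V_{qn+r}$ is also eventually surjective for each fixed $r$). You should also make explicit why the limit is not $-\infty$: iterating your tensor-slope inequality starting from a fixed finite generating set yields a linear lower bound $\pmin(\overline{V}_n) \geq -Cn$, while the upper bound follows from $\pmin \leq \pmax$ and the first part.
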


  The following result of Chen \cite[Proposition 3.11]{Chenfujita} characterizes the positivity of the asymptotic maximal slope in terms of bigness of $\overline{D}$. It holds in the more general setting of adelic curves by Chen--Moriwaki \cite{CMadeliccurves}.
\begin{prop}[\cite{CMadeliccurves}, Proposition 6.4.18]\label{proppamsup} 
The following conditions are equivalent~:
\begin{enumerate}
\item $\overline{D}$ is big,
\item $D$ is big and $\pam(\overline{D}) > 0$.
\end{enumerate}
\end{prop}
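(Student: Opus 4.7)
The plan is to handle the two implications separately; $(1)\Rightarrow(2)$ is essentially formal given Chen's convergence theorem for Harder--Narasimhan polygons of the adelic spaces $\overline{V}_n$, while $(2)\Rightarrow(1)$ is the substantive direction and requires producing many small sections starting from a single one of exponentially small norm.

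For $(1)\Rightarrow(2)$, suppose $\widehat{\vol}(\overline{D})>0$, so that $\widehat{h}^0(X,n\overline{D})\gg n^{d+1}$ along a subsequence. A crude geometry-of-numbers bound shows that the cardinality (or $k$-dimension, in the function field case) of $\widehat{H}^0(X,n\overline{D})$ is at most $C^{n\cdot h^0(X,nD)}$ for some $C>0$ depending on $\overline{D}$: every small section sits in the unit sup-norm ball of a fixed adelic space of $K$-rank $h^0(X,nD)$. The hypothesis then forces $h^0(X,nD)\gg n^{d}$, so $D$ is big. To get $\pam(\overline{D})>0$, I would invoke Chen's convergence theorem for the normalized Harder--Narasimhan polygons of the $\overline{V}_n$: the limit polygon is a bounded concave function on $[0,1]$ whose maximum value equals $\pam(\overline{D})$ and whose integral of positive part equals, up to a positive constant, $\widehat{\vol}(\overline{D})$. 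A nonzero integral of the positive part therefore forces the maximum to be strictly positive.

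For the main direction $(2)\Rightarrow(1)$, fix $0<t<\pam(\overline{D})$. By Proposition~\ref{proppamconverge}, for all sufficiently large $n_0$ there exists a nonzero $\phi_0\in H^0(X,n_0D)$ with $\|\phi_0\|_{v,\sup}\leq e^{-tn_0}$ for $v\mid v_0$ and $\|\phi_0\|_{v,\sup}\leq 1$ otherwise. Multiplication by $\phi_0^k$ embeds $H^0(X,mD)$ into $H^0(X,(kn_0+m)D)$, multiplying the $v_0$-sup-norm by $e^{-tkn_0}$ and leaving the other sup-norms non-increasing. Since $D$ is big, an adelic Siegel-type lemma applied to $(H^0(X,mD),(\|\cdot\|_{v,\sup})_v)$ yields a $K$-basis each of whose elements has sup-norms bounded by $C^m$ for some $C$ independent of $m$. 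Choosing $k,m$ large with $tkn_0\geq m\log C+1$ then produces at least $h^0(X,mD)\sim\vol(D)m^{d}/d!$ elements of $\widehat{H}^0(X,(kn_0+m)\overline{D})$, and a Minkowski-type count over small $\bZ$-linear combinations of these basis vectors yields $\widehat{h}^0(X,N\overline{D})\gg N^{d+1}$ for $N=kn_0+m$, hence $\widehat{\vol}(\overline{D})>0$.

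The principal obstacle is making this final counting step rigorous: optimizing $k$ and $m$ and controlling the number of admissible integer combinations requires quantitative input from arithmetic Siegel's lemma together with careful Minkowski counting, and is in effect a mild form of the arithmetic Fujita approximation of Chen. A conceptually cleaner but technically heavier route, which relies on results developed later in the paper, would pass through the Boucksom--Chen concave transform $G_{\overline{D},\mathbf{z}}$ on the Okounkov body: one has $\pam(\overline{D})=\max G_{\overline{D},\mathbf{z}}$, and $\widehat{\vol}_{\chi}(\overline{D})$ is proportional to the integral of the positive part of $G_{\overline{D},\mathbf{z}}$ over $\Delta_{\mathbf{z}}(D)$, so both implications would reduce to the continuity and concavity of $G_{\overline{D},\mathbf{z}}$.
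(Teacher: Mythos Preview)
The paper does not prove this proposition; it is quoted from Chen \cite[Proposition 3.11]{Chenfujita} and Chen--Moriwaki \cite[Proposition 6.4.18]{CMadeliccurves}, so there is no in-paper argument to compare against.

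Your sketch is in the spirit of the cited proofs. For $(1)\Rightarrow(2)$ the reasoning is sound; a shorter variant for strict positivity of $\pam$ bypasses the polygon machinery: continuity of $\widehat{\vol}$ (Theorem \ref{thmcontvol}) gives that $\overline{D}-\varepsilon\pi^*\overline{\xi}_0$ is still big for small $\varepsilon>0$, hence $\widehat{H}^0(X,n(\overline{D}-\varepsilon\pi^*\overline{\xi}_0))\ne\{0\}$ for large $n$, which directly yields $\lambda_{\max,n}(\overline{D})\geq\varepsilon n$.

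For $(2)\Rightarrow(1)$ your outline is correct, and the counting you describe (balance $m\sim\alpha N$, $kn_0\sim(1-\alpha)N$ with $\alpha$ small enough that $t(1-\alpha)>\alpha\log C$) does give $\widehat{h}^0(X,N\overline{D})\gg N^{d+1}$. The one step needing care is the existence of a basis of $H^0(X,mD)$ with all sup-norms $\leq C^m$, i.e.\ $\lambda_{\min,m}(\overline{D})\geq -Cm$; this is true but not entirely formal for a merely big $D$. One clean justification: since $D$ is big, write $qD\sim A+E$ with $A$ very ample and $E$ effective, equip $A$ with a Fubini--Study-type metric so that the monomial basis of $H^0(X,mA)$ has bounded sup-norms, and multiply by the $m$-th power of a fixed section of $\cO_X(E)$ to obtain $\sim m^d$ independent elements of $H^0(X,mqD)$ with norms $\leq C^m$; a full basis is not needed for your argument. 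You are also right that the route via $G_{\overline{D},\mathbf{z}}$ is circular in this paper: Lemma \ref{lemmapamcf} relies on Remark \ref{remapamcursor}, which in turn invokes the present proposition.
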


\subsection{Proof of Theorem \ref{thmmain2intro}}\label{sectionproofconj} The goal of this subsection is to prove the following theorem, which is a slightly more precise version of Theorem \ref{thmmain2intro} in the introduction. We say that a divisor $D \in \Div(X)_\bQ$ is semi-ample if there exists an integer $n \geq 1$ such that $nD \in \Div(X)_\bZ$ and $\cO_X(nD)$ is generated by global sections.

\begin{theorem}\label{thmpamessmin}  Let $\overline{D} \in \widehat{\Div}(X)_\bR$. 
\begin{enumerate}
\item\label{itempamessmin} Suppose that $D$ is big. Then
\[\Essmin(\overline{D}) \geq \pam(\overline{D}),\]
 with equality if $\overline{D}$ is semi-positive.
\item\label{itempimessmin} Suppose that $D \in \Div(X)_\bQ$ is big and semi-ample. Then
\[\zeta_{\mathrm{abs}}(\overline{D}) \geq  \pim(\overline{D}),\]
 with equality if $\overline{D}$ is semi-positive.
\end{enumerate}
\end{theorem}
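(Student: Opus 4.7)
Both parts are proved by bracketing, with the hard direction in each coming from Theorem~\ref{thmmain} (for~(1)) and from an arithmetic Nakai--Moishezon criterion (for~(2)). The main technical device is a uniform shift at $v_0$: fix an adelic $\bR$-Cartier divisor $\overline{\xi}_{v_0}$ on $\Spec K$, supported at the places above $v_0$, normalized so that the projection formula \eqref{projformulaheight} and the very definition of $\lambda_{\max,n}$, $\lambda_{\min,n}$ together yield
\[
\Essmin(\overline{D} - t\pi^*\overline{\xi}_{v_0}) = \Essmin(\overline{D}) - t, \qquad \pam(\overline{D} - t\pi^*\overline{\xi}_{v_0}) = \pam(\overline{D}) - t,
\]
and the analogous identities for $\zeta_{\mathrm{abs}}$ and $\pim$. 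This reduces each of the four comparisons to its boundary case $t = 0$.

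\textbf{Inequality side.} For~(1), given $t < \pam(\overline{D})$, pick $n \geq 1$ and $\phi \in V_n \setminus \{0\}$ witnessing $\lambda_{\max,n}(\overline{D}) \geq nt$. Then $\phi$ is a non-zero global section of $n(\overline{D} - t\pi^*\overline{\xi}_{v_0})$ whose $v$-sup-norms are all at most $1$, so Lemma~\ref{lemmaheightsupport} gives $h_{\overline{D} - t\pi^*\overline{\xi}_{v_0}}(x) \geq 0$ on the dense open complement of $\Supp(nD + (\phi))$. Taking the supremum over proper Zariski-closed subschemes and letting $t \uparrow \pam(\overline{D})$ yields $\Essmin(\overline{D}) \geq \pam(\overline{D})$. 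For~(2), fix $t < \pim(\overline{D})$; for $n$ large \emph{every} non-zero $\phi \in V_n$ satisfies the $\lambda_{\min,n}(\overline{D}) \geq nt$ norm bound, and by the semi-ampleness of $D$ the linear system $|nD|$ is base-point-free, so at any $x \in X(\overline{K})$ some such $\phi$ does not vanish. Lemma~\ref{lemmaheightsupport} again gives $h_{\overline{D}}(x) \geq t$, hence $\zeta_{\mathrm{abs}}(\overline{D}) \geq \pim(\overline{D})$.

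\textbf{Equality side.} Assume $\overline{D}$ is semi-positive. For~(1), Proposition~\ref{proppamsup} together with the shift identity rewrites $\pam(\overline{D})$ as $\sup\{t : \overline{D} - t\pi^*\overline{\xi}_{v_0} \text{ is big}\}$, while Corollary~\ref{coromain} (a consequence of Theorem~\ref{thmmain}) rewrites $\Essmin(\overline{D})$ as $\sup\{t : \overline{D} - t\pi^*\overline{\xi}_{v_0} \text{ is pseudo-effective}\}$; since big implies pseudo-effective, the inequality $\pam(\overline{D}) \leq \Essmin(\overline{D})$ is immediate. For the reverse, given $t < \Essmin(\overline{D})$ and a fixed big adelic divisor $\overline{A}$, the pseudo-effective divisor $\overline{D}_0 := \overline{D} - t\pi^*\overline{\xi}_{v_0}$ has $\overline{D}_0 + \varepsilon \overline{A}$ big for every $\varepsilon > 0$, hence $\pam(\overline{D}_0 + \varepsilon \overline{A}) > 0$ by Proposition~\ref{proppamsup}; letting $\varepsilon \to 0^+$ and invoking the standard continuity of $\pam$ on the interior of the big cone (consequence of the super-additivity of the $\lambda_{\max,n}$ and of Theorem~\ref{thmcontvol}, see \cite{Chenpolygone,CMadeliccurves}) gives $\pam(\overline{D}_0) \geq 0$, i.e.\ $\pam(\overline{D}) \geq t$. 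For~(2), for any $t < \zeta_{\mathrm{abs}}(\overline{D})$ the shifted divisor $\overline{D} - t\pi^*\overline{\xi}_{v_0}$ is nef, being semi-positive with non-negative height at every point; since $D$ is semi-ample, the arithmetic Nakai--Moishezon criterion of Zhang~\cite{Zhangplav}, generalized to the adelic setting by Chen and Moriwaki~\cite{CMadeliccurves}, then yields $\pim(\overline{D} - t\pi^*\overline{\xi}_{v_0}) \geq 0$, i.e.\ $\pim(\overline{D}) \geq t$, and letting $t \uparrow \zeta_{\mathrm{abs}}(\overline{D})$ finishes the proof.

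\textbf{Main obstacle.} The real work lies in the equality direction of~(1), which rests on Theorem~\ref{thmmain} to identify the pseudo-effective cone with the closure of the big cone along the ray $t \mapsto -t\pi^*\overline{\xi}_{v_0}$; the boundary-continuity step for $\pam$ needed to close the loop is mild but genuinely depends on Theorem~\ref{thmcontvol}. Part~(2) is formally parallel, with the arithmetic Nakai--Moishezon criterion playing the role of Theorem~\ref{thmmain}.
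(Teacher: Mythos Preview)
Your strategy coincides with the paper's: the shift along $\pi^*\overline{\xi}_{v_0}$, the identification $\pam(\overline{D}) = \sup\{t : \overline{D}_t \text{ big}\}$ via Proposition~\ref{proppamsup}, the identification $\Essmin(\overline{D}) = \sup\{t : \overline{D}_t \text{ pseudo-effective}\}$ via Corollary~\ref{coromain}, and the appeal to arithmetic Nakai--Moishezon for part~(2). Two points need sharpening.

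In part~(1), once you have the two suprema above, their equality is immediate from the definition of pseudo-effectivity: if $\overline{D}_{t_0}$ is pseudo-effective and $\overline{D}_s$ is big for some $s \ll 0$ (such $s$ exists since $D$ is big), then for any $t \in (s,t_0)$ the divisor $\overline{D}_t$ is a convex combination of a pseudo-effective and a big class, hence big. Your detour through ``continuity of $\pam$ on the interior of the big cone'' is both unnecessary and not quite justified as stated: the point $\overline{D}_0$ may lie on the boundary of the big cone, and super-additivity of $\lambda_{\max,n}$ only gives $\pam(\overline{D}_0 + \varepsilon\overline{A}) \geq \pam(\overline{D}_0) + \varepsilon\pam(\overline{A})$, which is the wrong direction to conclude $\pam(\overline{D}_0) \geq 0$.

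The more substantive gap is in part~(2). You invoke Nakai--Moishezon from the hypothesis ``$\overline{D}_t$ is nef and $D$ is semi-ample'', but the criterion \cite[Theorem 7.4.1]{CMadeliccurves} requires, for \emph{every} irreducible subvariety $Y \subset X$, that $\widehat{H}^0(Y, n\overline{D}_t|_Y) \ne 0$ for all large $n$. Nefness of $\overline{D}_t$ alone does not supply this. The paper closes this gap by feeding part~(1) back in: for $t$ strictly below $t_0 := \Essabs(\overline{D})$ one has $\Essabs(\overline{D}_t|_Y) \geq t_0 - t > 0$, and since $\overline{D}_t|_Y$ is semi-positive, the equality case of part~(1) applied on $Y$ gives $\pam(\overline{D}_t|_Y) = \Essmin(\overline{D}_t|_Y) > 0$, which in turn produces the required small sections on $Y$. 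This use of part~(1) on all subvarieties is the step your sketch omits.
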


Note that the first assertion of this theorem implies Conjecture \ref{conjChen}. As we will see, it is an immediate consequence of Corollary \ref{coromain} and  Proposition \ref{proppamsup}. We shall derive Theorem \ref{thmpamessmin} \eqref{itempimessmin} from an arithmetic Nakai-Moishezon criterion for ampleness, originally proved by Zhang when $K$ is a number field \cite[Theorem 4.2]{Zhangplav} and extended to the much wider setting of adelic curves by Chen and Moriwaki  \cite{CMadeliccurves}, under weaker positivity assumptions.

\begin{proof} \eqref{itempamessmin} Assume that $D$ is big. Let $\overline{\xi}_0 = (\xi_{0,v})_{v \in \Sigma_K}$ be the adelic divisor on $\Spec K$ given by 
\[\xi_{0,v} = \left\{\begin{tabular}{ll}
$2$ & if $v | v_0$,\\
$0$ & otherwise.
\end{tabular} \right.\] 
 It follows from the definitions that $\pam(\overline{D} - t\pi^*\overline{\xi}_0) = \pam(\overline{D}) -t$ for any $t \in \bR$. Applying  Proposition \ref{proppamsup} to $\pam(\overline{D} - t\pi^*\overline{\xi}_0)$ when $t \in \bR$ varies, we find
\begin{equation*}
\begin{split}
\pam(\overline{D}) & = \sup\{t \in \bR \ | \ \overline{D} - t\pi^*\overline{\xi}_0  \text{ is big}\}\\
& = \sup\{t \in \bR \ | \ \overline{D} - t\pi^*\overline{\xi}_0  \text{ is pseudo-effective}\},
\end{split}
\end{equation*}
where the second equality follows from the definition of pseudo-effectivity. We conclude with Corollary \ref{coromain} (note that $\widehat{\deg}(\overline{\xi}_0)=1$).

\eqref{itempimessmin} For $t \in \bR$, we let $\overline{D}_t := \overline{D} - t\pi^*\overline{\xi}_0$. It follows from the projection formula for the height \eqref{projformulaheight} and the definition of $\zeta_{\mathrm{abs}}(\overline{D})$ that 
\[\zeta_{\mathrm{abs}}(\overline{D}) = \sup \{t \in \bR \ | \ \overline{D}_t  \text{ is nef}\}.\] 
Let $n \geq 1$ be an integer and let $(\phi_1, \ldots, \phi_{\ell_n})$ be a basis for $H^0(X,nD)$. Let $x \in X(\overline{K})$ be a point with residue field $K(x)$. Since $D$ is semi-ample, there exists $i \in \{1, \ldots, \ell_n\}$ such that $x\notin \Supp (D + (\phi_i))$ provided that $n$ is sufficiently large and divisible. It follows that
\[nh_{\overline{D}}(x) = -\sum_{w \in \Sigma_{K(x)}}n_w(K(x)) \ln\|\phi_i\|_w(x) \geq -\sum_{w \in \Sigma_{K(x)}}n_w(K(x)) \ln\|\phi_i\|_{w,\sup}.\]
We infer that $h_{\overline{D}}(x) \geq \lambda_{\min, n}(\overline{D})/n$ for every $x \in X(\overline{K})$ and $n \gg 1$ sufficiently divisible. By the projection formula \eqref{projformulaheight}, $D_t = \overline{D} -t\pi^*\overline{\xi}_0$ is nef for every $t \leq \pim(\overline{D})$, so that 
\[\pim(\overline{D}) \leq \sup\{t \in \bR \ | \ \overline{D}_t \text{ is nef}\} = \zeta_{\mathrm{abs}}(\overline{D}).\] 
We now assume that $\overline{D}$ is semi-positive to prove the converse inequality.  By definition, $\pim(\overline{D}_t) = \pim(\overline{D})-t$ for  any $t \in \bR$.  By the above, there exists $t_0 \in \bR$ such that $\overline{D}_t$ is nef for every $t \leq t_0$. Let $t < t_0$ be a rational number and let $Y \subset X$ be an irreducible subvariety. We have
 \[\pam({\overline{D}_t}_{|Y}) = \Essmin({\overline{D}_t}_{|Y}) \geq \Essabs({\overline{D}_t}_{|Y}) > \Essabs(\overline{D}_{t_0}) \geq 0.\]
By definition of $\pam({\overline{D}_t}_{|Y})$ there exists an integer $n_Y\geq 1$ such that $\widehat{H}^0(Y,n{\overline{D}_t}_{|Y}) \ne 0$ for every $n \geq n_Y$.  By the arithmetic Nakai-Moishezon criterion \cite[Theorem 7.4.1]{CMadeliccurves},  we have $\pim(\overline{D})-t = \pim(\overline{D}_t) \geq 0$, and finally
\[\pim(\overline{D}) \geq \sup\{t \in \bR \ | \ \overline{D}_t  \text{ is nef}\} = \zeta_{\mathrm{abs}}(\overline{D}).\]
\end{proof}

\begin{rema}\label{remapamcursor} When $D$ is big, we proved that 
\begin{equation}\label{eqpamcursor}
\pam(\overline{D})  = \sup\{t \in \bR \ | \ \overline{D} - t\pi^*\overline{\xi}_0  \text{ is pseudo-effective}\},
\end{equation}
where $\overline{\xi}_0 = (\xi_{0,v})_{v \in \Sigma_K} \in \widehat{\Div}(\Spec K)_\bR$ is defined by $\xi_{0,v} = 2$ if $v |v_0$ and $\xi_{0,v} = 0$ otherwise. This equality actually holds for any   $\overline{\xi} \in \widehat{\Div}(\Spec K)_\bR$ with $\widehat{\deg}(\overline{\xi}) = 1$ (see \cite{ChenMoriwaki}). Note that \eqref{eqpamcursor} shows that Theorem \ref{thmmain} is actually equivalent to Conjecture \ref{conjChen}. 
\end{rema}

\section{The Boucksom--Chen concave transform}\label{sectionBCtransform}
The purpose of this section is to recall the definition and some known properties of the concave transform introduced by Boucksom and Chen \cite{BoucksomChen}. We closely follow Moriwaki \cite[section 1.1]{MoriwakiMZ2012}, who extended this construction to the case of $\bR$-Cartier divisors.

\subsection{Okounkov bodies}\label{sectionOkounkov} 
 We briefly recall the construction of an Okounkov body attached to a graded linear series of a divisor,  following   \cite{LazMus} and \cite{MoriwakiMZ2012}.   Let $K$ be a global field and $X \rightarrow \Spec K$ be a projective, normal and geometrically integral variety of dimension $d := \dim X$. Let $D$ be a divisor on $X$, and let $D_{\overline{K}}$ be the pullback of $D$ to $X_{\overline{K}} = X\times_K \Spec \overline{K}$. The choice of a system of parameters $\mathbf{z}$ centred at a regular point $P \in X(\overline{K})$ defines a function
\[\nu_{\mathbf{z}} \colon H^0(X_{\overline{K}},D_{\overline{K}})\setminus \{0\} \rightarrow \bR^d\]
satisfying the following conditions (see \cite[section 1]{MoriwakiMZ2012} for details)~:
\begin{itemize}
 \item for every $a \in F^\times$ and $\phi \in H^0(X_{\overline{K}},D_{\overline{K}})\setminus \{0\}$, $\nu_{\mathbf{z}}(a\phi) = \nu_{\mathbf{z}}(\phi)$;
 \item  for all $\phi_1,\phi_2 \in H^0(X_{\overline{K}},D_{\overline{K}})\setminus \{0\}$, 
 \[\nu_{\mathbf{z}}(\phi_1\phi_2) = \nu_{\mathbf{z}}(\phi_1) + \nu_{\mathbf{z}}(\phi_2) \text{ and } \nu_{\mathbf{z}}(\phi_1 +\phi_2) \geq_{\mathrm{lex}} \min\{\nu_{\mathbf{z}}(\phi_1),\nu_{\mathbf{z}}(\phi_2)\},\]
 \end{itemize}
 where $\geq_{\mathrm{lex}}$ denotes the lexicographic order on $\bR^d$. 
  Given a graded $K$-subalgebra $W_\bullet = \bigoplus_{n \in \bN}W_n$ of $ V_\bullet(D):= \bigoplus_{n \in \bN} H^0(X,nD)$, we consider the subset of $\bR^d$ defined by
 \[\Gamma_{\mathbf{z}}(W_\bullet) =  \bigcup_{n \geq 1} \frac{1}{n}\nu_{\mathbf{z}}(W_n \otimes_K \overline{K} \setminus \{0\}) = \left\{ \frac{\nu_{\mathbf{z}}(\phi)}{n} \ | \ n \geq 1, \ 0 \ne \phi \in W_n \otimes_K \overline{K} \right\}.\]
 
 \begin{defi}
 The \textit{Okounkov body of $W_\bullet$} with respect to the system of parameters $\mathbf{z}$ is defined to be the closure \[\Delta_{\mathbf{z}}(W_\bullet) = \overline{\Gamma_{\mathbf{z}}(W_\bullet)}\] of $\Gamma_{\mathbf{z}}(W_\bullet)$ in $\bR^d$ for the euclidean topology. We call  $\Delta_{\mathbf{z}}(D) = \Delta_{\mathbf{z}}(V_\bullet(D))$ the \textit{Okounkov body of $D$} with respect to $\mathbf{z}$.
\end{defi} 
 When $D$ is big, $\Delta_{\mathbf{z}}(D) \subset \bR^d$ is a convex body (see \cite[Proposition 2.1]{LazMus}), and we have $\vol(D) = d!\vol(\Delta_{\mathbf{z}}(D))$.

 \subsection{The concave transform}
Let $\overline{D} = (D,(g_v)_{v \in \Sigma_K})$ be  an adelic $\bR$-Cartier divisor  on $X$ with $D$ big.  For any  $t \in \bR$ and $n\in \bN$, let $V_n^t$ be the $K$-linear subspace of $V_n = H^0(X,nD)$ defined in section \ref{sectionasyslopes}.  \begin{defi}  The \textit{concave transform} of $\overline{D}$ with respect to $\mathbf{z}$ is the function $G_{\overline{D},\mathbf{z}}\colon \Delta_{\mathbf{z}}(D) \rightarrow \bR \cup \{-\infty\}$ defined by
 \[G_{\overline{D},\mathbf{z}}(\alpha) = \sup \{t \ | \ \alpha \in \Delta_{\mathbf{z}}(V_\bullet^t)\}\]
 for every $\alpha \in \Delta_{\mathbf{z}}(D)$.
 \end{defi}
By \cite[section 1.3]{BoucksomChen}, $G_{\overline{D},\mathbf{z}}$ is an upper-semicontinuous concave function and we have
\[\pim(\overline{D}) \leq G_{\overline{D},\mathbf{z}}(\alpha) \leq \pam(\overline{D})\]
for any $\alpha \in  \Delta_{\mathbf{z}}(D)$. The maximal asymptotic slope  actually coincides with the maximum of $G_{\overline{D},\mathbf{z}}$. 
\begin{lemma}\label{lemmapamcf}
 Assume that $D$ is big. Then we have 
 \[\pam(\overline{D}) = \max_{\alpha \in \Delta_{\mathbf{z}}(D)}G_{\overline{D},\mathbf{z}}(\alpha) .\]
\end{lemma}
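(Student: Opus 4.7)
The inequality $G_{\overline{D},\mathbf{z}}(\alpha) \leq \pam(\overline{D})$ on all of $\Delta_{\mathbf{z}}(D)$ is already recorded just before the statement, so the plan is to exhibit a point $\alpha \in \Delta_{\mathbf{z}}(D)$ at which $G_{\overline{D},\mathbf{z}}(\alpha) \geq \pam(\overline{D})$. Since $D$ is big, $\Delta_{\mathbf{z}}(D) \subset \bR^d$ is a compact convex body and, by Proposition \ref{proppamconverge}, $\lambda_{\max,n}(\overline{D})/n \to \pam(\overline{D}) \in \bR$.

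The plan is first to extract, for each rational $s < \pam(\overline{D})$, an element witnessing $s \leq G_{\overline{D},\mathbf{z}}$. Concretely, for such $s$ and $n$ large enough one has $\lambda_{\max,n}(\overline{D}) > ns$, so $V_n^{ns} \ne \{0\}$; choose $\phi_n \in V_n^{ns} \setminus \{0\}$ and set $\alpha_n = \nu_{\mathbf{z}}(\phi_n)/n$. By construction $\alpha_n$ lies in the set $\tfrac{1}{n}\nu_{\mathbf{z}}(V_n^{ns}\otimes_K \overline{K} \setminus \{0\})$, which is contained in $\Delta_{\mathbf{z}}(V_\bullet^s)$ (using that the filtration $V_\bullet^s := \bigoplus_n V_n^{ns}$ is a graded $K$-subalgebra of $V_\bullet(D)$, a consequence of submultiplicativity of the sup norms). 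By compactness of $\Delta_{\mathbf{z}}(D)$ and closedness of $\Delta_{\mathbf{z}}(V_\bullet^s)$, a subsequence $\alpha_{n_k}$ converges to some $\alpha_s \in \Delta_{\mathbf{z}}(V_\bullet^s)$, so $G_{\overline{D},\mathbf{z}}(\alpha_s) \geq s$ by the definition of the concave transform.

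Second, I would pass to the supremum. Take a sequence $s_\ell \nearrow \pam(\overline{D})$ and, by compactness of $\Delta_{\mathbf{z}}(D)$, extract a convergent subsequence $\alpha_{s_\ell} \to \alpha_\star$. Upper-semicontinuity of $G_{\overline{D},\mathbf{z}}$ on $\Delta_{\mathbf{z}}(D)$ then gives
\[
G_{\overline{D},\mathbf{z}}(\alpha_\star) \;\geq\; \limsup_{\ell \to \infty} G_{\overline{D},\mathbf{z}}(\alpha_{s_\ell}) \;\geq\; \lim_{\ell \to \infty} s_\ell \;=\; \pam(\overline{D}),
\]
and combined with the reverse inequality already known, this yields $G_{\overline{D},\mathbf{z}}(\alpha_\star) = \pam(\overline{D})$, so the supremum is attained and equals $\pam(\overline{D})$.

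The main technical point is the identification $V_\bullet^s := \bigoplus_n V_n^{ns}$ as a graded subalgebra (needed to make sense of $\Delta_{\mathbf{z}}(V_\bullet^s)$ as an Okounkov body in the sense of section \ref{sectionOkounkov}) and the verification that $\alpha_n = \nu_{\mathbf{z}}(\phi_n)/n$ genuinely lies in it; everything else is a soft compactness-plus-u.s.c.\ argument on the compact Okounkov body. No deep input beyond Proposition \ref{proppamconverge} and the elementary properties of $\nu_{\mathbf{z}}$ and $G_{\overline{D},\mathbf{z}}$ recorded in \cite{BoucksomChen, MoriwakiMZ2012} is required.
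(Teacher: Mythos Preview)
Your proof is correct and follows essentially the same route as the paper: for each $s<\pam(\overline{D})$ produce a nonzero $\phi\in V_n^{ns}$, map it to $\alpha=\nu_{\mathbf{z}}(\phi)/n\in\Delta_{\mathbf{z}}(V_\bullet^s)$, and conclude $G_{\overline{D},\mathbf{z}}(\alpha)\geq s$. The only cosmetic differences are that the paper obtains such $\phi$ via the bigness of $\overline{D}-t\pi^*\overline{\xi}_0$ (Proposition~\ref{proppamsup}) rather than directly from $\lambda_{\max,n}(\overline{D})>ns$, and that your first compactness extraction is unnecessary---each individual $\alpha_n$ already lies in $\Delta_{\mathbf{z}}(V_\bullet^s)$, so $G_{\overline{D},\mathbf{z}}(\alpha_n)\geq s$ without passing to a limit; only the final u.s.c.\ step as $s\nearrow\pam(\overline{D})$ is genuinely needed to see the supremum is attained.
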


 \begin{proof} We have already seen that $G_{\overline{D},\mathbf{z}}(\alpha) \leq \pam(\overline{D})$
 for any $\alpha \in  \Delta_{\mathbf{z}}(D)$.  For the reverse inequality, let $\overline{\xi}_0  \in \widehat{\Div}(\Spec K)_\bR$ be the $\bR$-adelic divisor defined in remark \ref{remapamcursor}. Let $t \in \bR$ be such that $\overline{D} - t\pi^*\overline{\xi}_0$ is big. For $n \geq 1$ sufficiently large, there exists a non-zero  $\phi \in \widehat{H}^0(X,n\overline{D}-nt\pi^*\overline{\xi}_0) \subset V^{nt}_n$. It follows that $\alpha_\phi := \frac{1}{n}\nu_{\mathbf{z}}(s) \in  \Delta_{\mathbf{z}}(V_\bullet^t)$, hence $ G_{\overline{D},\mathbf{z}}(\alpha_\phi) \geq t$. Therefore
 \begin{equation*}
 \begin{split}
 G_{\overline{D},\mathbf{z}}(\alpha_\phi)& \geq \sup\{t \in \bR \ | \ \overline{D} - t\pi^*\overline{\xi}_0  \text{ is big}\}\\
  & =  \sup\{t \in \bR \ | \ \overline{D} - t\pi^*\overline{\xi}_0  \text{ is pseudo-effective}\}=\pam(\overline{D}),
 \end{split}
 \end{equation*}
 so that $\max_{\alpha \in \Delta_{\mathbf{z}}(D)}G_{\overline{D},\mathbf{z}}(\alpha)\geq \pam(\overline{D})$.
\end{proof}

 \subsection{Concave transform and volume functions}\label{sectionconctransfvol} Theorem \ref{thmXiVOlOkounkov} below gives two useful formulae due to  Boucksom and Chen \cite{BoucksomChen}, expressing the volumes $\widehat{\vol}(\overline{D})$ and  $\widehat{\vol}_{\chi}(\overline{D})$ in terms of the concave transform. We first recall the definition  of $\widehat{\vol}_{\chi}(\overline{D})$ for an adelic $\bR$-Cartier divisor $\overline{D}$ on $X$. Let $\mathbb{A}_K$ be the ring of ad\`{e}les of $K$. For any integer $n \geq 1$, we let $V_n= H^0(X,nD)$ and we consider the adelic unit ball $\mathbb{B}(V_n) \subset V_n\otimes_K  \mathbb{A}_K$ defined by
\[\mathbb{B}(V_n) = \{(\phi_v) \in V_n \otimes_K \mathbb{A}_K\ | \ \|\phi_v\|_v \leq 1 \ \forall v \in \Sigma_K\}.\]
We denote by $\nu$ the Haar measure on $V_n \otimes_K \mathbb{A}_K$ with $\nu((V_n \otimes_K \mathbb{A}_K)/V_n)=1$. 
We define the $\chi$\textit{-volume} of $\overline{D}$ by
\[\widehat{\vol}_{\chi}(\overline{D}) = \limsup_{n \rightarrow +\infty} \frac{\nu(\mathbb{B}(V_n))}{n^{d+1}/(d+1)!}.\]
\begin{theorem}[Boucksom--Chen \cite{BoucksomChen}]\label{thmXiVOlOkounkov}
Let $\overline{D}$ be an adelic $\bR$-Cartier divisor on $X$ such that $D$ is big. 
 Then 
 \[\widehat{\vol}(\overline{D}) = (d+1)![K:K_0] \int_{\Delta_{\mathbf{z}}(D)} \max\{0,G_{\overline{D},\mathbf{z}}\} d\lambda\]
  and
\[\widehat{\vol}_{\chi}(\overline{D}) \leq (d+1)! [K:K_0]\int_{\Delta_{\mathbf{z}}(D)} G_{\overline{D},\mathbf{z}} d\lambda,\] 
with equality if $\inf_{\alpha \in \Delta_{\mathbf{z}}(D)}  G_{\overline{D},\mathbf{z}}(\alpha) > -\infty$.
\end{theorem}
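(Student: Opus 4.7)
The plan is to encode the arithmetic data of $\overline{D}$ as an $\bR$-indexed filtration on the graded linear series $V_\bullet = \bigoplus_n V_n$, and then extract the concave transform from the asymptotic distribution of the jumps of this filtration via the Okounkov body machinery of Lazarsfeld--Musta\c{t}\u{a}, in the filtered form developed by Boucksom--Chen.

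First I would record the combinatorial structure. For each $n \geq 1$, the spaces $(V_n^t)_{t \in \bR}$ form a decreasing $\bR$-filtration of $V_n$ satisfying $V_m^s \cdot V_n^t \subset V_{m+n}^{s+t}$ (a consequence of $\|\phi_1\phi_2\|_{v,\sup} \leq \|\phi_1\|_{v,\sup}\|\phi_2\|_{v,\sup}$). Let $r_n = \dim_K V_n$ and define the non-increasing jump sequence $t_{1,n} \geq \cdots \geq t_{r_n,n}$ by $t_{i,n} = \sup\{t \in \bR \ | \ \dim V_n^t \geq i\}$. Using an adelic Minkowski-type theorem (Bombieri--Vaaler in the number field case, an elementary Riemann--Roch on $C_K$ in the function field case), one obtains asymptotic comparisons
\[
\widehat{h}^0(X, n\overline{D}) = [K:K_0]\sum_{i : t_{i,n} > 0} t_{i,n} + O(r_n), \qquad -\ln \nu(\mathbb{B}(V_n)) = [K:K_0]\sum_{i} t_{i,n} + O(r_n),
\]
where $r_n = O(n^d)$ by the classical Hilbert polynomial. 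This reduces both volume formulas to computing the limits of normalized sums of the $t_{i,n}$.

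The key step is then to identify the asymptotic distribution of the jumps with the pushforward of Lebesgue measure on $\Delta_{\mathbf{z}}(D)$ under $G_{\overline{D},\mathbf{z}}$. For each $t \in \bR$, set $W_\bullet^{(t)} = \bigoplus_n V_n^{nt}$; the super-multiplicativity of the filtration makes $W_\bullet^{(t)}$ into a graded sub-$K$-algebra of $V_\bullet$. Applying the Okounkov construction to $W_\bullet^{(t)}$ produces convex bodies $\Delta_{\mathbf{z}}(W_\bullet^{(t)}) \subset \Delta_{\mathbf{z}}(D)$, and by definition of $G_{\overline{D},\mathbf{z}}$ we have $\Delta_{\mathbf{z}}(W_\bullet^{(t)}) = \overline{\{G_{\overline{D},\mathbf{z}} \geq t\}}$ up to a Lebesgue-negligible boundary. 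Using a Fujita-type approximation result for such graded linear series (\cite[Theorem 3.3]{LazMus} adapted to the filtered setting) one concludes
\[
\lim_{n \to +\infty} \frac{1}{n^d} \#\{i \ | \ t_{i,n} \geq nt\} = \vol(\{G_{\overline{D},\mathbf{z}} \geq t\}),
\]
i.e.\ the normalized empirical measure $\mu_n := n^{-d}\sum_i \delta_{t_{i,n}/n}$ converges vaguely to the pushforward of $\mathbf{1}_{\Delta_{\mathbf{z}}(D)} d\lambda$ by $G_{\overline{D},\mathbf{z}}$.

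Combining the two previous steps and dividing by $n^{d+1}/(d+1)!$ yields
\[
\widehat{\vol}(\overline{D}) = (d+1)![K:K_0]\int_0^{+\infty} \vol(\{G_{\overline{D},\mathbf{z}} \geq t\})\, dt = (d+1)![K:K_0]\int_{\Delta_{\mathbf{z}}(D)} \max\{0, G_{\overline{D},\mathbf{z}}\}\, d\lambda
\]
by Fubini, while a similar computation without truncation gives the inequality for $\widehat{\vol}_\chi$. The reason one only gets an inequality in general is that the vague convergence of $\mu_n$ may lose mass at $-\infty$ when $G_{\overline{D},\mathbf{z}}$ is unbounded below, so $\int t\, d\mu_n$ may be strictly smaller than $\int t\, d(G_{\overline{D},\mathbf{z}})_*\lambda$; when $\inf G_{\overline{D},\mathbf{z}} > -\infty$, the support of each $\mu_n$ is uniformly bounded below and dominated convergence upgrades the inequality to an equality. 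The main obstacle is the Fujita-type step identifying $\Delta_{\mathbf{z}}(W_\bullet^{(t)})$ with the superlevel set of $G_{\overline{D},\mathbf{z}}$; this requires verifying the ``containment of a large semigroup'' hypothesis of \cite{LazMus, KavehKhovanskii} for the filtered pieces, which is where the super-multiplicativity of the filtration plays an essential role.
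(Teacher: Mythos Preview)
Your proposal is correct and is precisely the argument of Boucksom--Chen that the paper invokes; the paper itself gives no self-contained proof but simply cites \cite[Theorem~1.11, Theorems~2.8 and~3.1]{BoucksomChen} (and \cite[Theorem~1.2.1]{MoriwakiMZ2012}), noting only that in the function field case the Gillet--Soul\'e input should be replaced by its geometric analogue \cite[Theorem~2.4]{ChenMathZ}. Your outline unpacks exactly that proof: the super-multiplicative $\bR$-filtration, the Minkowski/Riemann--Roch comparison relating $\widehat{h}^0$ and $\nu(\mathbb{B}(V_n))$ to the jump numbers, and the Okounkov-body limit identifying the empirical jump distribution with $(G_{\overline{D},\mathbf{z}})_*\lambda$, with the mass-escape caveat explaining the inequality for $\widehat{\vol}_\chi$.
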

\begin{proof} The result  follows from \cite[Theorem 1.11]{BoucksomChen} as in \cite[Theorems 2.8 and 3.1]{BoucksomChen}  (see also \cite[Theorem 1.2.1]{MoriwakiMZ2012}). In the function field case, one should replace Gillet-Soulé's result used in the proof of \cite[Theorem 2.8]{BoucksomChen} by its geometric analogue, see for example \cite[Theorem 2.4]{ChenMathZ}. Alternatively, one can see Theorem \ref{thmXiVOlOkounkov} as a corollary of \cite[Theorem 6.4.9]{CMadeliccurves}, which is a much more general statement valid for adelic curves.
\end{proof}

\section{Application to Zhang's theorem}\label{sectionZhang}

Let $K$ be a global field and  let $X \rightarrow \Spec K$ be a projective, normal and geometrically integral variety of dimension $d := \dim X \geq 1$.  Let $\overline{D} = (D,(g_v)_{v \in \Sigma_K})$ be an adelic $\bR$-Cartier divisor on $X$.  Our goal in this section is to prove Theorem \ref{thmineqZhangintro} in the introduction. We will derive it from Boucksom--Chen's theorem \ref{thmXiVOlOkounkov} and the following proposition, which follows immediately from Theorem \ref{thmpamessmin} \eqref{itempamessmin} and Lemma \ref{lemmapamcf}.  We fix a choice of parameters $\mathbf{z}$ centred at a regular point $P \in X(\overline{K})$.
 \begin{prop}\label{propEssminmaxcf}
 If $D$ is big, then
\[\Essmin(\overline{D}) \geq \max_{\alpha \in \Delta_{\mathbf{z}}(D)}G_{\overline{D},\mathbf{z}}(\alpha) = \pam(\overline{D}),\]
 with equality if  $\overline{D}$ is semi-positive. 
 \end{prop}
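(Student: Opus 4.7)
The proposition is essentially a direct combination of two results already established in the paper, so the plan is short: I would explicitly assemble them and verify that the hypotheses match.

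First, I would invoke Theorem \ref{thmpamessmin}\eqref{itempamessmin}, which, under the assumption that $D$ is big, gives the inequality $\Essmin(\overline{D}) \geq \pam(\overline{D})$, with equality when $\overline{D}$ is semi-positive. Both parts of the proposition's assumptions (bigness of $D$, and optionally semi-positivity of $\overline{D}$) align exactly with the hypotheses of that theorem, so no additional reduction is needed.

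Next, I would apply Lemma \ref{lemmapamcf}, which identifies the asymptotic maximal slope with the maximum of the Boucksom--Chen concave transform:
\[
\pam(\overline{D}) = \max_{\alpha \in \Delta_{\mathbf{z}}(D)} G_{\overline{D},\mathbf{z}}(\alpha),
\]
again valid whenever $D$ is big (so that the Okounkov body is a genuine convex body and the concave transform is defined on it). Chaining this identity with the inequality from the previous step yields
\[
\Essmin(\overline{D}) \geq \pam(\overline{D}) = \max_{\alpha \in \Delta_{\mathbf{z}}(D)} G_{\overline{D},\mathbf{z}}(\alpha),
\]
and the equality statement in Theorem \ref{thmpamessmin}\eqref{itempamessmin} promotes this to an equality of all three quantities as soon as $\overline{D}$ is semi-positive.

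There is essentially no obstacle: the only thing worth double-checking is that Lemma \ref{lemmapamcf} provides a \emph{maximum} (attained on $\Delta_{\mathbf{z}}(D)$) rather than merely a supremum, which the paper has already ensured via the upper-semicontinuity and concavity of $G_{\overline{D},\mathbf{z}}$ on the compact convex body $\Delta_{\mathbf{z}}(D)$. Thus the proof is a one-line citation chain, and no further computation or argument is required.
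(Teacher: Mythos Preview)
Your proposal is correct and matches the paper's own approach exactly: the paper states that the proposition ``follows immediately from Theorem \ref{thmpamessmin} \eqref{itempamessmin} and Lemma \ref{lemmapamcf},'' which is precisely the citation chain you describe. No additional argument is needed.
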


We begin with a variant of Theorem \ref{thmineqZhangintro} involving the arithmetic volumes $\widehat{\vol}(\overline{D})$ and $\widehat{\vol}_{\chi}(\overline{D})$. 
\begin{theorem}\label{thmineqZhang} 
 Assume that $D$ is big. 
\begin{enumerate}
\item\label{thmineqZhang1} We have 
\[(d+1)[K:K_0]\Essmin(\overline{D}) \geq \frac{\widehat{\vol}_{\chi}(\overline{D})}{\vol(D)},\]
with equality if and only if $G_{\overline{D},\mathbf{z}}(\alpha) = \Essmin(\overline{D})$ for every $\alpha \in  \Delta_{\mathbf{z}}(D)$. 
\item\label{thmineqZhang2} Assume that $\overline{D}$ is pseudo-effective. Then 
\[(d+1)[K:K_0]\Essmin(\overline{D}) \geq \frac{\widehat{\vol}(\overline{D})}{\vol(D)},\]
with equality if and only if $\max \{0,G_{\overline{D},\mathbf{z}}(\alpha)\} = \Essmin(\overline{D})$ for every $\alpha \in  \Delta_{\mathbf{z}}(D)$. 
\end{enumerate}
\end{theorem}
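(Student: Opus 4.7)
\medskip

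The plan is to combine Boucksom--Chen's integral formulae (Theorem \ref{thmXiVOlOkounkov}) with the pointwise bound
\[G_{\overline{D},\mathbf{z}}(\alpha) \leq \max_{\beta \in \Delta_{\mathbf{z}}(D)} G_{\overline{D},\mathbf{z}}(\beta) = \pam(\overline{D}) \leq \Essmin(\overline{D})\]
supplied by Proposition \ref{propEssminmaxcf}, together with the volume identity $\vol(\Delta_{\mathbf{z}}(D)) = \vol(D)/d!$. No semi-positivity is needed because we only use the \emph{inequality} half of Proposition \ref{propEssminmaxcf}.

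For part \eqref{thmineqZhang1}, Boucksom--Chen gives
\[\widehat{\vol}_\chi(\overline{D}) \leq (d+1)![K:K_0] \int_{\Delta_{\mathbf{z}}(D)} G_{\overline{D},\mathbf{z}}\, d\lambda.\]
Bounding the integrand pointwise by $\Essmin(\overline{D})$ and using $\vol(\Delta_{\mathbf{z}}(D)) = \vol(D)/d!$ yields the announced inequality after dividing by $\vol(D)$. For part \eqref{thmineqZhang2}, first observe that Lemma \ref{lemmaineqessmin} gives $\Essmin(\overline{D}) \geq 0$ under pseudo-effectivity, so that $\max\{0, G_{\overline{D},\mathbf{z}}\} \leq \Essmin(\overline{D})$ pointwise. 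Integrating the exact equality
\[\widehat{\vol}(\overline{D}) = (d+1)![K:K_0] \int_{\Delta_{\mathbf{z}}(D)} \max\{0, G_{\overline{D},\mathbf{z}}\}\, d\lambda\]
and dividing by $\vol(D)$ gives the claimed inequality.

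For the equality cases, I would trace back the chain of inequalities. In \eqref{thmineqZhang1}, overall equality forces
\[\int_{\Delta_{\mathbf{z}}(D)} \bigl(\Essmin(\overline{D}) - G_{\overline{D},\mathbf{z}}\bigr)\, d\lambda = 0;\]
since the integrand is non-negative, $G_{\overline{D},\mathbf{z}} = \Essmin(\overline{D})$ Lebesgue-almost everywhere on $\Delta_{\mathbf{z}}(D)$ (this a fortiori gives $\inf G_{\overline{D},\mathbf{z}} > -\infty$, making the Boucksom--Chen bound an equality as needed). Conversely if $G_{\overline{D},\mathbf{z}} \equiv \Essmin(\overline{D})$ then both steps of the chain are equalities. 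Part \eqref{thmineqZhang2} is analogous using $\max\{0, G_{\overline{D},\mathbf{z}}\}$ in place of $G_{\overline{D},\mathbf{z}}$.

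The only nontrivial step is promoting a.e.\ equality to pointwise equality. This uses the upper semi-continuity of $G_{\overline{D},\mathbf{z}}$ recalled in section \ref{sectionBCtransform}: the set $\{G_{\overline{D},\mathbf{z}} < \Essmin(\overline{D})\}$ is open in $\Delta_{\mathbf{z}}(D)$, and an open subset of a convex body with positive volume has positive Lebesgue measure; since the a.e.\ equality makes this set have measure zero, it must be empty. The same argument applies to $\max\{0, G_{\overline{D},\mathbf{z}}\}$, which is upper semi-continuous as the maximum of two upper semi-continuous functions, giving the characterization of equality in part \eqref{thmineqZhang2}. This is the only step requiring care; the rest is a direct combination of previously stated results.
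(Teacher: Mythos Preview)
Your proof is correct and follows essentially the same route as the paper: combine the pointwise bound $G_{\overline{D},\mathbf{z}} \leq \pam(\overline{D}) \leq \Essmin(\overline{D})$ from Proposition \ref{propEssminmaxcf} with the Boucksom--Chen integral formulae and the identity $\vol(\Delta_{\mathbf{z}}(D)) = \vol(D)/d!$. Your handling of the equality case is in fact more explicit than the paper's, which simply asserts that ``maximum equals average implies constant'' without spelling out the upper semi-continuity (or concavity) argument you supply.
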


In the special case when $\overline{D}$ is a toric metrized divisor on a toric variety, this theorem was proved by Burgos Gil, Philippon and Sombra \cite{BPSmin}. One warning~:  the definition of the volumes $\widehat{\vol}(\overline{D})$ and $\widehat{\vol}_{\chi}(\overline{D})$ used in \cite{BPSmin} are normalized by the degree $[K:K_0]$.

\begin{proof} Assume that $D$ is big. By Proposition \ref{propEssminmaxcf} and Theorem \ref{thmXiVOlOkounkov}, we have 
\begin{equation*}
\begin{split}
\Essmin(\overline{D})  \geq \max_{\alpha \in \Delta_{\mathbf{z}}(D)}G_{\overline{D},\mathbf{z}}(\alpha)&  \geq \frac{d!}{\vol(D)} \int_{\Delta_{\mathbf{z}}(D)} G_{\overline{D},\mathbf{z}} d\lambda\\
& \geq \frac{\widehat{\vol}_{\chi}(\overline{D})}{(d+1)[K:K_0]\vol(D)}.
\end{split}
\end{equation*}
If $(d+1)[K:K_0]\Essmin(\overline{D}) = \frac{\widehat{\vol}_{\chi}(\overline{D})}{\vol(D)}$, then by the above we have
\[\Essmin(\overline{D}) = \max_{\alpha \in \Delta_{\mathbf{z}}(D)}G_{\overline{D},\mathbf{z}}(\alpha) = \frac{1}{\vol(\Delta_{\mathbf{z}}(D))}\int_{\Delta_{\mathbf{z}}(D)} G_{\overline{D},\mathbf{z}} d\lambda,\]
so $G_{\overline{D},\mathbf{z}}=\Essmin(\overline{D})$ is constant. Conversely, if $G_{\overline{D},\mathbf{z}}$ is constant equal to $\Essmin(\overline{D})$ then applying Proposition \ref{propEssminmaxcf} and Theorem \ref{thmXiVOlOkounkov} again we obtain 
\[\frac{\widehat{\vol}_{\chi}(\overline{D})}{[K:K_0]\vol(D)} = \frac{(d+1)!}{\vol(D)}\vol(\Delta_{\mathbf{z}}(D)) \max_{\alpha \in \Delta_{\mathbf{z}}(D)}G_{\overline{D},\mathbf{z}} (\alpha) = (d+1)\Essmin(\overline{D}).\]

 If  $\overline{D}$ is pseudo-effective, then $\max_{\alpha \in \Delta_{\mathbf{z}}(D)}G_{\overline{D},\mathbf{z}}(\alpha)  \geq 0$ by remark \ref{remapamcursor} and Lemma \ref{lemmapamcf}. The proof of \eqref{thmineqZhang2} is the exact analogue of \eqref{thmineqZhang1} when the function $G_{\overline{D},\mathbf{z}}$ is replaced by $ \max\{0,G_{\overline{D},\mathbf{z}}\}$.
\end{proof}

We will deduce Theorem \ref{thmineqZhangintro} from Theorem \ref{thmineqZhang} by comparing the $\chi$-volume $\widehat{\vol}_{\chi}(\overline{D})$ with the   \textit{height} $h_{\overline{D}}(X)$ of $X$. We refer the reader to \cite[section 2.5]{BPS14} for the definition of the height $h_{\overline{D}}(X)$ of $X$. When $K$ is a number field and $\overline{D}$ is integrable (see section \ref{sectionintersection}), then $[K:\bQ]h_{\overline{D}}(X) = \widehat{\deg}(\overline{D}^{d+1})$. If $K$ is a function field and the $D$-Green functions of $\overline{D}$ are all induced by a divisor $\mathcal{D} \in \Div(\mathcal{X})_{\bR}$ on a normal model $\mathcal{X}$ of $X$, then 
$[K:k(T)]h_{\overline{D}}(X) = \mathcal{D}^{d+1}$
is the top self-intersection number of $\mathcal{D}$. When $\overline{D}$ is semi-positive, it turns out that  $h_{\overline{D}}(X)$ coincides with $\widehat{\vol}_{\chi}(\overline{D})$ up to normalization, by the following theorem of Moriwaki \cite{MoriwakiMAMS}. 
\begin{theorem}[Moriwaki]\label{thmvolchiMoriwaki} If $\overline{D}$ is semi-positive, then $\widehat{\vol}_{\chi}(\overline{D}) = [K:K_0]h_{\overline{D}}(X)$. 
\end{theorem}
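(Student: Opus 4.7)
The plan is a two-step reduction: first bring the statement onto a fixed model with smooth data, then invoke an asymptotic Riemann--Roch/Hilbert--Samuel type estimate. Observe first that both sides of the claimed equality are continuous functions of $\overline{D}$ when it varies in the cone of semi-positive adelic $\bR$-Cartier divisors. For $\widehat{\vol}_\chi$, the relevant continuity is analogous to Theorem \ref{thmcontvol} and is proved by Moriwaki in the number field setting. For the height $h_{\overline{D}}(X)$, continuity (multilinearity in each semi-positive slot plus uniform control on Green function perturbations) follows from the multilinear intersection product recalled in subsection \ref{sectionintersection}. Thus it suffices to prove the identity when $\overline{D} = \overline{\mathcal{D}}^{\mathrm{ad}}$ for a semi-positive arithmetic $\bR$-Cartier divisor $\overline{\mathcal{D}}$ on a fixed normal model $(\mathcal{X},\mathcal{D})$ over $\mathcal{S}$ with smooth plurisubharmonic Green functions at the archimedean places.

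On such a model, one further reduces by linearity and approximation to the case of an \emph{ample} arithmetic $\bZ$-Cartier divisor on a generically smooth $\mathcal{X}$: writing a semi-positive $\overline{\mathcal{D}}$ as a limit of ample divisors $\overline{\mathcal{D}} + \varepsilon\overline{\mathcal{A}}$ with $\varepsilon\to 0^+$, the continuity asserted above transports the identity from the ample case to the semi-positive one. The ample case is then the content of the arithmetic Hilbert--Samuel theorem: for ample $\overline{\mathcal{D}}$ one has the asymptotic expansion
\[
\widehat{\chi}\bigl(H^0(\mathcal{X},n\mathcal{D}),(\|\cdot\|_{v,\sup})_{v\in\Sigma_K}\bigr) = \frac{\widehat{\deg}(\overline{\mathcal{D}}^{d+1})}{(d+1)!}\,n^{d+1}+o(n^{d+1}),
\]
where $\widehat{\chi}$ denotes the arithmetic Euler characteristic built from the adelic lattice $\mathbb{B}(V_n)\cap V_n$. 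This is due to Gillet--Soulé (smooth case), with extensions by Abbes--Bouche, Zhang and Moriwaki. Combined with Minkowski's theorem, which compares $\widehat{\chi}$ with $\ln \nu(\mathbb{B}(V_n))$ up to $O(n^d)$, this expansion produces exactly $\widehat{\vol}_\chi(\overline{\mathcal{D}}^{\mathrm{ad}}) = \widehat{\deg}(\overline{\mathcal{D}}^{d+1}) = [K:\bQ]h_{\overline{D}}(X)$.

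In the function field case the analysis is much cleaner: as noted in example \ref{examplevolcdf}, $\widehat{\vol}_\chi(\overline{D})$ coincides with the geometric volume $\vol(\mathcal{D})$, and for a relatively nef $\mathcal{D}$ that is nef globally (which one reduces to after perturbing by the pullback of a positive divisor on $C_K$) the asymptotic Riemann--Roch theorem for projective varieties over $k$ yields $\vol(\mathcal{D}) = \mathcal{D}^{d+1} = [K:k(T)]h_{\overline{D}}(X)$. One then removes the auxiliary perturbation by continuity.

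The main obstacle is the arithmetic Hilbert--Samuel expansion itself: in the generically smooth ample case it rests on Bismut--Vasserot's asymptotic formula for analytic torsion, and the extension to a non-smooth normal $\mathcal{X}$ (needed because our models are only assumed normal) requires either a generic resolution together with careful error control on the exceptional locus or the direct approach of Moriwaki using generalized Green functions. All the other ingredients---continuity of $\widehat{\vol}_\chi$, continuity of the arithmetic intersection product, Minkowski-type comparison between covolume and $\widehat{h}^0$, and approximation of semi-positive by ample---are essentially formal once this central input is in place.
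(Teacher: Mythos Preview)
Your number field argument is essentially the paper's: reduce by continuity to a fixed model with smooth data, then to the ample case, and invoke arithmetic Hilbert--Samuel. That is exactly what Moriwaki does in the reference the paper cites, so nothing to add there.

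The function field paragraph, however, contains a genuine error. Example \ref{examplevolcdf} concerns $\widehat{\vol}(\overline{D})$, not $\widehat{\vol}_\chi(\overline{D})$; it identifies $\widehat{H}^0(X,n\overline{D})$ with $H^0(\mathcal{X},n\mathcal{D})$ and hence $\widehat{\vol}(\overline{D})=\vol(\mathcal{D})$. The $\chi$-volume is a different object: unwinding the adelic ball measure for a function field gives
\[
\widehat{\vol}_\chi(\overline{D})=\limsup_{n\to\infty}\frac{\deg\bigl(\pi_*(\mathcal{L}^{\otimes n})\bigr)}{n^{d+1}/(d+1)!},
\]
i.e.\ an asymptotic \emph{degree} on $C_K$, not an $h^0$ on $\mathcal{X}$. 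Your chain ``$\widehat{\vol}_\chi(\overline{D})=\vol(\mathcal{D})=\mathcal{D}^{d+1}$'' therefore breaks at the first equality, and perturbing to a globally nef $\mathcal{D}$ does not repair it: you need to pass from $\deg(\pi_*\mathcal{L}^{\otimes n})=\chi(C_K,\pi_*\mathcal{L}^{\otimes n})+O(n^d)$ to $\chi(\mathcal{X},\mathcal{L}^{\otimes n})$, and that requires $R^j\pi_*\mathcal{L}^{\otimes n}=0$ for $j>0$ and $n\gg 1$, which is why the paper reduces to the \emph{relatively ample} case rather than merely globally nef. Once Leray degenerates, asymptotic Riemann--Roch on $\mathcal{X}$ gives $\chi(\mathcal{X},\mathcal{L}^{\otimes n})=\frac{\mathcal{D}^{d+1}}{(d+1)!}n^{d+1}+O(n^d)$ and the identity follows. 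So the missing idea is: identify $\widehat{\vol}_\chi$ with the asymptotic degree of the pushforward, reduce to relatively ample so the Leray spectral sequence collapses, and only then apply asymptotic Riemann--Roch.
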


\begin{proof} In the number field case, this result is proved in \cite[proof of Theorem 5.3.2]{MoriwakiMAMS}. We briefly explain how to adapt the proof of \cite{MoriwakiMAMS} when $K$ is a a function field. By a continuity argument as in \cite{MoriwakiMAMS}, we reduce the problem to the case where all the $D$-Green functions of $\overline{D}$ are induced by a normal model $(\mathcal{X}, \mathcal{D})$ of $(X,D)$ with $\mathcal{D} \in \Div(\mathcal{X})_\bZ$ relatively ample with respect to the corresponding morphism $\pi \colon \mathcal{X} \rightarrow C_K$. Let $\mathcal{L} = \cO_{\mathcal{X}}(\mathcal{D})$. In this case,
\[[K:k(T)]h_{\overline{D}}(X) = \mathcal{L}^{d+1}\  \text{ and }  \ \widehat{\vol}_{\chi}(\overline{D}) = \limsup_{n \rightarrow + \infty} \frac{\deg(\pi_*(\mathcal{L}^{\otimes n}))}{n^{d+1}/(d+1)!},\]
where $\deg(\pi_*(\mathcal{L}^{\otimes n}))$ is the geometric degree of the vector bundle $\pi_*(\mathcal{L}^{\otimes n}))$ on $C_K$. By the Riemann-Roch formula, we have 
\[\deg(\pi_*(\mathcal{L}^{\otimes n})) = h^0(C_K,\pi_*(\mathcal{L}^{\otimes n})) - h^1(C_K,\pi_*(\mathcal{L}^{\otimes n})) + O(n^d).\]
 Since $\mathcal{L}$ is relatively ample, we have  $H^i(C_K,\pi_*(\mathcal{L}^{\otimes n})) = H^i(\mathcal{X},\mathcal{L}^{\otimes n})$ for all $i \geq 0$ and $n \gg 1$. In particular, $H^i(\mathcal{X},\mathcal{L}^{\otimes n}) = 0$ for all $n \gg 1$ and $i > 1 = \dim C_K$. We conclude that $\widehat{\vol}_{\chi}(\overline{D}) =\mathcal{L}^d$ by the asymptotic Riemann--Roch theorem.
\end{proof}
Combining  Proposition \ref{propEssminmaxcf} with Theorems \ref{thmineqZhang} and \ref{thmvolchiMoriwaki}, we obtain Theorem \ref{thmineqZhangintro} in the introduction. We reproduce the statement below.  

\begin{coro}\label{coroineqZhangheight} If $\overline{D}$ is semi-positive and $D$ is big, then
\[\Essmin(\overline{D}) \geq \frac{h_{\overline{D}}(X) }{(d+1)D^{d}},\]
with  equality if and only if the following equivalent conditions are satisfied~:
\begin{enumerate}
\item $G_{\overline{D},\mathbf{z}}$ is constant;
\item\label{condeqZ2} the sequence $(\lambda_{\max,n}(\overline{D})/n)_{n \geq 1}$ converges to $\frac{h_{\overline{D}}(X) }{(d+1)D^{d}}$.
\end{enumerate}
In that case, $G_{\overline{D},\mathbf{z}}(\alpha) = \Essmin(\overline{D})$ for any $\alpha \in \Delta_{\mathbf{z}}(\overline{D})$.
\end{coro}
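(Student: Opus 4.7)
The plan is to assemble the corollary directly from three ingredients already established in the paper: Proposition \ref{propEssminmaxcf} (linking $\Essmin$, the maximum of $G_{\overline{D},\mathbf{z}}$ and $\pam$), Theorem \ref{thmineqZhang}\eqref{thmineqZhang1} (the sharp inequality for the $\chi$-volume), and Theorem \ref{thmvolchiMoriwaki} (identification of $\widehat{\vol}_{\chi}(\overline{D})$ with $[K:K_0]h_{\overline{D}}(X)$ in the semi-positive case). No new argument is needed beyond combining these.

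First I would start from Theorem \ref{thmineqZhang}\eqref{thmineqZhang1}, which gives
\[
(d+1)[K:K_0]\Essmin(\overline{D}) \geq \frac{\widehat{\vol}_{\chi}(\overline{D})}{\vol(D)},
\]
with equality if and only if $G_{\overline{D},\mathbf{z}} \equiv \Essmin(\overline{D})$ on $\Delta_{\mathbf{z}}(D)$. Since $\overline{D}$ is semi-positive and $D$ is big (hence nef and big, so $\vol(D)=D^{d}$), Theorem \ref{thmvolchiMoriwaki} gives $\widehat{\vol}_{\chi}(\overline{D}) = [K:K_0]h_{\overline{D}}(X)$. Substituting and dividing by $(d+1)[K:K_0]$ yields the displayed inequality
\[
\Essmin(\overline{D}) \geq \frac{h_{\overline{D}}(X)}{(d+1)D^{d}},
\]
and the equality criterion of Theorem \ref{thmineqZhang}\eqref{thmineqZhang1} translates verbatim into condition (1), with the asserted value $G_{\overline{D},\mathbf{z}}(\alpha) = \Essmin(\overline{D})$ in that case.

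It remains to show the equivalence of (1) and (2) under equality. By Proposition \ref{propEssminmaxcf} we have $\pam(\overline{D}) = \Essmin(\overline{D})$, and by Proposition \ref{proppamconverge} the sequence $(\lambda_{\max,n}(\overline{D})/n)_{n\geq 1}$ actually converges to $\pam(\overline{D})$ since $D$ is big. Therefore
\[
\lim_{n\to +\infty} \frac{\lambda_{\max,n}(\overline{D})}{n} \;=\; \Essmin(\overline{D}),
\]
so condition (2) is equivalent to the equality $\Essmin(\overline{D}) = h_{\overline{D}}(X)/((d+1)D^{d})$, which by the first paragraph is equivalent to condition (1). This closes the chain of equivalences.

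There is no real obstacle: the corollary is a formal consequence of the already-established machinery. The only mild point to check is that under semi-positivity $D$ is nef so that $\vol(D) = D^{d}$, which allows the passage from $\widehat{\vol}_{\chi}/\vol(D)$ to $h_{\overline{D}}(X)/D^{d}$; this is transparent from the definition of semi-positivity in terms of relatively nef models.
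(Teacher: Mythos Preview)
Your proof is correct and follows essentially the same route as the paper's own proof: you combine Theorem \ref{thmineqZhang}\eqref{thmineqZhang1}, Theorem \ref{thmvolchiMoriwaki}, Proposition \ref{propEssminmaxcf}, and Proposition \ref{proppamconverge} in the same way, with the same observation that semi-positivity forces $D$ nef so that $\vol(D)=D^d$. The only cosmetic difference is ordering: the paper first packages the three conditions ``$G_{\overline{D},\mathbf{z}}$ constant equal to $\Essmin(\overline{D})$'', ``$G_{\overline{D},\mathbf{z}}$ constant'', and ``$\lim_n \lambda_{\max,n}/n = \widehat{\vol}_\chi(\overline{D})/([K:K_0](d+1)D^d)$'' as an equivalence and then invokes Theorem \ref{thmvolchiMoriwaki}, whereas you substitute $h_{\overline{D}}(X)$ first and then close the loop.
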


\begin{proof} Assume that $D$ is big and $\overline{D}$ is semi-positive. In particular $D$ is nef and big, so that $D^d = \vol(D)> 0$. By Proposition \ref{propEssminmaxcf} and Theorem \ref{thmineqZhang}, the three following conditions are equivalent~:
\begin{enumerate}
\item $G_{\overline{D},\mathbf{z}}$ is constant equal to $\Essmin(\overline{D})$;
\item  $G_{\overline{D},\mathbf{z}}$ is constant;
\item $\lim_{n\rightarrow + \infty}(\lambda_{\max,n}(\overline{D})/n)={\widehat{\vol}_{\chi}(\overline{D})}/([K:K_0](d+1)D^{d})$.
\end{enumerate}
By Theorem  \ref{thmvolchiMoriwaki}, $\widehat{\vol}_{\chi}(\overline{D}) = [K:K_0]h_{\overline{D}}(X)$. Therefore the result follows from Theorem \ref{thmineqZhang}.
\end{proof}

\begin{rema}\label{remacritasystable} Theorem \ref{thmpamessmin} \eqref{itempamessmin} gives another approach to prove  criterion \eqref{condeqZ2} in Corollary \ref{coroineqZhangheight}, based on the theory of adelic vector spaces of Gaudron \cite{Gaudronpentes} and without using the concave transform. Assume that $D$ is big. We keep the notations of remark \ref{remadefpam}, so that for any $n \geq 1$, $\overline{V}_n = (V_n, (\|.\|_{v,\sup})_{v \in \Sigma_K})$ denotes the adelic vector space \cite{Gaudronpentes} given by the $K$-vector space $V_n = H^0(X,nD)$ equipped with the supremum norms. Since $D$ is big, $V_n \ne \{0\}$ for $n$ large enough.  Let $\widehat{\deg}(\overline{V}_n)$ be the \textit{normalized adelic degree} of $\overline{V}_n$ as defined in \cite[Definition 4.1]{Gaudronpentes} and let $\widehat{\mu}(\overline{V}_n) := \widehat{\deg}(\overline{V}_n)/\dim V_n$ be its \textit{slope}.   It follows from the definitions that
\begin{equation}\label{eqvolchipentes}
\frac{\widehat{\vol}_{\chi}(\overline{D})}{[K:K_0]\vol(D)} = (d+1)\limsup_{n \rightarrow +\infty} \frac{\widehat{\deg}(\overline{V}_n)}{n\dim(V_n)} = (d+1)\limsup_{n \rightarrow +\infty}\frac{\widehat{\mu}(\overline{V}_n)}{n}.
\end{equation}
On the other hand, we saw in remark \ref{remadefpam} that 
$\pam(\overline{D}) = \lim_{n \rightarrow +\infty}\pmax(\overline{V}_n)/n$,
where $\pmax(\overline{V}_n)$ denotes the maximal slope of $\overline{V}_n$ \cite[Definition 5.4]{Gaudronpentes}, which by definition satisfies $\pmax(\overline{V}_n) \geq \widehat{\mu}(\overline{V}_n)$ for any $n \geq 1$. It immediately follows from Theorem \ref{thmpamessmin} and \eqref{eqvolchipentes} that
\[(d+1)\Essmin(\overline{D}) \geq (d+1)\lim_{n \rightarrow +\infty}\frac{\pmax(\overline{V}_n)}{n} \geq \frac{\widehat{\vol}_{\chi}(\overline{D})}{[K:K_0]\vol(D)}.\]
If $\overline{D}$ is semi-positive, then the first inequality is actually an equality by Theorem \ref{thmpamessmin} and $\widehat{\vol}_{\chi}(\overline{D}) = [K:K_0]h_{\overline{D}}(X)$ by Theorem \ref{thmvolchiMoriwaki}. Therefore we have the equivalence
\[(d+1)\Essmin(\overline{D})= \frac{h_{\overline{D}}(X)}{\vol(D)} \Longleftrightarrow \lim_{n \rightarrow +\infty}\frac{\pmax(\overline{V}_n)}{n} = \limsup_{n \rightarrow +\infty}\frac{\widehat{\mu}(\overline{V}_n)}{n}.\]
 The equality on the right-hand side can be interpreted as an ``asymptotic semi-stability" condition (see \cite[section 5.3]{Gaudronpentes} for the classical notion of semi-stability). This gives another interesting criterion for equality to hold in Zhang's theorem \ref{thmZhangintro}. We will investigate this approach further in the next two sections, restricting our attention to projective spaces.
\end{rema}

\section{Hermitian vector spaces}\label{sectionhermitian}

Let $K$ be a global field and let $\overline{K}$ be an algebraic closure of $K$. In this section we recall the definitions of hermitian vector spaces on $K$, following Gaudron \cite{Gaudronpentes, Gaudronart18} and Gaudron--Rémond \cite{GR13}. Successive minima and slopes are defined in subsections \ref{sectionheightdegreevs} and \ref{paragslopes}.  

\subsection{Definitions}\label{sectionhermitianvsdefi}
Given a place $v \in \Sigma_K$ and an integer $d \geq 1$, we fix an algebraic closure $\bC_v$ of the completion $K_v$ of $K$ at $v$.  We define a norm $\|.\|_{2,v}$ on $\bC_v^d$ by
\[\forall \ \mathbf{x} = (x_1, \ldots, x_d) \in \bC_v^d, \ \ \|\mathbf{x}\|_{2,v} = \left\{\begin{tabular}{ll}
$\left(\sum_{i = 1}^d |x_i|_v^2\right)^{1/2}$ & if $v | \infty$,\\
$\max_{1 \leq i \leq d} |x_i|_v$ & if $v \nmid \infty$
\end{tabular} \right. .\]
We denote by $\mathbb{A}_K$ the ring of ad\`{e}les of $K$.

\begin{defi}\label{defihermitianvs} A \textit{hermitian $K$-vector space} is the data $\overline{E} = (E, (\|.\|_{\overline{E},v})_{v \in \Sigma_K})$ of a finite dimensional $K$-vector space $E$ and for each place $v \in \Sigma_K$, a norm $\|.\|_{\overline{E},v}$ on $E \otimes_K \bC_v$ such that the following condition holds. There exist a $K$-basis $(e_1, \ldots, e_d)$ of $E$ and an adelic matrix $A = (A_v)_{v \in \Sigma_K} \in \GL_d(\mathbb{A}_K)$ such that for any $v \in \Sigma_K$ and $\mathbf{x} = (x_1, \ldots, x_d) \in \bC_v^d$, we have
\[\|x_1e_1+ \cdots + x_de_d\|_{\overline{E},v} = \|A_v \mathbf{x}\|_{2,v}.\]
A \textit{hermitian $\overline{K}$-vector space} is a hermitian vector space defined on some finite extension of $K_0=\bQ$ or $k(T)$.
\end{defi}

\begin{rema} If $K'$ is an algebraic extension of $\bQ$, the notion of $K'$-hermitian vector space coincides with the one of \textit{rigid adelic space} on $K'$ introduced by Gaudron and Rémond \cite{GRSiegel}. 
\end{rema}

Let $\overline{E} = (E,(\|.\|_{\overline{E},v})_{v \in \Sigma_K})$ and $\overline{E}' = (E',(\|.\|_{\overline{E}',v})_{v \in \Sigma_K})$ be two hermitian vector spaces on $K$ of dimension $d,d'\geq 1$ respectively. Given a place $v \in \Sigma_K$, we say that a basis $(f_1, \ldots, f_d)$ of $E\otimes_K \bC_v$ is \textit{orthonormal} if 
\[\|x_1f_1 + \cdots + x_df_d\|_{\overline{E},v} = \|\mathbf{x}\|_{2,v}\]
for any $\mathbf{x} = (x_1, \ldots, x_d) \in \bC_v^d$. By definition, such a basis exists for any $v \in \Sigma_K$.

\subsection{Operations}
We briefly recall some basic operations on hermitian vector spaces. We refer the reader to \cite[section 3.3]{Gaudronpentes} for details.
\subsubsection{Subspace and quotient} We say that $ \overline{E}' \subset \overline{E}$ is a hermitian subspace of $\overline{E}$ if $E' \subset E$ and if for every place $v \in \Sigma_K$, $\|.\|_{\overline{E}',v}$ is the restriction of $\|.\|_{\overline{E},v}$ to $E' \otimes_K \bC_v$. We define the quotient hermitian space $\overline{E}/\overline{E}'$ by considering the quotient norms.

\subsubsection{Dual space}\label{sectionDual} The dual $\overline{E}^\vee = (E^\vee,(\|.\|_{\overline{E}^\vee,v})_{v \in \Sigma_K})$ of $\overline{E}$ is the hermitian vector space given by the dual vector space $E^\vee = \Hom_K(E,K)$ equipped with the usual dual norms on $E^\vee \otimes_K \bC_v$.  The bidual ${(\overline{E}^{\vee})}^{\vee}$ is isometrically isomorphic to $\overline{E}$ by \cite[Remark 3.6]{Gaudronpentes}. This observation will be important in the sequel.

\subsubsection{Tensor product and symmetric power}\label{sectionSym} For each $v \in \Sigma_K$, we define a norm $\|.\|_{\overline{E}\otimes \overline{E}',v}$ on $E\otimes_K E' \otimes_K \bC_v$ as follows.  Let $(e_1, \ldots, e_d)$ and $(e'_1, \ldots, e'_{d'})$ be orthonormal bases for $E \otimes_K \bC_v$ and $E'\otimes_K \bC_v$. Then $\|.\|_{\overline{E}\otimes \overline{E}',v}$ is defined to be the unique norm such that the basis $(e_i \otimes e'_j)_{1 \leq i \leq d, 1 \leq j \leq d'}$ is orthonormal. The hermitian tensor product is the hermitian vector space $\overline{E}\otimes_K \overline{E}' =(E \otimes_K E', (\|.\|_{\overline{E}\otimes \overline{E}',v})_{v \in \Sigma_K})$. For any integer $n \geq 1$, the symmetric power $S^n\overline{E}$ is the hermitian vector space $(S^nE, (\|.\|_{S^n\overline{E},v})_{v \in \Sigma_K})$ given by the quotient hermitian structure of $\overline{E}^{\otimes n}$.

\subsubsection{Wedge product} Let $r \in \{1, \ldots, d\}$. We define the $r$-th wedge product $\wedge^r \overline{E}$ of $\overline{E}$ to be the hermitian vector space $(\wedge^r E, (\|.\|_{\wedge^r\overline{E},v})_{v \in \Sigma_K})$, where for each $v \in \Sigma_K$, $\|.\|_{\wedge^r\overline{E},v}$ is the norm on $\wedge^rE  \otimes_K \bC_v$ such that
\[\|\eta\|_{\wedge^r\overline{E},v} = \inf \{ \|x_1\|_{\overline{E},v} \cdots \|x_r\|_{\overline{E},v} \ | \  x_1, \ldots, x_r \in E \otimes_K \bC_v, \  \eta = x_1 \wedge \cdots \wedge x_d \}\]
for all $\eta \in \wedge^rE  \otimes_K \bC_v$.
 One warning~: as opposed to the symmetric power, the wedge product is not exactly the quotient of $\overline{E}^{\otimes r}$ (see \cite[section 2.7]{GR13}). The determinant $\det \overline{E}$ of $\overline{E}$ is by definition the hermitian vector space $\det \overline{E} = \wedge^d \overline{E}$.

\subsubsection{Scalar extension} For any finite extension $K'$ of $K$ and for any place $w \in \Sigma_{K'}$ above a place $v \in \Sigma_K$, we define a norm $\|.\|_{w}$ on $E \otimes_{K} \bC_v$ as follows~: if $\sigma_v \colon K \hookrightarrow \bC_v$ and $\sigma_w \colon K' \hookrightarrow \bC_v$ are embeddings associated to $v$ and $w$ respectively, we let
\[\left\| \sum_i (e_i \otimes_K x_i) \otimes_K^{\sigma_w} y_i \right\|_w = \left\| \sum_i e_i \otimes_K^{\sigma_v}( \sigma_w(x_i)y_i)\right\|_v \]
for any finite families of elements $e_i \in E$, $x_i \in K'$, $y_i \in \bC_v$. This gives to $E_{K'} = E\otimes_K K '$ the structure of an hermitian vector space on $K'$, denoted by $\overline{E}_{K'} = (E_{K'}, (\|.\|_{w})_{w \in \Sigma_{K'}})$.

Any $\overline{K}$-vector subspace $F$ of $E_{\overline{K}} = E \otimes_K \overline{K}$ is a $K'$-vector subspace of $E_{K'} = E \otimes_K K'$ for some finite extension $K'$ of $K$. Hence $F$ has a natural structure of $\overline{K}$-hermitian vector space $\overline{F} \subset \overline{E}_{K'}$.

\subsection{Height function and successive minima}\label{sectionheightdegreevs} Let $\overline{E}=(E, (\|.\|_v)_{v \in \Sigma_K})$ be $K$-hermitian of dimension $d \geq 1$.  For any non-zero vector $s \in E \otimes_K \overline{K}$, we define the \textit{height} of $s$ by
\[h_{\overline{E}}(s) = \sum_{w \in \Sigma_{K'}} n_w(K') \ln \|s\|_w,\]
where $K'$ is a finite extension of $K$ such that $s \in E\otimes_K K'$ (see section \ref{sectionconventions} for the definition of $n_w(K')$). This definition does not depend on the choice of the field $K'$. Moreover, we have $h_{\overline{E}}(\lambda s)=h_{\overline{E}}(s)$ for any $\lambda \in \overline{K}^\times$ by the product formula \eqref{productformula}.

 \begin{defi} Let $i \in \{1, \ldots, d\}$. For all $\lambda \in \bR$, we consider the set
 \[E(\lambda, \overline{K}) = \{s \in E\otimes_K \overline{K}\setminus \{0\} \ | \ h_{\overline{E}}(s) \leq \lambda\}.\]
  The \textit{Zhang $i$-th minimum} of $\overline{E}$ is  defined by
 \[\zeta_i(\overline{E}) = \inf \{\lambda \in \bR \ | \ \dim \mathrm{Zar}(E(\lambda, \overline{K})) \geq i\},\] 
 where $\mathrm{Zar}(E(\lambda, \overline{K}))$ denotes the Zariski closure of $E(\lambda, \overline{K})$ in $E \otimes_K \overline{K}$ (for any choice of basis $E \otimes_K \overline{K} \simeq \overline{K}^d$).
  \end{defi}
  
  This terminology is justified by the following remark.
  \begin{rema}\label{remaZmin} We denote by $\bP_K(E^\vee) = \mathrm{Proj}_K (\mathrm{Sym} E^\vee)$ the projective space associated to $E^\vee$. Let $\phi \in E^\vee$ be a non-zero vector  and let $D = (\phi) \in \Div(\bP_K(E^\vee))$. For any $v \in \Sigma_K$, we consider the $D$-Green function \[g_v \colon \bP_{K_v}(E^\vee \otimes_K K_v) \setminus \Supp(D_v) \rightarrow \bR\] defined by $g_v(x) = 2\ln \|x_\phi\|_v$, where $x_\phi \in E \otimes_K K_v$ is the unique representative of $x\in \bP_{K_v}(E^\vee \otimes_K K_v)$ with $\phi(x) =1$.  We denote by $\overline{D}$  the adelic Cartier divisor $(D, (g_v)_{v \in \Sigma_K})$. It follows from the definitions that the Zhang minima of $\overline{E}$ coincide with the successive minima of $\overline{D}$ defined in subsection \ref{sectionZmin}, namely
  $\zeta_i(\overline{E}) = \zeta_i(\overline{D})$
  for any $i \in\{1, \ldots, d\}$.
  \end{rema}
 
 \subsection{Successive slopes}\label{paragslopes}
 Let $\overline{E}=(E, (\|.\|_v)_{v \in \Sigma_K})$ be a hermitian vector space on $K$ of dimension $d\geq 1$. 
 \begin{defi}\label{defidegree} The \textit{degree} of $\overline{E}$ is the quantity 
\[\widehat{\deg} (\overline{E}) = -h_{\det \overline{E}}(s),\]
 where $s \in \det (E) \otimes_K \overline{K}$ is any non-zero vector. We put $\widehat{\deg}(\{0\}) = 0$.  The \textit{slope} of $\overline{E}$ is defined by $\widehat{\mu}(\overline{E}) = \widehat{\deg}(\overline{E})/\dim(E)$.
\end{defi}
 Note that by the product formula, this definition does not depend on the choice of $s \in \det E \setminus \{0\}$. Moreover, $\widehat{\deg} (\overline{E}_{K'})=\widehat{\deg} (\overline{E})$ for any finite extension $K'$ of $K$. In particular, Definition \ref{defidegree} naturally extends to $\overline{K}$-hermitian vector spaces.  Let $F \subset E_{\overline{K}}$ be a $\overline{K}$-vector subspace  of dimension $r \geq 1$. The degree of $\overline{F}$ is given by
\[\widehat{\deg}(\overline{F}) = - h_{\wedge^{r}\overline{E}}(\eta_F),\]
where $\eta_F$ is any non-zero vector in $\det (F) \subset \wedge^rE $. In particular, we have $\widehat{\deg}(\overline{F}) \leq -\zeta_1(\wedge^r \overline{E}) < \infty$. It follows that the  convex hull of the set 
\[\{(\dim F, \widehat{\deg}\overline{F}) \ | \ F \subset E_{\overline{K}}\}\subset \bR^2\]
is delimited from above by a concave and piecewise affine function $P_{\overline{E}} \colon [0,d] \rightarrow \bR$. 
\begin{defi}\label{defimui} For any $i \in \{1, \ldots , d\}$, the $i$-th slope of $\overline{E}$ is defined by 
$\widehat{\mu}_i(\overline{E}) = P_{\overline{E}}(i) - P_{\overline{E}}(i-1)$.
\end{defi}
Note that $ \widehat{\deg} (\overline{E}) =\sum_{i=1}^{d} \widehat{\mu}_i (\overline{E})$
and
 $\widehat{\mu}_{d} (\overline{E}) \leq \cdots \leq \widehat{\mu}_1 (\overline{E})$
by concavity of $P_{\overline{E}}$. We recall now some classical properties of the successive slopes. 
\begin{prop}\label{propmui} 
 For all $i \in \{1,\ldots, d\}$,  we have
 \begin{enumerate}
 \item\label{propmui2} $\widehat{\mu}_{i}(\overline{E}) =-\widehat{\mu}_{d -i+1} (\overline{E}^{\vee})$;
 \item\label{propmui3} $\widehat{\mu}_{i}(\overline{E}) =   \max_{E_1} \min_{E_2} \widehat{\mu}(\overline{E}_1/\overline{E}_2) = \min_{E_2} \max_{E_1}  \widehat{\mu}(\overline{E}_1/\overline{E}_2)$, 
 where $E_2 \subset E_1$ run over the subspaces of $E_{\overline{K}}$ with $\dim E_2 < i \leq \dim E_1$.
\end{enumerate}  
\end{prop}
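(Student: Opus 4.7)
For part (1), the plan is to exploit the bijection $F \leftrightarrow F^{\perp}$ between subspaces of $E_{\overline{K}}$ and $E^{\vee}_{\overline{K}}$. Because the dual of a hermitian quotient identifies isometrically with a hermitian subspace of the dual (subsection \ref{sectionDual}), the short exact sequence $0 \to \overline{F^{\perp}} \to \overline{E}^{\vee} \to \overline{F}^{\vee} \to 0$ combined with additivity of the degree yields $\widehat{\deg}(\overline{F^{\perp}}) = \widehat{\deg}(\overline{F}) - \widehat{\deg}(\overline{E})$. Therefore the affine involution $(x,y) \mapsto (d-x,\, y - \widehat{\deg}(\overline{E}))$ sends the set of points defining $P_{\overline{E}}$ bijectively onto the one defining $P_{\overline{E}^{\vee}}$; since taking concave upper envelopes commutes with affine isomorphisms, $P_{\overline{E}^{\vee}}(x) = P_{\overline{E}}(d-x) - \widehat{\deg}(\overline{E})$ for all $x \in [0,d]$. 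A direct slope computation then gives $\widehat{\mu}_{i}(\overline{E}^{\vee}) = -\widehat{\mu}_{d-i+1}(\overline{E})$, which is equivalent to the claimed identity.

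For part (2), the plan is to use the Harder--Narasimhan structure of $P_{\overline{E}}$: there exists a flag $0 = V_{0} \subsetneq V_{1} \subsetneq \cdots \subsetneq V_{m} = E_{\overline{K}}$ (defined over some finite extension of $K$) whose vertices $(\dim V_{j}, \widehat{\deg}(\overline{V}_{j}))$ are precisely the breakpoints of $P_{\overline{E}}$, with strictly decreasing consecutive slopes $\nu_{j} := \widehat{\mu}(\overline{V}_{j}/\overline{V}_{j-1})$. If $j_{0}$ denotes the unique index with $\dim V_{j_{0}-1} < i \leq \dim V_{j_{0}}$, then $\widehat{\mu}_{i}(\overline{E}) = \nu_{j_{0}}$ by construction.

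To prove $\max_{E_{1}}\min_{E_{2}} \widehat{\mu}(\overline{E}_{1}/\overline{E}_{2}) \geq \widehat{\mu}_{i}(\overline{E})$, I would take $E_{1} := V_{j_{0}}$. For any $E_{2} \subsetneq V_{j_{0}}$ with $\dim E_{2} < i$, combining $\widehat{\deg}(\overline{V}_{j_{0}}) = P_{\overline{E}}(\dim V_{j_{0}})$ with the polygon upper bound $\widehat{\deg}(\overline{E}_{2}) \leq P_{\overline{E}}(\dim E_{2})$ gives
\[
\widehat{\mu}(\overline{V}_{j_{0}}/\overline{E}_{2}) \;\geq\; \frac{P_{\overline{E}}(\dim V_{j_{0}}) - P_{\overline{E}}(\dim E_{2})}{\dim V_{j_{0}} - \dim E_{2}},
\]
and concavity of $P_{\overline{E}}$, together with the fact that its slope equals $\nu_{j_{0}}$ on $[\dim V_{j_{0}-1}, \dim V_{j_{0}}]$ and is $\geq \nu_{j_{0}}$ earlier, forces the chord slope on the right to be $\geq \nu_{j_{0}}$. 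Dually, to prove $\min_{E_{2}}\max_{E_{1}} \widehat{\mu}(\overline{E}_{1}/\overline{E}_{2}) \leq \widehat{\mu}_{i}(\overline{E})$, I would take $E_{2} := V_{j_{0}-1}$; the same polygon inequalities applied to any $E_{1} \supseteq V_{j_{0}-1}$ with $\dim E_{1} \geq i$ yield
\[
\widehat{\mu}(\overline{E}_{1}/\overline{V}_{j_{0}-1}) \;\leq\; \frac{P_{\overline{E}}(\dim E_{1}) - P_{\overline{E}}(\dim V_{j_{0}-1})}{\dim E_{1} - \dim V_{j_{0}-1}} \;\leq\; \nu_{j_{0}},
\]
since every slope of $P_{\overline{E}}$ strictly to the right of $\dim V_{j_{0}-1}$ is $\leq \nu_{j_{0}}$. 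The universal inequality $\max\min \leq \min\max$ then squeezes all three quantities to $\nu_{j_{0}} = \widehat{\mu}_{i}(\overline{E})$.

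The main delicate point will be justifying the Harder--Narasimhan flag in the $\overline{K}$-setting, where $E_{1}, E_{2}$ are allowed to be $\overline{K}$-subspaces. This is handled by picking any finite extension $K'/K$ over which all breakpoint-realizing subspaces are defined and invoking the standard HN formalism for the $K'$-hermitian space $\overline{E}_{K'}$, together with the observation that scalar extension preserves both dimensions and hermitian degrees and that any optimizing $\overline{K}$-subspace descends to some finite extension of $K$.
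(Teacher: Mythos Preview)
The paper does not actually prove this proposition: its ``proof'' is a bare citation to \cite[Propositions 18 and 19 (2)]{Gaudronart18}, with the remark that the arguments carry over to function fields. Your proposal supplies the underlying argument, and it is the standard polygon/Harder--Narasimhan proof one expects to find behind that citation. Both parts are correct.

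One small comment on the point you flag at the end. For part (2) you need the breakpoints of $P_{\overline{E}}$ to be \emph{realized} by genuine subspaces $V_j \subset E_{\overline{K}}$, not merely approximated. Your proposed fix---pass to a finite extension $K'/K$ and run the classical HN formalism for $\overline{E}_{K'}$---is the right one, but it implicitly uses that $P_{\overline{E}_{K'}}$ stabilizes and equals $P_{\overline{E}}$ for $K'$ large enough. The paper itself points to this in the remark following Proposition~\ref{propmui} (Galois descent, \cite[Proposition 19 (3)]{Gaudronart18}), so the ingredient is available; you should just cite it rather than leave the sentence as a sketch. Similarly, in part (1) you use additivity of $\widehat{\deg}$ in short exact sequences and the isometry $(\overline{E}/\overline{F})^{\vee} \simeq \overline{F}^{\perp}$; these are standard for hermitian adelic spaces but are not stated explicitly in the paper, so a one-line reference to \cite{Gaudronpentes} would be appropriate.
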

\begin{proof}
When $K$ is a number field, this is \cite[Propositions 18 and 19 (2)]{Gaudronart18}. The proofs remain valid when $K$ is a function field.
\end{proof}
We define the \textit{maximal slope} of $\overline{E}$ by $\pmax(\overline{E}) = \widehat{\mu}_1(\overline{E})$, and the \textit{minimal slope} by $\pmin(\overline{E})= \widehat{\mu}_{d}(\overline{E})$. By Proposition \ref{propmui} \eqref{propmui3}, we have 
\begin{equation*}
\begin{split}
\pmax(\overline{E}) &=  \max \{\widehat{\mu}(\overline{F}) \ | \ 0 \ne F \subset E_{\overline{K}}\},\\
  \pmin(\overline{E}) &=  \min \{\widehat{\mu}(\overline{G}) \ | \  0 \ne G \text{ quotient of } E_{\overline{K}}\}.
\end{split}
\end{equation*}
\begin{rema} When $K$ is a number field, our definition of successive slopes coincides with the ones of \cite{Gaudronpentes}, as one can see by using a classical Galois descent argument (see for example \cite[Proposition 19 (3)]{Gaudronart18}). In particular, we have
\[\pmax(\overline{E}) =  \max \{\widehat{\mu}(\overline{F}) \ | \ 0 \ne F \subset E\} \ \text{ and } \  \pmin(\overline{E}) =  \min \{\widehat{\mu}(\overline{E}/\overline{F}) \ | \  F\subsetneqq E\}.\]
\end{rema}

In the next section we will compare successive slopes and minima of hermitian vector spaces. For this purpose we need the following proposition due to Gaudron.
\begin{prop}\label{propcomp0} For every $i \in \{1, \ldots, d\}$, we have
\[0 \leq \widehat{\mu}_i(\overline{E})+ \zeta_i(\overline{E}) \leq \sup_{\dim E' = d}\left( \widehat{\mu}_{d}(\overline{E}')+ \zeta_{d}(\overline{E}')\right),\]
where the supremum is over all $\overline{K}$-hermitian vector spaces $\overline{E}'$ with $\dim E' = d$.
\end{prop}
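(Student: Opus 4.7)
My plan is to reduce $\widehat{\mu}_i(\overline{E}) + \zeta_i(\overline{E}) \geq 0$ to the case $i = 1$ applied in a well-chosen quotient. For any $\lambda > \zeta_i(\overline{E})$, the defining property of $\zeta_i$, combined with the fact that a linear subspace of dimension $<i$ cannot contain an algebraic set of dimension $\geq i$, forces $E(\lambda, \overline{K})$ not to be contained in any $\overline{K}$-linear subspace of dimension $<i$; hence it contains $i$ linearly independent vectors $s_1, \ldots, s_i$ with $h_{\overline{E}}(s_j) \leq \lambda$. Letting $0 = F_0 \subsetneq \cdots \subsetneq F_r = E_{\overline{K}}$ be the Harder--Narasimhan filtration of $\overline{E}$, with successive slopes $\mu_1 > \cdots > \mu_r$, I pick $k$ with $\dim F_{k-1} < i \leq \dim F_k$, so that $\widehat{\mu}_i(\overline{E}) = \mu_k = \pmax(\overline{E}/\overline{F}_{k-1})$. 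Since $\dim F_{k-1} < i$, at least one $s_j$ projects to a non-zero vector in $\overline{E}/\overline{F}_{k-1}$ of height $\leq \lambda$ (the quotient norms dominate the original ones), giving $\zeta_1(\overline{E}/\overline{F}_{k-1}) \leq \lambda$. The case $i=1$ is immediate from $\widehat{\mu}(\overline{K}\cdot s) = -h_{\overline{E}}(s)$ for any non-zero $s$, and applied to $\overline{E}/\overline{F}_{k-1}$ it yields $\widehat{\mu}_i(\overline{E}) = \pmax(\overline{E}/\overline{F}_{k-1}) \geq -\lambda$; letting $\lambda \to \zeta_i(\overline{E})$ concludes.

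\textbf{Upper bound: construction.} For the upper bound I will exhibit an explicit $d$-dimensional $\overline{K}$-hermitian space $\overline{E}'$ with $\widehat{\mu}_d(\overline{E}') + \zeta_d(\overline{E}') \geq \widehat{\mu}_i(\overline{E}) + \zeta_i(\overline{E})$. Keeping $k$ and $\overline{F}_k$ as above, let $\overline{L}$ be any $1$-dimensional $\overline{K}$-hermitian space of slope $\mu_k$ (obtained by scaling a trivial line appropriately), and set
\[
\overline{E}' := \overline{F}_k \oplus \overline{L}^{\oplus (d - \dim F_k)}.
\]
Then $\dim E' = d$, and since the Harder--Narasimhan slopes of a direct sum are the sorted merge of those of the summands (both with smallest slope $\mu_k$ here), one gets $\widehat{\mu}_d(\overline{E}') = \pmin(\overline{E}') = \mu_k = \widehat{\mu}_i(\overline{E})$.

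\textbf{Upper bound: comparison of essential minima.} It remains to check $\zeta_d(\overline{E}') \geq \zeta_i(\overline{E})$. The place-by-place inequality $\|(x, y)\|_v \geq \|x\|_v$, valid for any hermitian direct sum (both for non-archimedean sup-norms and archimedean Euclidean norms), implies $h_{\overline{E}'}(x, y) \geq h_{\overline{F}_k}(x)$; so the projection $\overline{E}' \to \overline{F}_k$ sends $E(\lambda, \overline{E}')$ into $E(\lambda, \overline{F}_k) \cup \{0\}$, and $\mathrm{Zar}(E(\lambda, \overline{E}'))$ sits inside $\mathrm{Zar}(E(\lambda, \overline{F}_k)) \times \overline{L}^{\oplus(d - \dim F_k)}$, a set of dimension $<d$ whenever $\lambda < \Essmin(\overline{F}_k)$; hence $\Essmin(\overline{E}') \geq \Essmin(\overline{F}_k)$. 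Since $F_k$ is Zariski closed in $E_{\overline{K}}$, the identity $E(\lambda, \overline{F}_k) = E(\lambda, \overline{E}) \cap F_k$ forces $\mathrm{Zar}(E(\lambda, \overline{F}_k)) \subset \mathrm{Zar}(E(\lambda, \overline{E}))$ and therefore $\zeta_i(\overline{F}_k) \geq \zeta_i(\overline{E})$; monotonicity of $\zeta_j$ in $j$ then yields $\Essmin(\overline{F}_k) = \zeta_{\dim F_k}(\overline{F}_k) \geq \zeta_i(\overline{E})$, closing the chain. The main technical point is this direct-sum comparison of essential minima; the rest combines the Harder--Narasimhan formalism with the elementary base case $i = 1$.
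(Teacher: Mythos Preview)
The paper does not give a self-contained argument here: it simply invokes \cite[Corollary 23 and Proposition 29]{Gaudronart18} and remarks that the proofs carry over to function fields. Your proposal, by contrast, supplies an explicit proof, and the overall strategy is sound: the lower bound via projection to a Harder--Narasimhan quotient and the base case $i=1$ is correct, and for the upper bound the padding construction $\overline{E}'=\overline{F}_k\oplus\overline{L}^{\oplus(d-\dim F_k)}$ together with the chain $\zeta_d(\overline{E}')\geq\zeta_{\dim F_k}(\overline{F}_k)\geq\zeta_i(\overline{F}_k)\geq\zeta_i(\overline{E})$ works. This is a clean self-contained route, likely close in spirit to Gaudron's original arguments, and has the merit of making the proposition independent of an external reference.

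One small point deserves tightening. In the step ``$\mathrm{Zar}(E(\lambda,\overline{E}'))$ sits inside $\mathrm{Zar}(E(\lambda,\overline{F}_k))\times\overline{L}^{\oplus(d-\dim F_k)}$'', you implicitly discard vectors $(0,y)\in E'(\lambda)$ whose $F_k$-component vanishes; these are not covered by the inequality $h_{\overline{E}'}(x,y)\geq h_{\overline{F}_k}(x)$, which needs $x\neq 0$. The fix is immediate: such vectors lie in $\{0\}\times\overline{L}^{\oplus(d-\dim F_k)}$, a set of dimension $d-\dim F_k<d$, so they do not affect the essential-minimum comparison. You should also note that when $E(\lambda,\overline{F}_k)\neq\emptyset$ its Zariski closure already contains $0$ (by $\overline{K}^\times$-invariance of the height), so including $\{0\}$ changes nothing. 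With this clarification the argument is complete.
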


\begin{proof} In the number field case, this is \cite[Corollary 23 and Proposition 29]{Gaudronart18}. The proofs remain valid when $K$ is a function field. 
\end{proof}

\section{Proof of Theorem \ref{thmminslopesintro} and applications}\label{sectionGN}

In this section we work over a number field $K$. We shall explain how to adapt the arguments for function fields  in subsection \ref{paragcdf}. 

\subsection{Proof of Theorem \ref{thmminslopesintro}}\label{sectionproofminslopes}

Let $\overline{E}$ be a hermitian $K$-vector space of dimension $d \geq 1$. We want to prove the equalities
 \[\zeta_d(\overline{E}) = \lim_{n \rightarrow +\infty} \frac{\pmax(S^n (\overline{E}^\vee))}{n}
\ \text{ and } \   \zeta_1(\overline{E}) = \lim_{n \rightarrow +\infty} \frac{\pmin(S^n (\overline{E}^\vee))}{n}.\]
To do so we apply Theorem \ref{thmpamessmin} to the divisor $\overline{D}$ of remark \ref{remaZmin}. We then compare the quantities $\lambda_{\max,n}(\overline{D})$, $\lambda_{\min,n}(\overline{D})$ with the maximal and minimal slopes of $S^n (\overline{E}^\vee)$, closely following Chen \cite[section 4.2]{Cheniccm}. The reader more familiar with the formalism of adelic line bundles can replace $\overline{D}$ by $\cO_{\bP(E^\vee)}(1)$ equipped with Fubiny-Study norms coming from $\overline{E}$.
 
  Let $\overline{D}$ be the adelic Cartier divisor on $X = \bP_K(E^\vee)$ constructed in remark \ref{remaZmin}. In particular, we have $\zeta_d(\overline{E}) = \Essmin(\overline{D})$,  $\zeta_1(\overline{E}) = \zeta_{\mathrm{abs}}(\overline{D})$ and $V_n:= H^0(X,nD) = S^n E^\vee$ for all $n \geq 1$. Moreover $\overline{D}$ is semi-positive and the underlying divisor $D$ is ample. By Theorem \ref{thmpamessmin}, we have
 \begin{equation}\label{eqminslopes}
\zeta_d(\overline{E}) = \pam(\overline{D}) \ \text{ and } \   \zeta_1(\overline{E}) = \pim(\overline{D}).
\end{equation}

  We can equip the $K$-vector space $V_n =  S^n E^\vee$ with two different families of norms $(\|.\|_{1,v})_v$, $(\|.\|_{2,v})_v$ defined as follows~: for each $v \in \Sigma_K$,
 \begin{itemize}
 \item $\|.\|_{1,v}=\|.\|_{v,\sup}$ is the supremum norm associated to $\overline{D}$,
 \item $\|.\|_{2,v}=\|.\|_{S^n(\overline{E}^\vee),v}$ is the norm induced by $\overline{E}$ constructed in subsections \ref{sectionDual} and \ref{sectionSym}.
 \end{itemize}
For any $i \in \{1,2\}$ and $t \in \bR$, we denote by $V_{i,n}^t$ the $K$-linear subspace of $V_n$ generated by vectors $\phi \in V_n$ such that $\|\phi\|_{i,v}\leq e^{-t}$ for all $v | v_0$ and $\|\phi\|_{i,v}\leq 1$ for all $v \nmid v_0$. We let 
\[\lambda_{\max}(V_n, \|.\|_i) = \sup \{t \in \bR \ | \ V_{i,n}^t \ne \{0\}\}, \ \ \lambda_{\min}(V_n, \|.\|_i) = \sup \{t \in \bR \ | \ V_{i,n}^t = V_n\}.\]
By definition, we have 
\[\pam(\overline{D}) = \lim_{n \rightarrow +\infty} \frac{\lambda_{\max}(V_n, \|.\|_1)}{n} \ \text{ and } \   \pim(\overline{D}) =\lim_{n \rightarrow +\infty} \frac{\lambda_{\min}(V_n,\|.\|_1)}{n}
\]
(the limits exist by Proposition \ref{proppamconverge}).
Let $v \in \Sigma_K$ be a place. We let $\delta(v) = 1 $ if $v$ is archimedean and $\delta(v)=0$ otherwise. By  the classical inequalities of norms (\cite[Lemma 7.6]{Gaudronpentes})
\begin{equation*}\label{ineqBGS}
\|.\|_{1,v} \leq \|.\|_{2,v} \leq \binom{n+d-1}{d-1}^{\delta(v)/2}\|.\|_{1,v},
\end{equation*}
we have \[\lambda_{\max}(V_n, \|.\|_1) =\lambda_{\max}(V_n, \|.\|_2) +o(n), \ \ \lambda_{\min}(V_n, \|.\|_1) = \lambda_{\min}(V_n, \|.\|_2) + o(n).\]
 On the other hand, 
\[\lambda_{\max}(V_n, \|.\|_2) =\pmax(S^n(\overline{E}^\vee)) +o(n)\  \text{ and } \ \lambda_{\min}(V_n, \|.\|_2) = \pmin(S^n(\overline{E}^\vee))+ o(n)\]
by \cite[Theorem 1.1]{Chencompare}. It follows that 
 \[\pam(\overline{D}) = \lim_{n \rightarrow +\infty} \frac{\pmax(S^n (\overline{E}^\vee))}{n}
\ \text{ and } \   \pim(\overline{D})) = \lim_{n \rightarrow +\infty} \frac{\pmin(S^n (\overline{E}^\vee))}{n},\]
and we conclude with \eqref{eqminslopes}.

\subsection{Symmetry defects}

Let $\overline{E}$ be a hermitian $K$-vector space of dimension $d \geq 1$. In view of Theorem \ref{thmminslopesintro}, we introduce two invariants controlling the behaviour of the maximal slope with respect to symmetric products.

\begin{defi}\label{defsymdefect} The \textit{strong symmetry defect of} $\overline{E}$ is the quantity
\[\alpha_{\mathrm{s}}(\overline{E}) = \lim_{n \rightarrow +\infty} \frac{1}{n}\left(\widehat{\mu}_{\max}(S^n(\overline{E}^\vee)) - \widehat{\mu}_{\max}((S^n\overline{E})^\vee)\right),\]
and the \textit{symmetry defect of} $\overline{E}$ is defined by
\[\alpha(\overline{E}) = \lim_{n \rightarrow +\infty} \frac{1}{n}\left(\pmax(S^n(\overline{E}^\vee)) - n\pmax(\overline{E}^\vee)\right).\]
Let $\alpha_{\sharp}$ denote either  $\alpha_s$ or $\alpha$. For any integer $d \geq 1$, we define 
\[\alpha_{\sharp}(d) = \sup \{\alpha_{\sharp}(\overline{F}) \ | \ \overline{F} \text{ hermitian } \overline{K}\text{-vector space of dimension } \leq d\}\]
and we put $\alpha_{\sharp}(0)=0$.
\end{defi}

For any integer $N \in \bN$, we denote by $H_N = 1 + 1/2 + \ldots + 1/N$ the $N$-th harmonic number (with the convention $H_0=0$).

\begin{prop}\label{propsymdefect} For any positive integer $d$, we have
\[\ln(d)/2 \leq \alpha(d) \leq \alpha_{\mathrm{s}}(d) \leq H_{d - 1}.\]
\end{prop}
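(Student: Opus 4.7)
The first step is to translate the two defects into combinations of Zhang minima and slopes of $\overline{E}$. Applying Theorem \ref{thmminslopesintro} to $\overline{E}$ gives $\lim_n\pmax(S^n\overline{E}^{\vee})/n=\zeta_d(\overline{E})$, and applied to $\overline{E}^{\vee}$ it gives $\lim_n\pmin(S^n\overline{E})/n=\zeta_1(\overline{E}^{\vee})$; combined with $\pmax(\overline{V}^{\vee})=-\pmin(\overline{V})$ from Proposition \ref{propmui}, this yields the identities
\[\alpha(\overline{E})=\zeta_d(\overline{E})+\pmin(\overline{E}),\qquad \alpha_{\mathrm{s}}(\overline{E})=\zeta_d(\overline{E})+\zeta_1(\overline{E}^{\vee}).\]
The middle inequality becomes immediate: $\alpha_{\mathrm{s}}(\overline{E})-\alpha(\overline{E})=\zeta_1(\overline{E}^{\vee})+\widehat{\mu}_1(\overline{E}^{\vee})\ge 0$ by the left-hand inequality of Proposition \ref{propcomp0} applied to $\overline{E}^{\vee}$ with $i=1$, whence $\alpha(d)\le\alpha_{\mathrm{s}}(d)$.

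For the lower bound I plan to use the standard hermitian $\overline{K}$-vector space $\overline{E}_0=\overline{K}^d$ (euclidean norms at archimedean places, sup norms on the canonical basis at non-archimedean places). Its dual is isometric to itself, so $\pmin(\overline{E}_0)=0$ and $\alpha(\overline{E}_0)=\zeta_d(\overline{E}_0)$ by the identity above. For each $n\ge 1$ and each multi-index $\mathbf{a}$ with $|\mathbf{a}|=n$, the monomial $e^{\vee,\mathbf{a}}\in S^n(\overline{E}_0^{\vee})$ has archimedean norm $\binom{n}{\mathbf{a}}^{-1/2}$ (quotient norm from $(\overline{E}_0^{\vee})^{\otimes n}$) and norm $1$ at the non-archimedean places, so the line it generates has degree $\tfrac{1}{2}\ln\binom{n}{\mathbf{a}}$. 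Choosing $\mathbf{a}$ balanced and applying Stirling yields $\pmax(S^n\overline{E}_0^{\vee})\ge\tfrac{n\ln d}{2}+O(\ln n)$, so $\alpha(d)\ge\alpha(\overline{E}_0)\ge\ln(d)/2$.

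The main obstacle is the upper bound $\alpha_{\mathrm{s}}(d)\le H_{d-1}$. My plan is to establish, uniformly in $\overline{E}$ of rank $d$, a slope estimate
\[\pmax(S^n(\overline{E}^{\vee}))-\pmax((S^n\overline{E})^{\vee})\le nH_{d-1}+o(n),\]
and then divide by $n$ and pass to the limit. The two hermitian structures on the common $K$-vector space $S^n(E^{\vee})\cong(S^nE)^{\vee}$ are linked by the canonical isomorphism $\Psi\colon(S^n\overline{E})^{\vee}\hookrightarrow(\overline{E}^{\vee})^{\otimes n}\twoheadrightarrow S^n(\overline{E}^{\vee})$. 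At archimedean places $\Psi$ is an isometry, because the subspace of symmetric tensors in $(\overline{E}^{\vee})^{\otimes n}$ is orthogonal to the kernel of the symmetrization; at non-archimedean places it only contracts norms, sending $(e^{\mathbf{a}})^{\vee}$ to $\binom{n}{\mathbf{a}}\,e^{\vee,\mathbf{a}}$ with distortion controlled by $|\binom{n}{\mathbf{a}}|_v$. For any rank $r$ subspace $\overline{F}\subset S^n(\overline{E}^{\vee})$ and a generator $\omega$ of $\det F$, the slope difference $\widehat{\mu}(\overline{F})-\widehat{\mu}(\Psi^{-1}\overline{F})$ reduces to $\tfrac{1}{r}\sum_{v\nmid\infty}n_v\ln(\|\Psi^{-1}\omega\|_v/\|\omega\|_v)$, and the hardest part will be to bound this uniformly in $\overline{F}$ by $H_{d-1}+o(1)$ after normalization. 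The combinatorial heart, $\binom{n+d-1}{d-1}^{-1}\sum_{|\mathbf{a}|=n}\ln\binom{n}{\mathbf{a}}=nH_{d-1}+o(n)$, handles monomial subspaces directly; the passage to arbitrary subspaces requires the slope estimates of Gaudron--Rémond \cite{GR13} for symmetric powers, applied after choosing a basis adapted to the Harder--Narasimhan filtration of $(S^n\overline{E})^{\vee}$.
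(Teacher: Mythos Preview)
Your reformulation of the two defects as $\alpha(\overline{E})=\zeta_d(\overline{E})+\pmin(\overline{E})$ and $\alpha_{\mathrm{s}}(\overline{E})=\zeta_d(\overline{E})+\zeta_1(\overline{E}^{\vee})$ is correct and makes the lower and middle inequalities transparent; this matches the paper's argument exactly (the paper just writes the same chain without isolating these identities).

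The upper bound is where your proposal diverges, and it contains a genuine gap. You correctly identify the canonical isomorphism between $S^n(E^{\vee})$ and $(S^nE)^{\vee}$ as the object governing the difference, but you then propose to control the slope discrepancy subspace by subspace, first on monomial lines via the average $\binom{n+d-1}{d-1}^{-1}\sum_{|\mathbf a|=n}\ln\binom{n}{\mathbf a}$ and then for arbitrary $\overline{F}$ via a Harder--Narasimhan argument. Neither step is what is needed: the quantity $\pmax(S^n(\overline{E}^{\vee}))-\pmax((S^n\overline{E})^{\vee})$ is bounded by a \emph{supremum} over subspaces, not an average, and you give no mechanism to pass from monomial lines to general $\overline{F}$ (the HN filtration of $(S^n\overline{E})^{\vee}$ has no reason to be spanned by monomials in a global orthonormal basis, which in any case does not exist for general $\overline{E}$).

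The paper sidesteps this entirely. Writing $\theta_n\colon S^n(E^{\vee})\to(S^nE)^{\vee}$ for the canonical pairing map (so $\theta_n=n!\,\Psi^{-1}$ in your notation), one applies the general slope inequality for an isomorphism \cite[Lemma~6.4]{Gaudronpentes}:
\[
\pmax(S^n(\overline{E}^{\vee}))\le\pmax((S^n\overline{E})^{\vee})+\sum_{v\in\Sigma_K}n_v(K)\ln\|\theta_n\|_v.
\]
This single inequality handles \emph{all} subspaces at once, reducing the problem to computing the operator norms $\|\theta_n\|_v$ place by place (where a local orthonormal basis \emph{is} available). That computation is carried out in \cite[pages 583 and 589]{GR13} and gives $\sum_v n_v(K)\ln\|\theta_n\|_v=nH_{d-1}+o(n)$. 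So the ``hardest part'' you anticipate simply does not arise: replace your subspace-by-subspace plan by the operator-norm bound and cite \cite{GR13} for the resulting local computation.
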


\begin{proof} The first inequality follows from the identity $\alpha((\bQ^d, (\|.\|_{2,v})_v)) = \ln(d)/2 $ (see \cite[page 585]{GR13} for an explicit calculation).  Let $\overline{E}$ be a hermitian vector bundle of dimension $d\geq 1$ on $K$. 
 By Proposition \ref{propcomp0} and Theorem \ref{thmminslopesintro}, we have
\[\pmin(\overline{E}) = -\widehat{\mu}_1(\overline{E}^\vee) \leq \zeta_1(\overline{E}^\vee) = \lim_{n\rightarrow +\infty}\frac{\widehat{\mu}_{\min}(S^n\overline{E})}{n}.\]
Therefore, 
\begin{equation*}
\begin{split}
0\leq \alpha(\overline{E}) &  = \lim_{n \rightarrow +\infty} \frac{1}{n}\left(\widehat{\mu}_{\max}(S^n(\overline{E}^\vee)) + n\widehat{\mu}_{\min}(\overline{E})\right)\\
& \leq  \lim_{n \rightarrow +\infty} \frac{1}{n}\left(\widehat{\mu}_{\max}(S^n(\overline{E}^\vee)) + \widehat{\mu}_{\min}(S^n\overline{E})\right) = \alpha_s(\overline{E}).
\end{split}
\end{equation*}
 To give an upper bound for $\alpha_s(\overline{E})$, we follow \cite[Proof of Lemma 6.2]{GR13}. For all $n \in \bN$, we consider the isomorphism $\theta_n \colon S^n(E^\vee) \rightarrow (S^nE)^\vee$
defined as follows~: for $\varphi_1\cdots \varphi_n \in S^n(E^\vee)$ and $x_1 \cdots x_n \in S^n E$, we let
\[\theta_n(\varphi_1\cdots \varphi_n)(x_1\cdots x_n) = \sum_{\sigma \in \mathfrak{S}_n}\prod_{i=1}^n \varphi_i(x_{\sigma(i)}),\]
where $\mathfrak{S}_n$ denotes the symmetric group of $\{1, \ldots, n\}$. By \cite[Lemma 6.4]{Gaudronpentes}, we have 
\[\widehat{\mu}_{\max}(S^n(\overline{E}^\vee)) \leq \widehat{\mu}_{\max}((S^n\overline{E})^\vee) + \sum_{v \in \Sigma_K}n_v(K) \ln\|\theta_n\|_v,\]
where $\|\theta_n\|_v$ denotes the operator norm of the induced map $\theta_n \colon S^n(E^\vee) \otimes_K \bC_v \rightarrow (S^nE)^\vee\otimes_K \bC_v$ for each $v \in \Sigma_K$. By \cite[pages 583 and 589]{GR13}, the sum on the right hand side is of the form $nH_{d-1} + o(n)$. The result follows.
\end{proof}

\subsection{Comparison of slopes and minima}
 Let $\overline{E}$ be a hermitian adelic vector space on $K$ of dimension $d \geq 1$. As an application of Theorem \ref{thmminslopesintro}, we now give upper bounds for the quantities $\widehat{\mu}_i(\overline{E}) + \zeta_i(\overline{E})$ in terms of symmetry defects. 
\begin{coro}\label{corocompmin} For any $ i \in \{1,\ldots, d\}$, we have
\[\widehat{\mu}_i(\overline{E}) + \zeta_i(\overline{E})  \leq \alpha(d) \leq H_{d-1}.\]
\end{coro}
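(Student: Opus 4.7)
The plan is to reduce to the case $i = d$ via Gaudron's bound (Proposition \ref{propcomp0}), then identify $\widehat{\mu}_d(\overline{E}) + \zeta_d(\overline{E})$ with the symmetry defect $\alpha(\overline{E})$ using Theorem \ref{thmminslopesintro} and the duality formula for slopes, and finally invoke Proposition \ref{propsymdefect}.

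More precisely, I would proceed as follows. By Proposition \ref{propcomp0} applied to $\overline{E}$, we have for any $i \in \{1,\ldots,d\}$
\[\widehat{\mu}_i(\overline{E}) + \zeta_i(\overline{E}) \leq \sup_{\dim E' = d}\bigl(\widehat{\mu}_d(\overline{E}') + \zeta_d(\overline{E}')\bigr),\]
where the supremum runs over $\overline{K}$-hermitian vector spaces of dimension $d$. Hence it suffices to show that for every such $\overline{E}'$ one has $\widehat{\mu}_d(\overline{E}') + \zeta_d(\overline{E}') \leq \alpha(d)$.

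Fix such $\overline{E}'$ of dimension $d$. By Proposition \ref{propmui}\eqref{propmui2}, $\widehat{\mu}_d(\overline{E}') = -\widehat{\mu}_1(\overline{E}'^\vee) = -\pmax(\overline{E}'^\vee)$. On the other hand, Theorem \ref{thmminslopesintro} gives
\[\zeta_d(\overline{E}') = \lim_{n \to +\infty} \frac{\pmax(S^n(\overline{E}'^\vee))}{n}.\]
Combining these two identities,
\[\widehat{\mu}_d(\overline{E}') + \zeta_d(\overline{E}') = \lim_{n \to +\infty} \frac{\pmax(S^n(\overline{E}'^\vee)) - n\,\pmax(\overline{E}'^\vee)}{n} = \alpha(\overline{E}'),\]
where the last equality is the very definition of the symmetry defect (Definition \ref{defsymdefect}). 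Taking the supremum over $\overline{E}'$ of dimension $\leq d$ yields $\widehat{\mu}_d(\overline{E}') + \zeta_d(\overline{E}') \leq \alpha(d)$, which establishes the first inequality of the corollary.

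The bound $\alpha(d) \leq H_{d-1}$ is then immediate from Proposition \ref{propsymdefect}. No step is truly the ``main obstacle'' since all the heavy lifting has already been done: the crux is the identification $\widehat{\mu}_d + \zeta_d = \alpha$, which is essentially a bookkeeping consequence of Theorem \ref{thmminslopesintro} combined with slope-duality, and the rest is a direct invocation of previously established inequalities.
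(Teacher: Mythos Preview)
Your proof is correct and follows essentially the same approach as the paper: reduce to $i=d$ via Proposition \ref{propcomp0}, then identify $\widehat{\mu}_d(\overline{E}')+\zeta_d(\overline{E}')$ with $\alpha(\overline{E}')$ using Theorem \ref{thmminslopesintro} and slope duality, and conclude with Proposition \ref{propsymdefect}. The paper's version is simply more condensed, writing the identification directly for $\overline{E}$ without spelling out the supremum over $\overline{E}'$.
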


\begin{proof}  It is enough to consider the case $i=d$ by Proposition \ref{propcomp0}, and we have
\[\widehat{\mu}_d(\overline{E}) + \zeta_d(\overline{E}) = \widehat{\mu}_d(\overline{E}) + \lim_{n \rightarrow +\infty} \frac{\pmax(S^n(\overline{E}^\vee))}{n} = \alpha(\overline{E})\leq \alpha(d) \leq H_{d-1}\]
by Theorem \ref{thmminslopesintro} and Proposition \ref{propsymdefect}.
\end{proof}

\subsection{An absolute transference theorem}

 Let $\overline{E}$ be a hermitian $K$-vector space of dimension $d \geq 1$. The following theorem gives upper bounds for the transference problem in terms of symmetry defects. By Proposition \ref{propsymdefect}, it implies Theorem \ref{thmtransferenceintro} in the introduction. 

\begin{theorem}\label{thmtransference}
For any $i \in\{1, \ldots d\}$, we have 
 \[\zeta_i(\overline{E}) + \zeta_{d-i+1}(\overline{E}^\vee) \leq \min\{2\alpha(d),\alpha_{\mathrm{s}}(i) + \alpha_{\mathrm{s}}(d-i+1) \}.\]
\end{theorem}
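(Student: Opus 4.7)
The bound decomposes into two inequalities, handled separately.

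The inequality $\zeta_i(\overline{E}) + \zeta_{d-i+1}(\overline{E}^\vee) \leq 2\alpha(d)$ follows directly from Corollary~\ref{corocompmin}. Apply it to $\overline{E}$ at index $i$ and to $\overline{E}^\vee$ at index $d-i+1$:
\[
\widehat{\mu}_i(\overline{E}) + \zeta_i(\overline{E}) \leq \alpha(d),
\qquad
\widehat{\mu}_{d-i+1}(\overline{E}^\vee) + \zeta_{d-i+1}(\overline{E}^\vee) \leq \alpha(d).
\]
Summing these two and invoking Proposition~\ref{propmui}\eqref{propmui2}, which gives $\widehat{\mu}_i(\overline{E}) + \widehat{\mu}_{d-i+1}(\overline{E}^\vee) = 0$, the slope contributions cancel and the claimed bound follows.

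For the inequality $\zeta_i(\overline{E}) + \zeta_{d-i+1}(\overline{E}^\vee) \leq \alpha_{\mathrm{s}}(i) + \alpha_{\mathrm{s}}(d-i+1)$, the key input is the identity
\[
\alpha_{\mathrm{s}}(\overline{F}) = \zeta_{\dim F}(\overline{F}) + \zeta_1(\overline{F}^\vee),
\]
valid for every hermitian vector space $\overline{F}$. It is obtained by applying Theorem~\ref{thmminslopesintro} both to $\overline{F}$ and to $\overline{F}^\vee$: the first half gives $\lim_n \pmax(S^n\overline{F}^\vee)/n = \zeta_{\dim F}(\overline{F})$, while the second half applied to $\overline{F}^\vee$ yields $\lim_n \pmin(S^n\overline{F})/n = \zeta_1(\overline{F}^\vee)$. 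Combined with the subspace-monotonicity $\zeta_i(\overline{E}) \leq \zeta_i(\overline{F}_1)$ for any hermitian subspace $\overline{F}_1 \subset \overline{E}_{\overline{K}}$ of dimension $i$, one obtains
\[
\zeta_i(\overline{E}) + \zeta_1(\overline{F}_1^\vee) \leq \alpha_{\mathrm{s}}(\overline{F}_1) \leq \alpha_{\mathrm{s}}(i),
\]
and likewise, for any subspace $\overline{F}_2 \subset \overline{E}^\vee_{\overline{K}}$ of dimension $d-i+1$,
\[
\zeta_{d-i+1}(\overline{E}^\vee) + \zeta_1(\overline{F}_2^\vee) \leq \alpha_{\mathrm{s}}(d-i+1).
\]
Summing these reduces the theorem to exhibiting a pair $(\overline{F}_1,\overline{F}_2)$ of the prescribed dimensions for which $\zeta_1(\overline{F}_1^\vee) + \zeta_1(\overline{F}_2^\vee) \geq 0$.

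The principal obstacle is this last step. As $\dim F_1 + \dim F_2 = d+1 > d$, the restriction to $F_1 \otimes F_2$ of the canonical pairing $E \otimes E^\vee \to K$ cannot be identically zero. Choosing near-minimizers $\bar{\psi} \in \overline{F}_1^\vee \simeq E^\vee/F_1^\perp$ and $\bar{v} \in \overline{F}_2^\vee \simeq E/F_2^\perp$, together with lifts $\tilde{\psi} \in E^\vee$ and $\tilde{v} \in E$ whose heights approximate the quotient heights arbitrarily well, and arranging by a genericity/perturbation argument within the near-minimal level sets that $\tilde{\psi}(\tilde{v}) \neq 0$, the product formula applied to $\tilde{\psi}(\tilde{v}) \in K^\times$ yields $h(\tilde{\psi}) + h(\tilde{v}) \geq 0$. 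Hence $\zeta_1(\overline{F}_1^\vee) + \zeta_1(\overline{F}_2^\vee) \geq -\varepsilon$ for any $\varepsilon > 0$, and a limit argument concludes.
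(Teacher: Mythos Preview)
Your proof of the first bound $\zeta_i(\overline{E}) + \zeta_{d-i+1}(\overline{E}^\vee) \leq 2\alpha(d)$ is correct and identical to the paper's. For the second bound, your identity $\alpha_{\mathrm{s}}(\overline{F}) = \zeta_{\dim F}(\overline{F}) + \zeta_1(\overline{F}^\vee)$ and the subspace monotonicity $\zeta_i(\overline{E}) \leq \zeta_i(\overline{F}_1)$ are also the ingredients used in the paper, and your reduction to exhibiting a pair $(F_1,F_2)$ with $\zeta_1(\overline{F}_1^\vee) + \zeta_1(\overline{F}_2^\vee) \geq 0$ is valid.

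The gap is in your argument for this last claim. You assert that a lift $\tilde{\psi} \in E^\vee_{\overline K}$ of a near-minimizer $\bar{\psi} \in F_1^\vee \simeq E^\vee/F_1^\perp$ can be chosen with $h_{\overline{E}^\vee}(\tilde{\psi})$ arbitrarily close to the quotient height $h_{\overline{F}_1^\vee}(\bar{\psi})$. This is not justified: at each place one can minimize the norm of a lift separately, but the local minimizers differ by distinct elements of $F_1^\perp$, and there is no reason a single global lift realizes all local infima simultaneously. The discrepancy between the best global lift and the quotient height is itself a transference-type quantity, so you are in danger of circularity. The ``genericity/perturbation'' step ensuring $\tilde{\psi}(\tilde{v}) \neq 0$ is also left imprecise.

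The paper sidesteps this entirely by working with the degree invariants $\sigma_j(\overline{E}) = -\sup\{\widehat{\deg}(\overline{F}) : F \subset E_{\overline{K}},\ \dim F = j\}$, for which $\sigma_1 = \zeta_1$ and, crucially, the Roy--Thunder duality $\sigma_j(\overline{E}) = \sigma_{d-j}(\overline{E}^\vee) - \widehat{\deg}(\overline{E})$ holds. Taking $G \subset E_{\overline K}$ of dimension $i$ with $-\widehat{\deg}(\overline G)$ within $\varepsilon$ of $\sigma_i(\overline E)$, one gets
\[
\zeta_i(\overline{E}) + \sigma_{i-1}(\overline{E}) \leq \alpha_{\mathrm{s}}(i) + \sigma_i(\overline{E}) + \varepsilon = \alpha_{\mathrm{s}}(i) + \sigma_{d-i}(\overline{E}^\vee) - \widehat{\deg}(\overline{E}) + \varepsilon,
\]
and a symmetric inequality with $E$ and $E^\vee$ swapped. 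Adding the two, all $\sigma$- and $\widehat{\deg}$-terms cancel. Unwinding, this is precisely the choice $F_1 = G$ a degree-maximizing $i$-dimensional subspace (and $F_2$ analogously in $E^\vee$); for \emph{this specific} pair the duality of $\sigma_j$ forces your inequality $\zeta_1(\overline{F}_1^\vee) + \zeta_1(\overline{F}_2^\vee) \geq -2\varepsilon$ without any lifting or pairing argument.
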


\begin{proof}  Let $i \in \{1, \ldots d\}$. By Proposition \ref{propmui} and Corollary \ref{corocompmin}, we have
\[\zeta_i(\overline{E}) + \zeta_{d-i+1}(\overline{E}^\vee) \leq - \widehat{\mu}_{i}(\overline{E}) - \widehat{\mu}_{d-i+1}(\overline{E}^\vee)  + 2\alpha(d) = 2\alpha(d).\]
It only remains to prove that 
\begin{equation}\label{ineqtransf0}
\zeta_i(\overline{E}) + \zeta_{d-i+1}(\overline{E}^\vee) \leq \alpha_{\mathrm{s}}(i) + \alpha_{\mathrm{s}}(d-i+1).
\end{equation}
For every $j \in\{0, \ldots d\}$, we consider the real number \[\sigma_j(\overline{E}) =-\sup \{\widehat{\deg}(\overline{F}) \ | \ F \subset E_{\overline{K}}, \ \dim F = j\}.\]
Then $\sigma_1(\overline{E}) = \zeta_1(\overline{E})$ and
$\sigma_j(\overline{E}) = \sigma_{d-j}(\overline{E}^\vee) - \widehat{\deg}(\overline{E})$ (see \cite[Theorem 1.1]{RoyThunder}). Let $\varepsilon > 0$ be a real number and let $G \subset E_{\overline{K}}$ be a vector subspace of dimension $i$ such that $-\widehat{\deg}(\overline{G}) \leq \sigma_i(\overline{E}) + \varepsilon$. We have 
\[\zeta_i(\overline{E})+\sigma_{i-1}(\overline{E}) \leq \zeta_i(\overline{G}) + \sigma_{i-1}(\overline{G})=\zeta_i(\overline{G}) + \zeta_{1}(\overline{G}^\vee)-\widehat{\deg}(\overline{G}).\]
Moreover,  Theorem \ref{thmminslopesintro} implies 
\begin{equation*}
\zeta_1(\overline{G}^\vee) + \zeta_{i}(\overline{G}) = \lim_{n \rightarrow +\infty}\frac{1}{n}(\pmin(S^n\overline{G}) + \pmax(S^n (\overline{G}^\vee))) = \alpha_s(\overline{G}) \leq \alpha_s(i),
\end{equation*}
so that 
\begin{equation*}
\begin{split}
\zeta_i(\overline{E})+\sigma_{i-1}(\overline{E}) \leq \alpha_s(i)-\widehat{\deg}(\overline{G}) &\leq \alpha_s(i)+\sigma_i(\overline{E}) + \varepsilon\\
 & = \alpha_s(i) +\sigma_{d-i}(\overline{E}^\vee)-\deg(\overline{E})+\varepsilon.
\end{split}
\end{equation*}
Similarly, we have
\begin{equation*}
\begin{split}
\zeta_{d+1-i}(\overline{E}^\vee)+\sigma_{d-i}(\overline{E}^\vee) &\leq \alpha_s(d-i+1) + \sigma_{i-1}(\overline{E}) -\deg(\overline{E}^\vee)+ \varepsilon\\
& = \alpha_s(d-i+1) + \sigma_{i-1}(\overline{E}) +\deg(\overline{E})+ \varepsilon.
\end{split}
\end{equation*}
Summing up, we find $\zeta_i(\overline{E}) + \zeta_{d+1-i}(\overline{E}^\vee) \leq \alpha_s(i) + \alpha_s(d-i+1) +2\varepsilon$, and we conclude by letting $\varepsilon$ tend to zero.

\end{proof}

\subsection{The function field case}\label{paragcdf} We assume now that $K=k(C_K)$ is the function field of a regular projective integral curve over a field $k$ and we let $\overline{E}$ be a hermitian $K$-vector space of dimension $d\geq1$. Consider the vector bundle $\mathcal{E}$ on $C_K$ defined by
\[\mathcal{E}(U) = \{x \in E \ | \ \|x\|_v \leq 1 \ \forall v \in U\}\]
for any non-empty open subset $U$ of $C_K$. By construction, we have 
\[\pmax(\overline{E}) = \sup_{\varphi \colon C' \rightarrow C_K} \sup\{\deg (\mathcal{F})/\rk(\mathcal{F}) \ | \ 0\ne \mathcal{F} \subset \varphi^* \mathcal{E} \text{ subbundle}\},\]
where $\varphi$ runs over all finite surjective morphisms of regular projective integral curves $\varphi \colon C' \rightarrow C_K$ on $k$. It follows from \cite[Theorem 7.2]{Moriwaki98} that $\pmax(S^n\overline{E}) = n\pmax(\overline{E})$ for any $n\geq 1$. Arguing as in paragraph \ref{sectionproofminslopes}, we find that $\zeta_{d}(\overline{E}) = \pmax(\overline{E}^\vee) = -\widehat{\mu}_d(\overline{E})$ (one can replace \cite[Theorem 1.1]{Chencompare} by \cite[Theorem 5.20]{Gaudronpentes}, which holds for function fields in arbitrary characteristic). By Proposition  \ref{propcomp0}, we have $\zeta_i(\overline{E})=-\widehat{\mu}_i(\overline{E})$ for all $1\leq i  \leq d$.

\bibliographystyle{plain}
\bibliography{EssminNF.bib}

\end{document}